\documentclass[11pt,a4paper]{amsart}
\usepackage[T1]{fontenc}
\usepackage{lmodern}
\usepackage{amsmath,amssymb,mathtools}
\usepackage{microtype,booktabs,array,enumitem}
\usepackage[a4paper,textwidth=155mm,textheight=235mm,centering,headheight=14pt]{geometry}
\usepackage[hidelinks]{hyperref}
\hypersetup{pdftitle={Large automorphism groups of curves of zero p-rank in odd characteristic},
 pdfauthor={Saeed Tafazolian},
 pdfsubject={Automorphism groups, ramification, and generalized Suzuki curves},
 pdfkeywords={Algebraic curves; automorphism groups; p-rank; generalized Suzuki curve}}
\numberwithin{equation}{section}
\newtheorem{theorem}{Theorem}[section]
\newtheorem{proposition}[theorem]{Proposition}
\newtheorem{lemma}[theorem]{Lemma}
\newtheorem{corollary}[theorem]{Corollary}
\theoremstyle{remark}
\newtheorem{remark}[theorem]{Remark}
\newcommand{\Aut}{\operatorname{Aut}}
\newcommand{\Syl}{\operatorname{Syl}}
\newcommand{\PSL}{\operatorname{PSL}}
\newcommand{\PGL}{\operatorname{PGL}}
\newcommand{\PSU}{\operatorname{PSU}}
\newcommand{\PGU}{\operatorname{PGU}}
\newcommand{\Ree}{\operatorname{Ree}}
\newcommand{\Sz}{\operatorname{Sz}}
\newcommand{\F}{\mathbb F}
\newcommand{\PP}{\mathbb P}
\newcommand{\cX}{\mathcal X}
\newcommand{\cY}{\mathcal Y}

\title[Automorphisms of zero $p$-rank curves]
{Large automorphism groups of curves of zero $p$-rank\\
in odd characteristic}
\author{Saeed Tafazolian}
\address{IMECC, Universidade Estadual de Campinas (UNICAMP),
Campinas, SP, Brazil}
\email{saeed@unicamp.br}
\subjclass[2020]{Primary 14H37; Secondary 14G17, 20B25}
\keywords{Algebraic curve, automorphism group, $p$-rank, ramification,
generalized Suzuki curve}
\date{}
\begin{document}
\begin{abstract}
Let $\cX$ be a curve of genus $g\ge2$ and zero $p$-rank over an
algebraically closed field of odd characteristic $p$.
We classify the pairs $(\cX,G)$ for which $G\le\Aut(\cX)$ has no common
fixed point and $|G|>24g(g-1)$. For $g\ge4$, the curves are explicit
cyclic covers of the projective line, the Hermitian curve, or the Ree
curve. We determine their full automorphism groups and the possible
subgroups $G$, including the central extensions. The exceptional
$A_7$ action in characteristic $5$ determines the Hermitian curve of
degree $6$. The cases of genus $2$ and $3$ are treated separately.
These results give an odd-characteristic counterpart to the
classification of large automorphism groups of zero $2$-rank curves.
As an application, we obtain a new characterization of the generalized
Suzuki curve in which the fixed-point hypothesis is no longer required.
\end{abstract}
\maketitle

\section{Introduction}\label{sec:introduction}

Let $K$ be an algebraically closed field of characteristic $p$, and let
$\cX/K$ be a smooth projective irreducible curve of genus $g\ge2$.
The group $\Aut(\cX)$ of $K$-automorphisms is finite. In characteristic
zero its order is bounded by $84(g-1)$, whereas in positive
characteristic this bound can fail because of wild ramification.
A basic question is to determine the curves on which a group much
larger than the Hurwitz bound can act; see \cite[Chapter~11]{HKT}.

The $p$-rank is particularly useful in this problem. It is the integer
$\gamma$ for which the Jacobian of $\cX$ has $p^\gamma$ geometric
$p$-torsion points. When $\gamma=0$, every nontrivial $p$-subgroup of
$\Aut(\cX)$ fixes exactly one point and acts freely elsewhere.
Consequently, the fixed points of the Sylow $p$-subgroups form a
single orbit. This connects the ramification of the action with the
permutation group induced on that orbit.

In characteristic $2$, Giulietti and Korchm\'aros
\cite[Theorems~1.1, 6.2 and~7.1]{GKzero} used this observation to
determine the possible large automorphism groups of curves of zero
$2$-rank and the corresponding genera. In particular, above the
bound $24g(g-1)$, a group without a common fixed point belongs to
the exceptional nonsolvable cases in their classification.
The subgroup generated by its $2$-elements is isomorphic to
\[
 \PSL(2,q),\qquad \PSU(3,q),\qquad
 \operatorname{SU}_3(q),\qquad \Sz(q),
\]
with $q$ a power of $2$. The structure of the whole group, including
the possible projective unitary extension, is given in
\cite[Theorem~6.2]{GKzero}. Their result is a classification of groups
and genera; a geometric identification is also obtained in several
of the extremal cases.

For positive $p$-rank, related bounds under a rationality assumption
on a point-stabilizer quotient have been obtained by Giulietti,
Korchm\'aros and Timpanella \cite{GKTpositive}.

The purpose of this paper is to study the zero $p$-rank question in odd
characteristic. The linear and unitary cases remain, while Ree
groups occur in characteristic $3$. Cyclic Sylow subgroups require
a separate argument, and an alternating group occurs in
characteristic $5$. We determine the equations of the curves as
well as the groups acting on them. The distinction between a
subgroup $G$ and the full automorphism group is essential: for
example, the exceptional $A_7$ is a proper subgroup of the full
group of the Hermitian curve of degree $6$.

Our main result is the following. We write $H_q$ for the Hermitian
curve $z^q+z=y^{q+1}$ and $\Ree(q)$ for the Ree group
${}^2G_2(q)$.

\begin{theorem}[Geometric classification]\label{thm:geometric}
Let $K$ be an algebraically closed field of characteristic $p>2$.
Let $\cX/K$ be a curve of genus $g\ge4$ and $p$-rank zero, and let
$G\le\Aut(\cX)$ satisfy $|G|>24g(g-1)$. Suppose that $G$ has no common
fixed point on $\cX$. Then $\cX$ is $K$-isomorphic to precisely one of
the following curves. Here $q$ is a power of $p$, and every displayed
model denotes its smooth projective normalization.
\begin{enumerate}[label=\textup{(\roman*)},leftmargin=*]
\item The linear family
\[
 L(q,d):\quad y^q-y=x^d,\qquad 2\le d<q,\quad d\mid q+1,
 \qquad g=\frac{(q-1)(d-1)}2.
\]
Its full automorphism group has order $dq(q^2-1)$ and is the central lift
$A_L(q,d)$ in \eqref{eq:lineargroup}.
\item The unitary family
\[
 U(q,d):\quad y^{q^2}-y=x^d,\qquad z^q+z=y^{q+1},
 \qquad d\mid q^2-q+1,
\]
where $q\ge3$, $d\ge1$, and
\[
 2g=(q-1)\bigl(d(q+1)^2-(q^2+q+1)\bigr).
\]
Its full group is the central lift $A_U(q,d)$ in
\eqref{eq:unitarygroup}, of order $dq^3(q^3+1)(q^2-1)$.
For $d=1$ the model is the Hermitian curve $H_q$.
\item The Ree family, with $p=3$, $q=3q_0^2\ge27$, $q_0=3^a$:
\[
 R(q,d):\quad
 v^q-v=x^d,\quad
 y^q-y=v^{q_0}(v^q-v),\quad
 z^q-z=v^{2q_0}(v^q-v),
 \qquad d\mid q-3q_0+1,
\]
where $d\ge1$, and
\[
 2g=(q-1)\bigl(d(q+1)(q+3q_0+1)-(q^2+q+1)\bigr).
\]
Its full group is $C_d\times\Ree(q)$, of order
$dq^3(q^3+1)(q-1)$.
\end{enumerate}
The subgroups $G$ are those described in Theorem~\ref{thm:subgroups},
together with $A_7\le\Aut(H_5)$ in characteristic $5$.
Every model above has zero $p$-rank. In each case the inequality
$|G|>24g(g-1)$ is to be imposed on the group under consideration.
In particular, a model belongs to the classification if and only if
its full automorphism group satisfies this inequality.
\end{theorem}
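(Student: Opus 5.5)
The proof follows the Giulietti--Korchm\'aros strategy for zero $2$-rank, adapted to odd $p$, and ends with a geometric identification via quotient curves. The steps are: group structure, doubly transitive identification, ramification count, curve identification, and the converse.

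\emph{Step 1 (the key group-theoretic input).} Since $\gamma=0$, every nontrivial $p$-subgroup of $G$ fixes a unique point and acts freely elsewhere. I first show that $p$ divides $|G|$. If $p\nmid|G|$, the classical Hurwitz-type bound for tame actions, $|G|\le 84(g-1)$, contradicts $|G|>24g(g-1)$ once $g\ge4$. Let $\Omega$ be the set of points fixed by some Sylow $p$-subgroup. Then $\Omega$ is a single $G$-orbit, and $|\Omega|>1$ because $G$ has no common fixed point. The stabilizer $G_P$ of $P\in\Omega$ is $S_P\rtimes C$, with $S_P$ the normal Sylow $p$-subgroup and $C$ cyclic of order prime to $p$. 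Each $S_Q$ ($Q\ne P$) acts semiregularly on $\Omega\setminus\{Q\}$. Combining Riemann--Hurwitz for $G_P$ with the bound $|G|>24g(g-1)$, I show $|\Omega|$ is small compared with $|S_P|$. From this, $G$ acts $2$-transitively on $\Omega$, and the one-point stabilizer has a normal subgroup regular on the remaining points. This is exactly the configuration covered by Kantor--O'Nan--Seitz / Hering. So the subgroup $N$ generated by the $p$-elements is $\PSL(2,q)$, $\PSU(3,q)$ or $\operatorname{SU}_3(q)$, or $\Ree(q)$ (with $p=3$), together with the small exceptional cases. These include cyclic Sylow $p$-subgroups, which I handle via the Burnside transfer and Brauer--Suzuki-type arguments, and $A_7$ acting on $15$ points with $p=5$.

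\emph{Step 2 (ramification data).} For each $N$, I compute the ramification of $\cX\to\cX/N$. Only the orbit $\Omega$ is wildly ramified, and the higher ramification groups are fixed by the known filtration of the Sylow subgroup. A genus count then shows $\cX/N\cong\PP^1$, with at most one further short orbit of tame ramification, of index $d$. Comparing with the Deligne--Lusztig curves, I identify $\cX/C_d$ with the associated curve: the rational curve $y^q-y=x$ quotient picture for $\PSL$, $H_q$ for the unitary case, and the Ree curve. Here $C_d$ is the cyclic group of order $d$ centralizing $N$. Then $\cX$ is a cyclic $C_d$-cover totally ramified exactly over the tame short orbit of size $q+1$, $q^3+1$, or the Ree orbit. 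This forces $d\mid q+1$, $d\mid q^2-q+1$, or $d\mid q-3q_0+1$, respectively. By Kummer theory the cover is the displayed model, which gives the genus formulas via Riemann--Hurwitz.

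\emph{Step 3 (converse and full groups).} For the converse, I check the following for each model. It has zero $p$-rank, by Deuring--Shafarevich: each Sylow $p$-subgroup of the full group fixes one point and the quotient is rational. Its full group is the central lift described. I argue this by applying Step 1 to $\Aut(\cX)$ itself and using maximality of the known lifts. Uniqueness of the isomorphism class follows from the invariants $(q,d)$, which can be read off from $g$ and $|\Aut(\cX)|$. The subgroup list is then Theorem~\ref{thm:subgroups}, plus $A_7\le\PGU(3,5)$ acting on $H_5$. I check that this $A_7$ satisfies $2520>24\cdot10\cdot9$.

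\emph{Main obstacle.} I expect Step 1 to be the hardest part: proving $2$-transitivity with a regular normal subgroup from the numerical bound alone. The delicate subcase is cyclic (or small) Sylow $p$-subgroups, where the orbit count is not automatically sharp. There I would first try bounding $|C|\le 4g+2$ and using the tame Hurwitz inequality on the quotient by $S_P$ to squeeze $|\Omega|$.
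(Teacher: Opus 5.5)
Your proposal has a genuine gap in each of the two places where the real work is done.

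\textbf{Step 1.} You claim that the order bound makes $|\Omega|$ small relative to $|S|$, and hence forces a doubly transitive action on $\Omega$ whose point stabilizer has a normal subgroup regular on the other points. This is false, and the theorem itself contains a counterexample. For $A_7\le\Aut(H_5)$ with $p=5$ and $g=10$, one has $S\cong C_5$ and $G_P=N_G(S)\cong C_5\rtimes C_4$. So $|\Omega|=126$, not $s+1=6$, and $A_7$ is not doubly transitive on $126$ points, since $126\cdot125>2520$.

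The action of $A_7$ on $15$ points that you cite cannot be the Sylow-fixed-point orbit. A $5$-element has no fixed point there. Moreover, the stabilizer $\PSL(2,7)$ has no normal subgroup regular on the remaining $14$ points, so this action is not even in the Hering--Kantor--Seitz list you would invoke.

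Hence, when $S$ is cyclic, no permutation-group reduction is available, and the paper argues locally instead:
\begin{enumerate}[label=\textup{(\alph*)},leftmargin=*]
\item The Obus--Pries inequality $u_2\ge pu_1$ excludes $C_{p^a}$ with $a\ge2$.
\item For $S=C_p$, use the relation $|G|/(2g-2)=pme/(e(J-1)-pm)$, with $m\mid j(p-1)$ and $m\le4h+2$, where $h=g(\cX/S)$. For $h\ge1$ this leaves only $(p,h,j,m,e)=(5,2,1,4,7)$. Then $|G|=2520$, $G\cong A_7$ by perfectness, and $\cX\cong H_5$ via the degree-$7$ map $u=z^5(z-1)^2$.
\item For $h=0$, Stichtenoth's one-point theorem gives $y^p-y=x^d$.
\end{enumerate}
When $S$ is noncyclic, $n=s+1$ and the rank-one action come from the Guralnick--Malmskog--Pries theorem (a TI Sylow subgroup with cyclic $N_G(S)/S$). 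They do not follow from Riemann--Hurwitz numerics.

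\textbf{Step 2.} This is where the curve is actually determined, and you assert it rather than prove it. The route you sketch is also not available from $G$, because the cyclic group $C_d$ you quotient by need not lie in $G$: the central kernel of $G$ is $C_r$ with $d=rt$. For example, for $t$ odd the index-$t$ subgroup of $A_L(q,t)$ is a copy of $\PGL(2,q)$ acting on $L(q,t)$ with $r=1$; $q=125$, $t=3$ satisfies the order bound. Then $\cX/M=\cX$ and there is no Deligne--Lusztig quotient to compare with. Exhibiting the extra $C_t\le\Aut(\cX)$ is exactly what has to be proved. Note also that the Kummer cover is totally ramified over the wild orbit $\Omega$, not over the tame orbit $\Delta$, whose stabilizer has order $e$, not $d$.

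The missing idea is the equivariant rigidity of the one-point Sylow cover:
\begin{enumerate}[label=\textup{(\alph*)},leftmargin=*]
\item Show $\cX/S\cong\PP^1$.
\item Read off the lower jumps from the genus formula and Hasse--Arf. With $D=rt$ they are $D$; then $D,D(q+1)$; then $D,D(3q_0+1),D(q+3q_0+1)$ in the linear, unitary and Ree cases.
\item Use the tame torus weights to remove all lower-degree terms from the Artin--Schreier equation of the first elementary-abelian quotient.
\item Use an upper-jump bound to exclude central twists in the Heisenberg and Ree extensions.
\end{enumerate}
This argument also gives uniqueness of the curve for each $(q,d)$. Reading $(q,d)$ off from $g$ and $|\Aut(\cX)|$ only separates different parameters.

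Finally, in the unitary and Ree cases, total ramification over $\Omega$ only yields $d\mid q^3+1$. The sharper conditions $d\mid q^2-q+1$ and $d\mid q-3q_0+1$ need a further argument. One such argument uses the exact order of the class $[P_0-P_\infty]$ on the base curve, together with a central lift of an automorphism interchanging $P_\infty$ and $P_0$.
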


The cases $g=2,3$ are listed in Theorem~\ref{thm:small}. Together with
Theorem~\ref{thm:subgroups}, this gives the classification for every
$g\ge2$. Equivalently, outside the listed pairs, a group satisfying
$|G|>24g(g-1)$ has a common fixed point.

The proof begins with a ramification argument. An action without a
common fixed point has exactly two short orbits, one wild and one
tame, as soon as
\[
 |G|>\max\{84(g-1),8(g-1)^2\}.
\]
For a noncyclic Sylow subgroup, the theorem of Guralnick, Malmskog
and Pries \cite[Theorem~3.16]{GMP} then gives an almost simple
quotient. Its natural action has degree $|S|+1$. We prove that
$\cX/S$ is rational and that the kernel on the wild orbit is central
and cyclic. Comparison of the two quotient maps gives the genus
formula. To recover the curve from these data, we use the tame
action on the ramification quotients and compare the resulting
Artin--Schreier extensions. Bounds on their upper jumps exclude
additional central twists.

The curves in the conclusion include familiar covers. The unitary
family consists of cyclic quotients of the Giulietti--Korchm\'aros
cover, while the Ree family consists of cyclic quotients of the
cover constructed by Skabelund; see \cite{GMP,Skabelund,GMQZ}.
Here the issue is the converse: the order bound and zero $p$-rank
force these equations and the specified central extensions. No
assumption of maximality over a finite field is made.

A further application concerns the generalized Suzuki curve
introduced by Borges and Coutinho \cite{BC}. For
$q_0=p^t$ and $q=p^{2t-1}$, its affine equation is
\[
 y^q-y=x^{q_0}(x^q-x).
\]
For $t\ge2$, the characterization in \cite[Theorem~1.1]{Taf} assumes that the
automorphism group fixes an $\F_q$-rational point. We show that its
genus and group order already imply the existence and uniqueness
of that point. Theorem~\ref{thm:suzuki-new} gives the resulting
characterization, retaining the group and field-of-definition
hypotheses of \cite{Taf}, but not its fixed-point assumption.
This application follows from a numerical criterion which does
not require zero $p$-rank in advance.

Section~\ref{sec:prelim} collects the preliminary results.
Sections~\ref{sec:two}--\ref{sec:cyclic} determine the orbit and
group structure. The equivariant covering arguments are established
in Section~\ref{sec:rigidity} and applied in
Section~\ref{sec:equations}. The full groups and their large
subgroups are determined in Section~\ref{sec:fullgroups}.
Section~\ref{sec:smallgenera} treats the two small genera, and
Section~\ref{sec:numerical} gives the application to the generalized
Suzuki curve.

\section{Preliminary results}\label{sec:prelim}

Our notation for curves and ramification follows \cite{HKT,SerreLocal}.
Unless a finite ground field is specified, all curves and all
automorphisms are over an algebraically closed field $K$ of
characteristic $p>2$.

\subsection{Quotients and ramification}
For a finite group $A\le\Aut(\cX)$, the quotient $\cX/A$ is the
smooth projective curve with function field $K(\cX)^A$. Its genus
is denoted by $g(\cX/A)$ and its $p$-rank by $\gamma(\cX/A)$.
For $P\in\cX$, let $A_P$ be the stabilizer of $P$. The orbit of
$P$ is short if $A_P\ne1$, and is wild if $p\mid|A_P|$.
An action is semiregular on a set if every point stabilizer is
trivial. A common fixed point is a point fixed by every element
of the group.

For a local parameter $u$ at $P$, the lower ramification groups are
\[
 (A_P)_i=\{\sigma\in A_P:v_P(\sigma(u)-u)\ge i+1\},
 \qquad i\ge0.
\]
The group $(A_P)_0=A_P$ has a unique Sylow $p$-subgroup
$(A_P)_1$, and $A_P/(A_P)_1$ is cyclic of order prime to $p$.
Moreover,
\[
 A_P=(A_P)_1\rtimes C_m,
 \qquad (m,p)=1,
\]
and the quotients $(A_P)_i/(A_P)_{i+1}$ are elementary abelian for
$i\ge1$. The tangent character embeds $C_m$ into $K^\times$.
For a subgroup $B\le A_P$, lower ramification groups are obtained
by intersection: $B_i=B\cap(A_P)_i$.

The Riemann--Hurwitz and Hilbert different formulas give
\begin{equation}\label{eq:RHprelim}
 2g(\cX)-2=|A|\bigl(2g(\cX/A)-2\bigr)
          +\sum_{P\in\cX}\sum_{i\ge0}(|(A_P)_i|-1).
\end{equation}
A tame stabilizer of order $e$ contributes $e-1$.
If a $p$-group $S$ of order $s$ acts on $\cX$ and its short orbits
have lengths $\ell_1,\ldots,\ell_b$, the Deuring--Shafarevich formula is
\begin{equation}\label{eq:DSprelim}
 \gamma(\cX)-1=s\bigl(\gamma(\cX/S)-1\bigr)
                    +\sum_{i=1}^b(s-\ell_i).
\end{equation}
These formulas will be used both for the given group and for its
subgroups; see \cite[Chapter~11]{HKT}.

\subsection{Zero \texorpdfstring{$p$}{p}-rank and Sylow fixed points}
Let $G\le\Aut(\cX)$ be finite and let $S\in\Syl_p(G)$ be nontrivial.  Put
$s=|S|$.

\begin{lemma}\label{lem:sylow}
Assume that $\gamma(\cX)=0$.  Then every nontrivial $p$-subgroup of
$\Aut(\cX)$ fixes a unique point and acts freely away from that point.
The fixed points of the Sylow $p$-subgroups form one $G$-orbit $\Omega$.
If $P=P_S$ is the unique fixed point of $S$, then
\[
 G_P=N_G(S),\qquad G_P=S\rtimes H,
\]
where $H$ is cyclic of order $m$ prime to $p$.  Moreover the map
$S\mapsto P_S$ is a bijection from $\Syl_p(G)$ onto $\Omega$, and
\begin{equation}\label{eq:sylowcount}
 n:=|\Omega|=[G:G_P]\equiv1\pmod{s}.
\end{equation}
If $G$ has no global fixed point, then
\begin{equation}\label{eq:nlower}
 n\ge s+1,\qquad |G|\ge s(s+1).
\end{equation}
\end{lemma}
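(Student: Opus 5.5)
The plan is to extract the first assertion from the Deuring--Shafarevich formula \eqref{eq:DSprelim} and then derive the rest by elementary Sylow arguments. Let $T$ be a nontrivial $p$-subgroup of $\Aut(\cX)$ of order $t$. Applying \eqref{eq:DSprelim} to $T$ with $\gamma(\cX)=0$ gives
\[
 -1=t\bigl(\gamma(\cX/T)-1\bigr)+\sum_{i=1}^b(t-\ell_i).
\]
Each term $t-\ell_i$ is nonnegative, and every short orbit has $\ell_i\le t/p$, so each term is at least $t(1-1/p)$. If $\gamma(\cX/T)\ge1$, the right-hand side is $\ge0$, which is impossible. Hence $\gamma(\cX/T)=0$, and then $\sum_i(t-\ell_i)=t-1$. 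A single short orbit contributes $t-\ell_1\ge t-t/p$; two short orbits would contribute at least $2t(1-1/p)\ge t>t-1$, using $p\ge2$. So there is exactly one short orbit, and it has $\ell_1=1$. Thus $T$ fixes a unique point $P$ and acts semiregularly on the remaining points. The statement that $T$ acts freely away from $P$ means exactly this: any nontrivial $\sigma\in T$ fixing $Q$ lies in the stabilizer $T_Q\neq1$, so $Q$ is in a short orbit, hence $Q=P$. Applying this to the cyclic subgroup $\langle\sigma\rangle$ also shows that each nontrivial element fixes only $P$.

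Next, let $S\in\Syl_p(G)$ with fixed point $P=P_S$. If $g\in G$ fixes $P$, then $gSg^{-1}$ is a Sylow subgroup fixing $P$. Both $S$ and $gSg^{-1}$ lie in $G_P$, and $G_P=(G_P)_1\rtimes C_m$ has a unique Sylow $p$-subgroup $(G_P)_1$ by the preliminaries. Since $S\le (G_P)_1$ and $S$ is Sylow in $G$, we get $S=(G_P)_1$, which is normal in $G_P$. Conversely, $N_G(S)$ permutes the fixed points of $S$, and there is only one, so $N_G(S)\le G_P$. Therefore $G_P=N_G(S)=S\rtimes H$ with $H$ cyclic of order prime to $p$.

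The map $S\mapsto P_S$ is $G$-equivariant, since $P_{gSg^{-1}}=gP_S$, and all Sylow subgroups are conjugate, so its image is a single orbit $\Omega$. It is injective because two Sylow subgroups with the same fixed point $P$ both equal the unique Sylow subgroup of $G_P$. Hence $n=|\Omega|=[G:N_G(S)]$, which is $\equiv1\pmod p$ by Sylow's theorem. For the stronger congruence modulo $s$, let $S$ act on $\Omega\setminus\{P\}$. By the first part, every nontrivial element of $S$ fixes only $P$, so this action is semiregular and $s\mid n-1$.

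If $G$ has no global fixed point, then $n\neq1$, since otherwise $P$ would be fixed by $G$. Combined with $n\equiv1\pmod s$, this forces $n\ge s+1$. Finally, $|G|=n|G_P|\ge (s+1)s$. The only step requiring care is the counting in the Deuring--Shafarevich formula, in particular the observation that $\ell_i\mid t$ with $\ell_i<t$ forces $\ell_i\le t/p$.
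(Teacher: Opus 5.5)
Your proof is correct, and from $G_P=N_G(S)$ onward it follows the paper. That part uses the unique Sylow subgroup $(G_P)_1$ of the stabilizer, the equivariant bijection $S\mapsto P_S$, and $s\mid n-1$ from semiregularity of $S$ on $\Omega\setminus\{P\}$.

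The difference is in how you get the first assertion. The paper quotes Lemma~3.1 of Giulietti--Korchm\'aros for a single automorphism of order $p$. It then extends this to a whole $p$-group by taking a central element of order $p$. It also shows that no nonidentity element fixes a second point, since some power of it has order $p$.

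You instead apply the Deuring--Shafarevich formula \eqref{eq:DSprelim} directly to an arbitrary nontrivial $p$-subgroup $T$. First $\gamma(\cX/T)=0$. Then $\sum(t-\ell_i)=t-1$, and each term is at least $t(1-1/p)\ge t/2$. This forces exactly one short orbit, of length $1$. Your route is self-contained given the formulas in Section~\ref{sec:prelim}, and it gives freeness away from $P$ for the whole group in one step. The paper's route is shorter, at the cost of an external citation.

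The paper's proof also records that distinct Sylow $p$-subgroups intersect trivially. This TI property is not part of the statement, but it is what Section~\ref{sec:noncyclic} later feeds into Theorem~\ref{thm:GMPprelim}. It follows at once from what you prove. A nonidentity element of $S\cap S'$ has a unique fixed point, which must be both $P_S$ and $P_{S'}$. Your injectivity argument then gives $S=S'$.
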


\begin{proof}
An automorphism of order $p$ has exactly one fixed point when
$\gamma(\cX)=0$; see \cite[Lemma~3.1]{GKzero}.  Choosing an element of
order $p$ in the center of a nontrivial $p$-group shows that the whole
$p$-group fixes the same point.  Any nonidentity element has a power of
order $p$, and hence cannot fix any other point.  If two Sylow $p$-subgroups intersect nontrivially, their fixed
points coincide: a common element of order $p$ has only one fixed
point. Both subgroups then lie in the stabilizer of that point,
whose Sylow $p$-subgroup is unique, and hence they coincide.
Thus distinct Sylow $p$-subgroups have trivial intersection; cf.\
\cite[Lemma~3.3 and Theorem~3.4]{GKzero}.

The first ramification group of a point stabilizer is its unique Sylow
$p$-subgroup, which gives $G_P=N_G(S)$ and the usual decomposition
$G_P=S\rtimes H$ with $H$ cyclic and prime to $p$.  The correspondence
between Sylow subgroups and their fixed points is $G$-equivariant and
bijective.  Since $S$ acts freely on $\Omega\setminus\{P\}$, the order
$s$ divides $n-1$.  If $n>1$, then $n\ge s+1$.  Finally,
$|G|=nsm$, which gives \eqref{eq:nlower}.
\end{proof}

For later use, we also record the complementary estimate when
the $p$-rank is positive.

\begin{lemma}\label{lem:positive}
If $\gamma(\cX)>0$ and a nontrivial $p$-group $S$ of order $s$ acts on
$\cX$, then
\begin{equation}\label{eq:positive}
 s\le \frac{p}{p-2}(g-1).
\end{equation}
\end{lemma}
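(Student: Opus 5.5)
The plan is to combine the Deuring--Shafarevich formula \eqref{eq:DSprelim} for $S$ with the Riemann--Hurwitz formula \eqref{eq:RHprelim} for $S$. This is Nakajima's classical argument. Write $\gamma'=\gamma(\cX/S)$ and $g'=g(\cX/S)$, and recall $g'\ge\gamma'$. Let $\ell_1,\dots,\ell_b$ be the lengths of the short orbits of $S$. For a point $P$ in the $i$-th short orbit, $S_P$ is a nontrivial $p$-group of order $e_i=s/\ell_i\ge p$. Hence
\[
 s-\ell_i=s\Bigl(1-\frac1{e_i}\Bigr)\ge s\,\frac{p-1}{p}.
\]
The Hilbert different formula gives $d_P\ge 2(e_i-1)$, since the wild part contributes at least $(|S_P|-1)$ in both the $i=0$ and $i=1$ terms. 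Summing over the orbit gives a contribution of at least $2(s-\ell_i)$. Hence \eqref{eq:RHprelim} yields
\begin{equation*}
 2g-2\ \ge\ s(2g'-2)+2\sum_{i=1}^b(s-\ell_i).
\end{equation*}

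I then split according to $g'$.

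\emph{Case $g'\ge2$.} The inequality gives $2g-2\ge 2s$, so $s\le g-1\le \frac{p}{p-2}(g-1)$.

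\emph{Case $g'=1$.} If $b=0$, Riemann--Hurwitz gives $g=1$, which is excluded. So $b\ge1$, and then $2g-2\ge 2s(p-1)/p$. This gives $s\le\frac{p}{p-1}(g-1)$, which is stronger than needed.

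\emph{Case $g'=0$.} Here $\gamma'=0$, and this is where the hypothesis $\gamma(\cX)>0$ enters. Deuring--Shafarevich reads
\[
 \gamma-1=-s+\sum_{i=1}^b(s-\ell_i).
\]
Since $\gamma\ge1$, we get $\sum_i(s-\ell_i)\ge s$. Each single term satisfies $s-\ell_i<s$, so necessarily $b\ge2$. Plugging into Riemann--Hurwitz gives
\[
 2g-2\ge -2s+2\cdot 2s\frac{p-1}{p},
\]
that is, $g-1\ge s\,\frac{p-2}{p}$. This is exactly \eqref{eq:positive}; note that $p>2$ is used to make the coefficient $\frac{p-2}{p}$ positive.

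The only delicate step is the case $g'=0$. There one must extract $b\ge2$ from $\gamma>0$ via \eqref{eq:DSprelim}, and then use the lower bound $d_P\ge2(|S_P|-1)$ for wild ramification. Everything else is bookkeeping. I would cite \cite[Chapter~11]{HKT} for the lower bound on the different.
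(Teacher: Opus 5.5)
Your proof is correct, but it takes a mildly different route from the paper's.

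\textbf{The paper's route.} It never looks at the different. It applies Deuring--Shafarevich \eqref{eq:DSprelim} together with the trivial bound $\gamma(\cX)\le g$, and splits on the $p$-rank $\bar\gamma=\gamma(\cX/S)$. A separate Hurwitz step is needed only in the unramified case $\bar\gamma=1$, $b=0$.

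\textbf{Your route.} You take Riemann--Hurwitz with $d_P\ge 2(|S_P|-1)$ as the main inequality, namely $g-1\ge s(g'-1)+\sum_i(s-\ell_i)$. You split on the genus $g'$, and use Deuring--Shafarevich only to force $b\ge2$ when $g'=0$.

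\textbf{How they compare.} Since $g'\ge\bar\gamma$, your inequality dominates the paper's $g-1\ge\gamma(\cX)-1=s(\bar\gamma-1)+\sum_i(s-\ell_i)$. When $g'=0$, your Riemann--Hurwitz inequality is, via Deuring--Shafarevich, literally $g\ge\gamma(\cX)$. So in the decisive rational case the two computations coincide.

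\textbf{What each buys.} Your version shows that the hypothesis $\gamma(\cX)>0$ is needed only when $\cX/S$ is rational; otherwise you get at least $s\le\frac{p}{p-1}(g-1)$ for free. The paper must invoke it whenever $\bar\gamma=0$, including positive-genus quotients of $p$-rank zero. The paper's version, in turn, uses only orbit lengths, with no information about higher ramification.
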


\begin{proof}
Let $\bar\gamma$ be the $p$-rank of $\cX/S$, and let
$\ell_1,\dots,\ell_b$ be the lengths of the short $S$-orbits.  The
Deuring--Shafarevich formula gives
\[
 \gamma(\cX)-1
 =s(\bar\gamma-1)+\sum_{i=1}^b(s-\ell_i).
\]
Each $\ell_i\le s/p$.  If $\bar\gamma\ge2$, then
$\gamma(\cX)-1\ge s$.  If $\bar\gamma=1$ and $b>0$, then
$\gamma(\cX)-1\ge s(1-1/p)$.  If $\bar\gamma=0$, positivity of
$\gamma(\cX)$ forces $b\ge2$, and
\[
 \gamma(\cX)-1\ge s\left(1-\frac2p\right).
\]
Since $\gamma(\cX)\le g$, these cases imply \eqref{eq:positive}.  If
$\bar\gamma=1$ and $b=0$, the cover is unramified. Since $g\ge2$,
Hurwitz forces $g(\cX/S)\ge2$ and gives
$g-1=s(g(\cX/S)-1)\ge s$.
\end{proof}

\subsection{The tame action and the ramification jumps}
At a wild point $P$, write $S=(G_P)_1$, $s=|S|$, and
$G_P=S\rtimes H$, where $H=C_m$. Put
\[
 T=\sum_{i\ge1}(|(G_P)_i|-1).
\]
The following form of the tame-character bound is useful.

\begin{lemma}[Local tame-complement bound]\label{lem:local}
Let $j\ge1$ be the first lower jump, so that
$(G_P)_1=\cdots=(G_P)_j=S$ and $(G_P)_{j+1}<S$.  If
\[
 |(G_P)_j/(G_P)_{j+1}|=p^b,
\]
then
\begin{equation}\label{eq:local}
 m\mid j(p^b-1),
 \qquad
 m\le j(p^b-1)\le j(s-1)\le T.
\end{equation}
\end{lemma}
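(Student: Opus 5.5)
The plan is to use the standard description of the tame part of a ramification filtration (Serre, \emph{Local Fields}, Ch.~IV, Prop.~9). Let $G_P=S\rtimes H$ with $H=C_m$ acting by conjugation. Let $\theta_0\colon (G_P)_0/(G_P)_1\to K^\times$ be the tangent character, injective on $H\cong C_m$.

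For each $i\ge1$ there is an injective homomorphism
\[
 \theta_i\colon (G_P)_i/(G_P)_{i+1}\hookrightarrow \mathfrak m_P^i/\mathfrak m_P^{i+1}.
\]
This target is a one-dimensional $K$-vector space, and $\theta_i$ is equivariant in the sense that
\[
 \theta_i(h\sigma h^{-1})=\theta_0(h)^i\,\theta_i(\sigma),\qquad h\in(G_P)_0 .
\]
I take $i=j$. Since $(G_P)_j=S$ and $(G_P)_{j+1}$ is normal in $G_P$, the quotient $V=S/(G_P)_{j+1}$ is elementary abelian of order $p^b$. The image $\theta_j(V)$ is a finite additive subgroup $W$ of a copy of $K$, with $|W|=p^b$.

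The equivariance formula shows that $W$ is stable under multiplication by the subgroup $\mu=\theta_0(H)^j\le K^\times$. The group $\mu$ has order $m/\gcd(m,j)$. Then $\mu$ acts freely on $W\setminus\{0\}$, because multiplication by a scalar $\zeta\ne1$ fixes no nonzero element. Hence $m/\gcd(m,j)$ divides $p^b-1$. From this, $m$ divides $\gcd(m,j)(p^b-1)$, which in turn divides $j(p^b-1)$. This gives the divisibility claim in \eqref{eq:local}.

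The inequalities then follow directly.
\begin{itemize}
\item Divisibility gives $m\le j(p^b-1)$.
\item Since $p^b\le s$, we get $j(p^b-1)\le j(s-1)$.
\item Since $(G_P)_i=S$ for $1\le i\le j$, the first $j$ terms of $T=\sum_{i\ge1}(|(G_P)_i|-1)$ already contribute $j(s-1)$, and the remaining terms are nonnegative, so $j(s-1)\le T$.
\end{itemize}

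The main point needing care is the equivariance of $\theta_j$ with respect to the full tame group. This is where one uses that $H$ normalizes each $(G_P)_i$ and acts on the leading term $\sigma(u)/u-1$, or $\sigma(u)-u$, of $\sigma$ through $\theta_0(h)^i$. I expect to cite this from Serre rather than reprove it. If a direct computation is preferred, take a local parameter $u$ on which $H$ acts diagonally, $h(u)=\zeta u$, which is possible since $m$ is prime to $p$. For $\sigma\in(G_P)_j$, write $\sigma(u)=u+a u^{j+1}+\cdots$. Then $h\sigma h^{-1}(u)=u+\zeta^{j}a u^{j+1}+\cdots$, which exhibits the character $\zeta^j$ on the leading coefficients.
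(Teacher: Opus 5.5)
Your proof is correct and follows essentially the same route as the paper. Both arguments use the leading-coefficient map on $(G_P)_j/(G_P)_{j+1}$ and the fact that conjugation by the tame complement acts there by the scalar $\zeta^{\pm j}$, of order $m/\gcd(m,j)$ and acting freely on nonzero elements. The inequalities then follow from $p^b\le s$ and the definition of $T$. Note that $m\le j(p^b-1)$ implicitly uses $b\ge1$, which holds by the strict inclusion $(G_P)_{j+1}<S$.
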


\begin{proof}
Let $u$ be a uniformizer at $P$.  The tangent character embeds $H$ into
$K^\times$.  If a generator of $H$ has tangent multiplier $\zeta$, then
the standard coefficient map
\[
 (G_P)_j/(G_P)_{j+1}\longrightarrow (K,+),
 \qquad
 \sigma(u)=u+a_\sigma u^{j+1}+\cdots\longmapsto a_\sigma
\]
identifies the quotient with an $\F_p$-vector space $V$ of size $p^b$.
Conjugation by $H$ acts on $V$ by the scalar $\zeta^j$ or its inverse.
The order of this scalar is $m/\gcd(m,j)$, and it acts freely on
$V\setminus\{0\}$.  Hence
\[
 \frac{m}{\gcd(m,j)}\mid p^b-1,
\]
which gives the first divisibility in \eqref{eq:local}.  The remaining
inequalities are immediate from $p^b\le s$ and the definition of $T$.
\end{proof}

Upper numbering is compatible with quotients. In particular, if a
$p$-group has consecutive nontrivial lower groups of orders
$s,s_1,s_2$ and positive jumps $j_1<j_2<j_3$, its largest upper
jump is
\begin{equation}\label{eq:upperprelim}
 u_3=j_1+\frac{s_1}{s}(j_2-j_1)
           +\frac{s_2}{s}(j_3-j_2).
\end{equation}
For an abelian extension all upper jumps are integers by the
Hasse--Arf theorem. If a cyclic $p$-extension has order at least
$p^2$, its first two upper jumps satisfy $u_2\ge pu_1$.
Consequently, its first two lower jumps satisfy
\begin{equation}\label{eq:cyclicjumpprelim}
 j_2=j_1+p(u_2-u_1)\ge(p^2-p+1)j_1.
\end{equation}
See \cite[Chapter~IV]{SerreLocal} and \cite[Theorem~1.1]{OP}.
Under a tame base change of degree $d$ prime to $p$ at the branch
point, the positive lower jumps of the $p$-cover are multiplied
by $d$.

\subsection{Bounds and one-point covers}
We use two bounds for automorphism groups. First, if
$|A|>84(g-1)$, then $\cX/A$ is rational, and the possible short
orbits consist of one wild orbit, two wild orbits, one wild and
one tame orbit, or one wild and two tame orbits
\cite[Theorem~11.56]{HKT}. Second, a tame cyclic group fixing a
point on a curve of positive genus $h$ has order at most $4h+2$;
see \cite[Theorem~11.60]{HKT}, also recalled in
\cite[preprint, Theorem~3]{DGT}. For $h=1$ this also follows from the
classification of automorphisms of an elliptic curve fixing its
origin. In characteristics $5$ and $7$, an elliptic curve with
such an automorphism of order $4$ or $6$ has $j$-invariant $1728$
or $0$, respectively.

An elementary abelian cover of $\mathbb A^1_K$ with group
$(\F_Q,+)$, where $Q=p^a$, is described by Artin--Schreier theory.
In particular,
\begin{equation}\label{eq:AScohomology}
 H^1_{\mathrm{\acute et}}(\mathbb A^1_K,\F_Q)
      =K[x]/\{h^Q-h:h\in K[x]\}.
\end{equation}
For a cyclic cover of degree $p$ with a single branch point,
one may choose an equation $y^p-y=f(x)$ with $f$ a polynomial
whose nonconstant exponents are prime to $p$.
If $j=\deg f>0$, then
\[
 g=\frac{(p-1)(j-1)}2.
\]
The one-point result of Stichtenoth, in the form of
\cite[Theorem~3.1 and Corollary~3.4]{LM}, says that, for $g\ge2$,
an automorphism can move the point over infinity only in the
following cases, up to $K$-isomorphism:
\begin{equation}\label{eq:Stichtenothprelim}
 y^p-y=x^d,\quad d<p,\ d\mid p+1;
 \qquad y^p-y=x^{p+1}.
\end{equation}
For the first family the stabilizer at infinity has order
$pd(p-1)$ and its orbit has $p+1$ points.

For a cyclic cover of degree $r$ prime to $p$, Kummer theory gives
$K(\cX)=K(\cY)(z)$ with $z^r=f\in K(\cY)$.
At a point $B\in\cY$, the ramification index is
$r/\gcd(r,v_B(f))$. We will also use Riemann--Roch spaces
$L(nP)$ and the Weierstrass semigroup
\[
 H(P)=\{n\ge0:\text{there is a function with pole divisor }nP\}.
\]
Both Riemann--Roch spaces and their dimensions commute with
extension of the constant field; see \cite{HKT}.

\subsection{Finite groups}
A subgroup $S$ of a finite group $A$ is a TI subgroup if
$S\cap S^a=1$ whenever $a\notin N_A(S)$. Write $O_{p'}(A)$
for the largest normal subgroup of order prime to $p$.
We use the following odd-characteristic form of
\cite[Theorem~3.16]{GMP}.

\begin{theorem}[Guralnick--Malmskog--Pries]\label{thm:GMPprelim}
Let $S$ be a noncyclic Sylow $p$-subgroup of a finite group $A$,
where $p>2$. Suppose that $S$ is not normal, that it is a TI
subgroup, and that $N_A(S)=S\rtimes C$ with $C$ cyclic of order
prime to $p$. Then
\[
 M=Z(N_A(S))=O_{p'}(A)
\]
is normal in $A$, and $A/M$ is almost simple. Its socle is one of
\[
 \PSL(2,p^a)\ (a\ge2),\qquad
 \PSU(3,p^a)\ (p^a>2),\qquad
 {}^2G_2(3^{2a+1})'\ (p=3).
\]
The conjugation action on the Sylow $p$-subgroups is doubly
transitive. In the nonsmall cases it is the natural rank-one
action. The case ${}^2G_2(3)'\cong\PSL(2,8)$ is considered
separately.
\end{theorem}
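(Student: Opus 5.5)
This is a citation of \cite[Theorem~3.16]{GMP}, specialized to odd $p$, so the plan is to check that our hypotheses match theirs and then read off the conclusion.

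\emph{Step 1: matching hypotheses.} Guralnick, Malmskog and Pries work with a finite group $A$ having a Sylow $p$-subgroup $S$ that is TI and not normal, with $N_A(S)=S\rtimes C$ and $C$ cyclic of order prime to $p$. In the curve setting of Lemma~\ref{lem:sylow} this is exactly the structure $G_P=N_G(S)=S\rtimes H$, and distinct Sylow subgroups meet trivially. So we only compare these hypotheses with theirs.

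\emph{Step 2: identifying $M$ and the simple quotient.} First show that $M:=O_{p'}(A)$ is normal. Its intersection with $N_A(S)$ is a $p'$-group normalized by $S$. Since $[S,M\cap N_A(S)]\le S\cap M=1$, it centralizes $S$. A Frattini argument, $A=M\,N_A(S)$ applied to $SM/M$, together with the TI property, gives $M\le N_A(S)$: an element of $M$ moving $S$ would produce a conjugate $S^x\le SM$ with $S\cap S^x\ne1$ when $S$ is noncyclic and $M$ is small. Then $M\le Z(N_A(S))$, since $M$ commutes with $S$ and $C$ is abelian. The reverse inclusion is the statement that $Z(N_A(S))$ is normal in $A$; GMP obtain it from the structure of $A/M$.

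Passing to $\bar A=A/M$, the image of $S$ is still TI, noncyclic and non-normal, and now $O_{p'}(\bar A)=1$. One then invokes the classification of groups with a TI Sylow $p$-subgroup acting doubly transitively on $\Syl_p$. Bender's strongly embedded subgroup theorem is not needed for odd $p$, but the Hering--Kantor--Seitz classification of doubly transitive groups in which a point stabilizer has a normal subgroup regular on the remaining points is. This yields the socles $\PSL(2,p^a)$, $\PSU(3,p^a)$, ${}^2G_2(3^{2a+1})'$. The $\PSL(2,p)$ case, with cyclic Sylow subgroup, is exactly what noncyclicity of $S$ excludes; hence $a\ge2$ in the linear case. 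In characteristic $2$ the Suzuki groups would also occur, which is why they are absent here.

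\emph{Step 3: the small cases.} Note that ${}^2G_2(3)'\cong\PSL(2,8)$ has a cyclic Sylow $3$-subgroup of order $9$ acting on $28$ points, so it is excluded or flagged as in the statement. The main obstacle is that the whole content rests on the classification of doubly transitive (Zassenhaus-type) groups. In the paper I would simply cite \cite[Theorem~3.16]{GMP} and verify these hypothesis translations rather than reprove it.
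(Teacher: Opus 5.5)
\textbf{Main route.} Citing \cite[Theorem~3.16]{GMP} is exactly what the paper does. It quotes the result without proof, and its only verification is the hypothesis check at the start of Section~\ref{sec:noncyclic}, matching your Step~1. The reconstruction you add in Steps~2--3, however, contains steps that would fail and should not be written down as it stands.

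\textbf{Step 2: the central kernel.} Normality of $M=O_{p'}(A)$ is automatic; the content is the equality $M=Z(N_A(S))$. Your route to $M\le N_A(S)$ is wrong. The Frattini argument needs $SM\trianglelefteq A$, which you do not have. Indeed, $A=M\,N_A(S)$ together with your conclusion $M\le N_A(S)$ would give $A=N_A(S)$, i.e.\ $S\trianglelefteq A$, contrary to hypothesis. Also ``$M$ is small'' is not a hypothesis. The correct elementary argument is coprime action:
\begin{itemize}
\item since $p$ is odd and $S$ is noncyclic, $S$ contains some $E\cong C_p\times C_p$;
\item hence $M=\langle C_M(e):1\ne e\in E\rangle$;
\item for $x\in C_M(e)$ one has $1\ne e\in S\cap S^x$, so $x\in N_A(S)$ by the TI property.
\end{itemize}
Then $[M,S]\le M\cap S=1$. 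Moreover $M$ lies in a conjugate of the abelian complement $C$, so it is central in $N_A(S)$.

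\textbf{Double transitivity.} More seriously, you feed double transitivity on $\Syl_p(A)$ into Hering--Kantor--Seitz, but it is one of the conclusions and your sketch never produces it. TI only makes $S$ semiregular on the other Sylow subgroups. That gives $|\Syl_p(A)|\equiv1\pmod{|S|}$, not $|\Syl_p(A)|=|S|+1$. This is exactly where the classification enters. For example, $N_A(S)$ is strongly $p$-embedded in $A$, since a $p$-element of $N_A(S)\cap N_A(S)^g$ lies in $S\cap S^g$. One can then invoke the CFSG-based classification of groups with a strongly $p$-embedded subgroup and $p$-rank at least $2$ for odd $p$. The TI and cyclic-complement hypotheses then cut that list down to the rank-one groups. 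So the odd-$p$ analogue of Bender's theorem is needed, not avoided.

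\textbf{Step 3: the small Ree case.} The cyclicity of a Sylow $3$-subgroup of $\PSL(2,8)$ does not exclude $A/M\cong{}^2G_2(3)\cong\operatorname{P\Gamma L}(2,8)$. Its Sylow $3$-subgroup has order $27$, is noncyclic and TI, and has normalizer $S\rtimes C_2$. This is why the statement only sets the case aside, and why the paper must eliminate it separately in Remark~\ref{rem:smallree}.
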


The natural root groups are regular away from their fixed point.
Their orders and the orders of the two-point stabilizers are
\begin{equation}\label{eq:rankonedata}
\begin{array}{c|c|c}
 A&|S|&\text{two-point stabilizer}\\ \hline
 \PSL(2,q),\ \PGL(2,q)&q&(q-1)/2,\ q-1\rule{0pt}{2.5ex}\\
 \PSU(3,q),\ \PGU(3,q)&q^3&(q^2-1)/\mu,\ q^2-1\\
 \Ree(q)&q^3&q-1
\end{array}
\end{equation}
where $\mu=(3,q+1)$. Thus the natural degrees are $q+1$ and
$q^3+1$. The necessary orders $e>1$ of cyclic semiregular
subgroups are as follows:
\[
\begin{array}{c|l}
\PSL(2,q),\ \PGL(2,q)&e\mid(q+1)/2,\quad e\mid q+1,
  \text{ respectively};\\
\PSU(3,q),\ \PGU(3,q)&e\mid(q+1)/2\quad\text{or}\quad
 e\mid(q^2-q+1)/\delta;\\
\Ree(q)&e\mid(q+1)/4\quad\text{or}\quad e\mid q\pm3q_0+1.
\end{array}
\]
Here $\delta=(3,q+1)$ in the special unitary case and $\delta=1$
in the projective unitary case. Involutions fix points in the
unitary and Ree actions. These restrictions follow from
\cite{GKsemi} and the unitary element types in
\cite[Lemma~2.2]{MZ}. Further exclusions using the order bound
are proved in Section~\ref{sec:rankone}.
The groups $\operatorname{SL}_2(q)$ for $q\ge5$,
$\operatorname{SU}_3(q)$ for $q\ge3$, and $\Ree(q)$ for $q\ge27$
are perfect. The first two have scalar centers of orders
$2$ and $(3,q+1)$, respectively; see \cite{GMP,HKT}.

We also use Dickson's classification of finite subgroups of
$\PGL_2(K)$. A tame subgroup is cyclic, dihedral, $A_4$, $S_4$,
or $A_5$. A subgroup whose order is divisible by $p$ either
fixes a point, is of linear type $\PSL(2,q)$ or $\PGL(2,q)$,
or is the exceptional $A_5$ in characteristic $3$; see
\cite[Chapter~11]{HKT}. In particular, a subgroup of
$\PGL(2,p)$ containing a $p$-element either fixes a point of
$\PP^1(\F_p)$ or contains $\PSL(2,p)$.
For a finite $p$-group $S$, the kernel of
$\Aut(S)\to\Aut(S/\Phi(S))$ is a $p$-group. We will use this
fact together with Schur--Zassenhaus to compare tame actions
on marked covers.

\subsection{The Hermitian, Ree and Klein curves}
The Hermitian curve $H_q:z^q+z=y^{q+1}$ has genus
$q(q-1)/2$, full geometric automorphism group $\PGU(3,q)$,
and $q^3+1$ points over $\F_{q^2}$. At a rational point its
Weierstrass semigroup is generated by $q,q+1$.
Its Sylow group is a Heisenberg group of order $q^3$; its center
and commutator subgroup coincide and have order $q$. The two
positive lower jumps for its one-point Sylow cover are
$1,q+1$. The tame action on the quotient and center is by
$\alpha$ and $\alpha^{q+1}$, respectively, with
$\alpha\in\F_{q^2}^{\times}$; see \cite{HKT,GMP}.

For $q=3q_0^2\ge27$, the Ree curve has equations
\[
 y^q-y=v^{q_0}(v^q-v),\qquad
 z^q-z=v^{2q_0}(v^q-v).
\]
Its genus is $\frac32q_0(q-1)(q+q_0+1)$, its full geometric
group is $\Ree(q)$, and it has $q^3+1$ points over $\F_q$.
A Sylow group $S$ has orders
$|S|=q^3$, $|S'|=q^2$, and $|Z(S)|=q$.
On the successive factors the torus acts with weights
$1,q_0+1,2q_0+1$. The three positive lower jumps are
$1,3q_0+1,q+3q_0+1$; see \cite{Skabelund,GMQZ}.
The cyclic cover of degree $q-3q_0+1$ constructed in
\cite{Skabelund} admits the full lift of $\Ree(q)$.
The full group of that cover is
$C_{q-3q_0+1}\times\Ree(q)$ \cite{GMQZ}.

Finally, the genus-three Hurwitz curve is the Klein quartic,
with group $\PSL(2,7)$ and signature $(2,3,7)$ in
characteristic prime to $168$. Its characteristic-$3$ model is
isomorphic over $K$ to $H_3$; see \cite{Elkies}.
The tame lifting theorem \cite{SGA1} and uniqueness of a smooth
stable model allow the characteristic-zero characterization to
be used for a tame action in positive characteristic.
For the $p$-rank calculation in Section~\ref{sec:smallgenera},
we use the plane-quartic Hasse--Witt formula: with
$v_1=(2,1,1)$, $v_2=(1,2,1)$, $v_3=(1,1,2)$, the $(i,j)$
entry is the coefficient with exponent $pv_j-v_i$ in $F^{p-1}$,
where $F=0$ is the quartic equation; see \cite{SV} for the Cartier
operator formula. The $p$-rank is the stable
rank of Frobenius, not in general the rank of a single matrix.

\subsection{The finite-field characterization}\label{sec:GSprelim}
Let $p>2$, $t\ge1$, $q_0=p^t$, and $q=p^{2t-1}$. Write
$\Gamma(q,q_0)$ for the group of transformations
\begin{equation}\label{eq:GSgroup}
 (x,y)\longmapsto
 (\alpha x+\beta,\,
  \alpha\beta^{q_0}x+\alpha^{q_0+1}y+\gamma),
 \qquad
 \alpha\in\F_q^\times,\quad \beta,\gamma\in\F_q.
\end{equation}
It has order $q^2(q-1)$ and is the semidirect product of the
subgroup with $\alpha=1$ and the cyclic subgroup with
$\beta=\gamma=0$. The generalized Suzuki curve $\cX_{\mathrm{GS}}$
is the smooth projective model of
\begin{equation}\label{eq:GSmodel}
 y^q-y=x^{q_0}(x^q-x).
\end{equation}
This curve was introduced by Borges and Coutinho \cite{BC}.
The following is the characterization in \cite[Theorem~1.1]{Taf},
with its convention that the geometric automorphism group is
$\F_q$-rational made explicit.

\begin{theorem}\label{thm:GSprevious}
Assume $t\ge2$. Let $\cX/\F_q$ be a smooth projective geometrically irreducible
curve of genus $q_0(q-1)/2$. Suppose that every geometric
automorphism of $\cX$ is defined over $\F_q$ and that
$\Aut_{\overline{\F}_q}(\cX)\cong\Gamma(q,q_0)$, with the
structure specified in \eqref{eq:GSgroup}.
If this group fixes a point of $\cX(\F_q)$, then $\cX$ is
birationally equivalent over $\F_q$ to \eqref{eq:GSmodel}.
\end{theorem}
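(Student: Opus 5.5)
The plan is to rebuild the model \eqref{eq:GSmodel} directly from the group action at the fixed point. Let $P\in\cX(\F_q)$ be the fixed point, and put $G=\Aut(\cX)\cong\Gamma(q,q_0)$. Inside $G$ consider three subgroups:
\begin{itemize}[leftmargin=*]
\item the Sylow $p$-subgroup $S$, of order $q^2$, which is the subgroup with $\alpha=1$;
\item its central subgroup $Z$, given by $\alpha=1$, $\beta=0$, of order $q$;
\item the torus $C$, given by $\beta=\gamma=0$, cyclic of order $q-1$.
\end{itemize}
All three fix $P$. The first step is ramification. I will apply \eqref{eq:RHprelim} to $Z$ and to $S$, together with the Deuring--Shafarevich formula \eqref{eq:DSprelim}. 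The goal is to show that $\cX/Z$ is rational and that $P$ is the only ramified point of $\cX\to\cX/Z$.

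Since $P$ is $G$-fixed, the torus $C$ acts on $(G_P)_i/(G_P)_{i+1}$, and Lemma~\ref{lem:local} constrains the jumps. With $g=q_0(q-1)/2$, the Hurwitz count for $Z$ forces $g(\cX/Z)=0$. It also forces total ramification at $P$ only, with different exponent $(q-1)(q_0+2)$; equivalently, there is a single lower jump $q_0+1$ for $Z$. Any other configuration, for instance $g(\cX/Z)\ge1$ or additional short $Z$-orbits, would make the right-hand side too large or would contradict divisibility by $q-1$ forced by the torus. The same computation for $S/Z$ acting on $\cX/Z\cong\PP^1$ shows that $S/Z$ acts on $\cX/Z$ as the translations of $\mathbb A^1$, with $C$ acting by $x\mapsto\alpha x$.

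Next I will pick coordinates via the Weierstrass semigroup $H(P)$. From the ramification data, $q$ and $q+q_0$ lie in $H(P)$. Choose $x\in L(qP)$ generating $K(\cX)^Z$, and $y\in L((q+q_0)P)$ outside $K(x)$. Averaging over $C$, which is $\F_q$-rational and tame, I can take both to be $\F_q$-rational $C$-eigenfunctions, with characters $\alpha$ and $\alpha^{q_0+1}$ matching \eqref{eq:GSgroup}. Then $Z$ acts by $y\mapsto y+\gamma$ and $S$ acts by $x\mapsto x+\beta$. Artin--Schreier theory \eqref{eq:AScohomology} gives an equation $y^q-y=f(x)$ with $f\in\F_q[x]$ of degree $q+q_0$, determined modulo $h^q-h$.

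The main obstacle is pinning down $f$. The relation
\[
 \sigma_\beta(y)=y+\beta^{q_0}x+c(\beta)
\]
from the group law yields the functional equation
\[
 f(x+\beta)-f(x)=\beta^{q_0}(x^q-x)+\bigl(c(\beta)^q-c(\beta)\bigr)
 \qquad\text{for all }\beta\in\F_q .
\]
I will solve it monomial by monomial. Combined with the requirement that $f$ be a $C$-eigenvector of weight $\alpha^{q_0+1}$, and with reduction modulo $h^q-h$, this should leave $f=\lambda x^{q_0}(x^q-x)$ for some $\lambda\in\F_q^\times$. Rescaling $y$ then normalizes $\lambda$ to $1$.

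The delicate points are twofold. First, the $C$-weight condition alone does not exclude every monomial $x^k$ with $k\equiv q_0+1\pmod{q-1}$; the translation equation is needed to kill them. Second, $c(\beta)$ must be shown to be compatible, so that no spurious solutions arise. If the monomial analysis becomes unwieldy, I would instead compare genera of intermediate $\F_p$-subcovers to bound the possible exponents. Finally, $[K(\cX):K(x)]=q$ and $y\notin K(x)$ give $K(\cX)=K(x,y)$ over $\F_q$, which is the claimed birational equivalence.
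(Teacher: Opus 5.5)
The paper gives no proof of this statement; it quotes it from \cite[Theorem~1.1]{Taf}, so your sketch has to stand on its own. The early steps are sound. Zero $p$-rank follows from Lemma~\ref{lem:positive}, $\cX/Z\cong\PP^1$ with a single jump $q_0+1$, $S/Z$ acts by translations by $\F_q$, and $C$ acts by $x\mapsto\zeta x$.

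\textbf{The gap.} The step that actually determines the curve is not proved, and the two facts you feed into it are not available.

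\emph{First}, $q+q_0\in H(P)$ does not follow from the ramification data. The curve $y^q-y=x^{q_0+1}$ has the same genus and zero $p$-rank. It has the same $Z$-cover (rational quotient, single jump $q_0+1$) and the same $C$-weights $\zeta,\zeta^{q_0+1}$. Yet its semigroup at infinity is $\langle q,q_0+1\rangle$, which does not contain $q+q_0$. What distinguishes it is that $x\mapsto x+\beta$ does not lift. So $\deg f=q+q_0$ is an output of the lifting condition, not an input.

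\emph{Second}, an abstract isomorphism $\Aut(\cX)\cong\Gamma(q,q_0)$ does not give $\sigma_\beta(y)=y+\beta^{q_0}x+c(\beta)$. All you know a priori is that $u_\beta=\sigma_\beta(y)-y$ is fixed by $Z=Z(S)$ and is integral. Hence $u_\beta\in K[x]$ and $u_\beta^q-u_\beta=f(x+\beta)-f(x)$. Your functional equation therefore presupposes the answer, and ``this should leave $f=\lambda x^{q_0}(x^q-x)$'' is the whole content of the theorem.

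\emph{Minor point.} Since $q$ is not prime, $y\notin K(x)$ does not give $K(\cX)=K(x,y)$. Use instead that the $Z$-conjugates $y+\gamma$ are pairwise distinct.

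\textbf{How to close it along your lines.}
\begin{enumerate}[label=\textup{(\arabic*)},leftmargin=*]
\item Use the tangent character at the single jump $q_0+1$ (as in Lemma~\ref{lem:local}) to identify $Z$ with $\F_q$ so that $C$ acts with weight $q_0+1$. Then take $y$ from additive Hilbert~90 with $\tau_\gamma(y)=y+\gamma$. Averaging over $C$ and $q$-reducing gives $y^q-y=f(x)$ with $f(\zeta x)=\zeta^{q_0+1}f(x)$.
\item Every degree-$p$ subcover $w^p-w=\lambda f$ has conductor $q_0+1$. As in the proof of Lemma~\ref{lem:twists}, each monomial $x^{hp^k}$ of $f$, with $p\nmid h$ and $k<2t-1$, therefore has $h\le q_0+1$. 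This conductor bound, not the weight condition alone, is what limits the exponents.
\item Rotating base-$p$ digits in $hp^k\equiv p^t+1\pmod{p^{2t-1}-1}$ leaves only $(h,k)=(q_0+1,0)$ and $(p^{t-1}+1,t)$. Thus $f=c_1x^{q_0+1}+c_2x^{q+q_0}$.
\item For $\beta\in\F_q$, $q$-reduction gives
\[
 f(x+\beta)-f(x)\equiv(c_1+c_2)\beta x^{q_0}+(c_1+c_2^{1/q})\beta^{q_0}x+\text{const}.
\]
This must vanish modulo $\{u^q-u:u\in K[x]\}$, which forces $c_1=-c_2\in\F_q^\times$. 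Rescaling $y$ by an element of $\F_q^\times$ gives the model.
\end{enumerate}
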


\section{The short orbits}\label{sec:two}

In Sections~\ref{sec:two}--\ref{sec:smallgenera}, we assume that
$\gamma(\cX)=0$.  Let $P=P_S$, write
\[
 G_P=S\rtimes H,\qquad |H|=m,
\]
and set
\begin{equation}\label{eq:Tdef}
 T=\sum_{i\ge1}\bigl(|(G_P)_i|-1\bigr),
 \qquad x=2g-2,
 \qquad h=g(\cX/S).
\end{equation}
Since $S$ ramifies only at $P$, Hurwitz for the $S$-cover gives
\begin{equation}\label{eq:hurwitzS}
 x=2sh-s-1+T.
\end{equation}

\begin{theorem}[Uniform two-orbit reduction]\label{thm:two}
Suppose that $G$ fixes no point and
\begin{equation}\label{eq:weaklarge}
 |G|>\max\{84(g-1),8(g-1)^2\}.
\end{equation}
Then $\cX/G$ is rational and $G$ has exactly two short orbits: the wild
orbit $\Omega$ and one tame orbit $\Delta$.  No assumption on the
structure of the Sylow $p$-subgroups is required.

In particular, if $g\ge4$ and
\[
 |G|>24g(g-1),
\]
then the conclusion holds.
\end{theorem}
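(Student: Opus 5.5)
The plan is to start from the classification of short orbits for $|A|>84(g-1)$ recorded in the preliminaries. Zero $p$-rank then removes one configuration immediately, and Riemann--Hurwitz together with the inequalities of Lemma~\ref{lem:sylow} and Lemma~\ref{lem:local} removes the two remaining unwanted configurations.

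\textbf{Setup and reduction to three configurations.} Since $|G|>84(g-1)$, \cite[Theorem~11.56]{HKT} gives that $\cX/G$ is rational. The short orbits are then one wild orbit, two wild orbits, one wild and one tame orbit, or one wild and two tame orbits. By Lemma~\ref{lem:sylow}, any point with stabilizer of order divisible by $p$ is fixed by an element of order $p$. That element lies in some Sylow subgroup and has a unique fixed point, so the point is the fixed point of that Sylow subgroup and lies in $\Omega$. Hence there is exactly one wild orbit, namely $\Omega$, and the two-wild-orbit case is excluded.

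The wild orbit contributes $n(sm-1+T)$ to \eqref{eq:RHprelim}, where $|G|=nsm$. The facts I will use are:
\begin{itemize}
\item $n\ge s+1$, by \eqref{eq:nlower}, since $G$ has no fixed point;
\item $m\le T$, by Lemma~\ref{lem:local};
\item $T\ge 1$, since $S\ne1$ (the group $G$ moves points and is large, so $p$ divides $|G|$).
\end{itemize}

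\textbf{Only the wild orbit.} Riemann--Hurwitz gives
\[
x=-2nsm+n(sm-1+T)=n(T-sm-1).
\]
Since $x>0$, this forces $T\ge sm+2$, so $n\le x$ and hence $s<n\le x$. By \eqref{eq:hurwitzS}, $T\le x+s+1$. Therefore
\[
|G|=nsm\le n(T-1)=x+n(sm)\cdot 0+\dots\le x\cdot(T-1)\le x(x+s)<2x^2=8(g-1)^2,
\]
where the key estimate is $n(T-1)\le n\bigl((T-sm-1)+sm\bigr)$ combined with $n\le x$ and $sm\le T-1$, giving $|G|\le x(T-1)$. This contradicts \eqref{eq:weaklarge}.

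\textbf{One wild and two tame orbits.} Let the tame stabilizers have orders $e_1,e_2\ge2$. Then
\[
x=|G|\Bigl(1-\tfrac1{e_1}-\tfrac1{e_2}\Bigr)+n(T-1).
\]
If $(e_1,e_2)\ne(2,2)$, the bracket is at least $1/6$ and $n(T-1)\ge0$, so $|G|\le 6x=12(g-1)$. This contradicts $|G|>84(g-1)$.

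If $e_1=e_2=2$, then $x=n(T-1)$. Because $g\ge2$ we must have $T\ge2$, so $nT\le 2x$ and $n\le x$, giving $s<x$. Combining with $m\le T$,
\[
|G|=nsm\le s\cdot nT\le 2x\cdot s<2x^2=8(g-1)^2,
\]
which again contradicts \eqref{eq:weaklarge}. The only surviving configuration is one wild orbit $\Omega$ and one tame orbit $\Delta$.

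\textbf{The final clause and the delicate point.} For $g\ge4$ the bound $24g(g-1)$ exceeds $84(g-1)$, since this needs only $g>7/2$. It also exceeds $8(g-1)^2$ trivially. So the hypothesis $|G|>24g(g-1)$ implies \eqref{eq:weaklarge}, and the conclusion follows.

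The step I expect to need most care is the bookkeeping in the two quadratic cases. One has to check that the inequalities $s<n\le x$ and $m\le T\le x+s+1$ really combine into a bound of at most $2x^2$. In particular, in the one-orbit case, positivity of $x$ must force $T-sm-1\ge1$ so that $n\le x$. No structural information on $S$ is used anywhere in the argument.
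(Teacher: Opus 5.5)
Your proposal is correct and follows the paper's proof essentially step by step: you exclude two wild orbits via Lemma~\ref{lem:sylow}; in the one-orbit and $(e_1,e_2)=(2,2)$ cases you get $n\le x$ from Hurwitz and combine it with $s<n$, $m\le T$ and \eqref{eq:hurwitzS} to obtain $|G|<2x^2$; and otherwise you use the normalized bound $\ge 1/6$. The only blemish is the garbled middle of your one-orbit display, which should simply read $|G|=nsm\le n(T-1)\le x(T-1)\le x(x+s)<2x^2$.
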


\begin{proof}
The standard large-group theorem above the Hurwitz bound gives
$\cX/G\cong\PP^1$ and leaves the following possibilities: one wild short
orbit; two wild short orbits; three short orbits, one wild and two tame;
or one wild and one tame short orbit; see
\cite[Theorem~11.56]{HKT}.  Every wild point belongs to $\Omega$ by
Lemma~\ref{lem:sylow}, so two wild short orbits are impossible.

Put $n=|\Omega|$.  Since $G$ has no fixed point,
$n\ge s+1$.  If $\Omega$ is the only short orbit, Hurwitz gives
\[
 x=n(T-sm-1).
\]
Thus $n\le x$.  Combining this with \eqref{eq:hurwitzS},
\[
 sm=s(1-2h)+x\left(1-\frac1n\right)<s+x.
\]
Since $s<n\le x$,
\[
 |G|=nsm<n(s+x)<2x^2=8(g-1)^2,
\]
contrary to \eqref{eq:weaklarge}.

Suppose that there are two tame short orbits, with stabilizer orders
$e_1,e_2$.  If at least one of them is at least $3$, the normalized
Hurwitz formula gives
\[
 \frac{x}{|G|}
 =
 1+\frac{T-1}{sm}-\frac1{e_1}-\frac1{e_2}
 >\frac16,
\]
contradicting $|G|>84(g-1)$.  Hence $e_1=e_2=2$.  Their combined
different contribution is $|G|$, and Hurwitz becomes
\[
 x=n(T-1).
\]
Thus $n\le x$.  By Lemma~\ref{lem:local},
\[
 |G|=nsm\le nsT=s(x+n)<2x^2,
\]
again a contradiction.  The only remaining configuration is one wild and
one tame short orbit.

Finally, for $g\ge4$,
\[
 8(g-1)^2<24g(g-1),
 \qquad
 84(g-1)\le24g(g-1),
\]
so the last assertion follows.
\end{proof}

\begin{corollary}[Exact relations in the remaining configuration]
\label{cor:relations}
Under the hypotheses of Theorem~\ref{thm:two}, let $R\in\Delta$ and put
$e=|G_R|$.  Then $e>1$, $p\nmid e$, and
\begin{align}
 \frac{x}{n}&=T-1-\frac{sm}{e},\label{eq:general1}\\
 2g&=2sh+T-s+1,\label{eq:general2}\\
 n&\equiv1\pmod{s},\qquad
 n\ge s+1,\qquad
 m\le T.\label{eq:general3}
\end{align}
\end{corollary}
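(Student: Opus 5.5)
The corollary follows by writing out the Riemann--Hurwitz formula \eqref{eq:RHprelim} for the single configuration left by Theorem~\ref{thm:two}, and then comparing with the $S$-cover formula \eqref{eq:hurwitzS}.

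\emph{Properties of $e$.} By Theorem~\ref{thm:two}, $\cX/G\cong\PP^1$ and the short orbits are exactly $\Omega$ and one tame orbit $\Delta$. Because $\Delta$ is a short orbit, its stabilizer is nontrivial, so $e>1$. Suppose $p\mid e$. Then $G_R$ contains a nontrivial $p$-subgroup, and by Lemma~\ref{lem:sylow} this forces $R\in\Omega$, contradicting $\Delta\neq\Omega$. Hence $p\nmid e$.

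\emph{Equation \eqref{eq:general1}.} Apply \eqref{eq:RHprelim} to $G$ with $g(\cX/G)=0$.
\begin{itemize}[label=--]
\item Each of the $n$ points of $\Omega$ has stabilizer of order $sm$. Its different exponent is $(sm-1)+T$: the term $i=0$ gives $sm-1$, and the terms $i\ge1$ sum to $T$.
\item Each of the $|G|/e$ points of $\Delta$ contributes $e-1$.
\end{itemize}
Using $|G|=nsm$, this gives
\[
 x=-2nsm+n(sm-1+T)+nsm\Bigl(1-\tfrac1e\Bigr)=n\Bigl(T-1-\tfrac{sm}{e}\Bigr).
\]
Dividing by $n$ yields \eqref{eq:general1}.

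\emph{Equation \eqref{eq:general2}.} Rewrite \eqref{eq:hurwitzS}, namely $2g-2=2sh-s-1+T$, as $2g=2sh+T-s+1$.

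\emph{Relations \eqref{eq:general3}.} The congruence $n\equiv1\pmod s$ is \eqref{eq:sylowcount}. The bound $n\ge s+1$ is \eqref{eq:nlower}, which applies because $G$ has no fixed point. The bound $m\le T$ is Lemma~\ref{lem:local}. That lemma needs a first lower jump $j\ge1$ with $S\ne1$; this holds because $S$ is the nontrivial wild inertia group at $P$.

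The proof is routine bookkeeping. The one point needing care is the different exponent at wild points: the $i=0$ term $sm-1$ must be kept separate from $T$, which collects only the terms with $i\ge1$.
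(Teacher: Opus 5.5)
Your proposal is correct and matches the paper's argument. The paper's proof likewise inserts the tame contribution $|G|(1-1/e)$ and the wild contribution $n(sm-1+T)$ into Riemann--Hurwitz, and cites \eqref{eq:hurwitzS}, \eqref{eq:sylowcount}, \eqref{eq:nlower} and Lemma~\ref{lem:local} for the remaining relations; you just write out the bookkeeping it leaves implicit.
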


\begin{proof}
The tame short orbit contributes $|G|(1-1/e)$ to Hurwitz.
Substitution gives \eqref{eq:general1}.  The remaining assertions were
proved above.
\end{proof}

\section{The non-cyclic Sylow case}\label{sec:noncyclic}

Assume
\begin{equation}\label{eq:mainhyp}
 p>2,\qquad
 \gamma(\cX)=0,\qquad
 g\ge4,\qquad
 |G|>24g(g-1),
\end{equation}
and suppose that $G$ has no global fixed point.  Let $S\in\Syl_p(G)$ be
non-cyclic.

Theorem~\ref{thm:GMPprelim} applies to $G$:
its Sylow subgroup is noncyclic, is a TI subgroup, is not normal
(since there is no global fixed point), and has cyclic tame
normalizer quotient. Thus, if
$I=N_G(S)=G_P$, then $M=Z(I)=O_{p'}(G)$ is a normal $p'$-subgroup
and $G/M$ is almost simple. Its socle is $\PSL(2,q)$,
$\PSU(3,q)$, or ${}^2G_2(q)'$, in the corresponding natural
doubly transitive action. The small Ree possibility is included
at this stage. A quotient $G/M\cong\PSL(2,8)$ in characteristic
$3$ cannot occur here, since its Sylow $3$-subgroups are cyclic
and the $p'$-kernel preserves the Sylow isomorphism type.
In the nonsmall natural actions the root subgroup of the socle is
regular on the complement of its fixed point. A Sylow subgroup
containing it must also act freely on that complement, so it cannot
be larger. Thus no extra $p$-part comes from an outer automorphism
in these cases. The only other candidate is
$G/M\cong{}^2G_2(3)\cong\operatorname{P\Gamma L}(2,8)$:
its Sylow $3$-subgroups have order $27$ and their conjugacy action
has degree $28$. This candidate is excluded after the central-kernel
calculation below. In every case still under consideration,
\begin{equation}\label{eq:n=s+1}
 n=|\Omega|=s+1.
\end{equation}

\begin{proposition}\label{prop:outer}
Apart from the small Ree candidate ${}^2G_2(3)$, the induced group
is $\PSL(2,q)$, $\PGL(2,q)$, $\PSU(3,q)$,
$\PGU(3,q)$, or ${}^2G_2(q)$ with $q\ge27$ in the Ree case.
Nontrivial field-automorphism extensions of these groups do not occur.
\end{proposition}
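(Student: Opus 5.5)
Write $\bar G=G/M$ and let $\bar S$ be the image of $S$, so that $\bar S\cong S$ because $M$ is a $p'$-group. The plan is to show that $\bar G$ contains no element of order $p$ outside the socle, and no $p'$-element that induces a field automorphism.

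\emph{Step 1: the $p$-part.} This is the argument already sketched before the statement. In the nonsmall natural actions a root subgroup $U$ of the socle $L$ is regular on the other $|U|$ points, and $n=|U|+1$. The Sylow subgroup $\bar S$ contains $U$ and acts freely on $\Omega\setminus\{P\}$ by Lemma~\ref{lem:sylow}, so $\bar S=U$. Hence $\bar G/L$ is a $p'$-group. In the Ree case this excludes ${}^2G_2(3)$ only in the small case, which is set aside. It also forces $q\ge27$ for $\Ree(q)$, and $q=p^a$ with $a\ge2$ or $q=p$ for $\PSL$ as allowed by Theorem~\ref{thm:GMPprelim}.

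\emph{Step 2: the $p'$-outer part.} With $q=p^f$, we have $\operatorname{Out}(L)$ equal to diagonal times field automorphisms. Any field automorphism of order prime to $p$ that survives Step~1 lies in the point stabilizer $\bar G_P=\bar S\rtimes\bar H$, after conjugating into the Borel subgroup. But $\bar G_P$ is the image of $G_P=S\rtimes H$ with $H$ cyclic, so $\bar G_P/\bar S$ is cyclic. I will use Lemma~\ref{lem:local} in the following form. The group $H$ acts on the first ramification quotient $S/(G_P)_{j+1}$, which for these Sylow groups is $S/\Phi(S)$ or its relevant factor, by multiplication by a scalar $\zeta^j$ of $K^\times$. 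That action is $\F_p$-linear and also $K$-semilinear-free: it is a scalar multiplication on a subgroup of $(K,+)$.

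A nontrivial field automorphism $\phi$ acts on the root group $U\cong(\F_q,+)$, or on its Frattini quotient, by $x\mapsto x^{p^k}$, which is not $\F_q$-linear. Combined with the torus part, an element $t\phi$ of $\bar H$ would act on the image of $U/\Phi(U)$ as $x\mapsto \lambda x^{p^k}$. I will check that this map is injective but not the restriction of a scalar multiplication on the $\F_p$-span embedded in $K$ via the coefficient map. For this I need that the coefficient map identifies the $\F_p$-space $V$ with an additive subgroup of $K$ on which $H$ acts by scalars; the maps $x\mapsto\lambda x^{p^k}$ with $k\ne 0$ and $\lambda$ scalar are not of that form when $f\ge2$. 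This is the main obstacle. The identification of the tame action on the first quotient with the Frobenius-twisted action needs care: the coefficient map is only $\F_p$-linear, so I must argue through eigenvalues. An $\F_p$-linear map that is multiplication by a scalar $c$ on a subgroup $V\subset K$ has all its eigenvalues (over $\bar{\F}_p$) equal to Galois conjugates of $c$ in a single Frobenius orbit, and these are determined by its minimal polynomial. I then compare this with the eigenvalue structure of $t\phi$ on $\F_q$, which permutes the eigenlines cyclically with eigenvalues $\mu\zeta$ for $\zeta$ ranging over the roots of unity of order $f/\gcd(k,f)$.

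Next I compare orders. The element $t\phi$ has order divisible by the order of $\phi$. However, scalar multiplication by an element of $\F_{p^b}^\times$ preserving $V$ has order dividing $p^b-1$ and commutes with the $\F_{p^b}$-structure. This forces $(t\phi)^{\operatorname{ord}\phi}$ to act as a scalar in $\F_q^\times$ while the eigenvalues of $t\phi$ are not a single Frobenius orbit times a constant, which gives a contradiction unless $\phi=1$. If this eigenvalue argument becomes awkward, I would fall back on a counting argument. Field automorphisms enlarge $m=|H|$ beyond the orders allowed by Lemma~\ref{lem:local} and \eqref{eq:general1}, and I can combine this with the diagonal-part orders in \eqref{eq:rankonedata} and the relation $n=s+1$ to get a contradiction with $|G|>24g(g-1)$.

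\emph{Step 3: conclusion.} Once field automorphisms are excluded, only diagonal outer automorphisms remain. The group $\bar G$ therefore lies between $L$ and its inner-diagonal group, which gives $\PSL(2,q)$ or $\PGL(2,q)$ and $\PSU(3,q)$ or $\PGU(3,q)$. The Ree group has trivial diagonal automorphisms, so $\bar G={}^2G_2(q)$.
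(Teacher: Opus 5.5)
\textbf{Step~2 has a genuine gap.} Your Steps~1 and~3 agree with the paper, including the argument, given before the statement, that the root group is regular on $\Omega\setminus\{P\}$. Step~2 fails because its key claim is false.

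You want to show that a map $x\mapsto\lambda x^{p^k}$ with nontrivial field part cannot act as a scalar multiplication on the image of the coefficient map. Your reason is that its eigenvalues do not form a single Frobenius orbit. This fails for individual elements. Take $q=p^2$ and $\psi(x)=\lambda x^p$, where $\nu=\lambda^{p+1}$ is a nonsquare in $\F_p^\times$; such $\lambda$ exists because the norm is onto.
\begin{itemize}
\item Then $\psi^2=\nu\cdot\mathrm{id}$, and $\psi$ is not an $\F_p$-scalar, so its minimal polynomial is the irreducible $X^2-\nu$.
\item Hence $\F_{p^2}$ is a one-dimensional space over $\F_p[X]/(X^2-\nu)$.
\item So $\psi$ is $\F_p$-linearly conjugate to multiplication by $\sqrt{\nu}$ on $\F_{p^2}\subset K$. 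Its eigenvalues $\pm\sqrt{\nu}$ form one Frobenius orbit.
\end{itemize}
Lemma~\ref{lem:local} gives only an $\F_p$-linear identification of $V$ with a subgroup of $K$. Therefore the spectrum of a single element of $\bar H$ cannot rule out a field component. Your order-comparison paragraph rests on the same false premise. The counting fallback is too vague to check, and it is not clear that $m\le j(p^b-1)$ conflicts with anything once $j$ may grow.

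\textbf{The missing idea is the socle's torus inside the cyclic group $\bar H$.} The two-point stabilizer of $\bar G$ contains that of the socle, which is a torus of order $(q-1)/2$, $(q^2-1)/\mu$, or $q-1$.
\begin{itemize}
\item An element of $\bar H$ with field component $u\mapsto u^{p^j}$ acts on this torus by that power map, since the diagonal part centralizes the torus.
\item This action is nontrivial because $p^j-1$ is smaller than the torus order; for example $p^j-1\le q/p-1<(q-1)/2$ in the linear case.
\item So $\bar H$ would be nonabelian, contradicting its cyclicity. This is the paper's argument.
\end{itemize}
In your own terms, in the linear case: an operator commuting with multiplication by all squares of $\F_q^\times$ is $\F_q$-linear, because the squares generate $\F_q$. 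That forces $k\equiv 0$. The contradiction comes from commutation with the torus, not from the spectrum of one element. You noted that $\bar G_P/\bar S$ is cyclic, but you never brought the socle's torus in, and that is what makes the cyclicity decisive.
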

\begin{proof}
The group $S$ is regular on $\Omega\setminus\{P\}$. Thus a two-point
stabilizer in $G$ is a cyclic tame complement to $S$ in $I$,
and the faithful two-point stabilizer in $G/M$ is cyclic as well.
Consider a nonsmall case, and put $q=p^a$.
A field component fixing the two standard points normalizes their
torus. In the linear action it acts on the torus of order $(q-1)/2$
by $u\mapsto u^{p^j}$, for some $1\le j<a$. This is nontrivial,
since $p^j-1\le q/p-1<(q-1)/2$.
In the unitary action the corresponding torus has order
$(q^2-1)/\mu$, where $\mu=(3,q+1)$, and $1\le j<2a$.
Again the action is nontrivial, because
\[
 p^j-1\le q^2/p-1<(q^2-1)/\mu.
\]
Here $\mu=1$ if $p=3$, and $p\ge5$ if $\mu=3$.
The projective diagonal part of a two-point stabilizer centralizes
the torus, so composing with it does not alter this obstruction.
In the Ree case use $p=3$, $1\le j<a$, and the torus of order $q-1$.
Thus a nontrivial field component would make the two-point stabilizer
nonabelian, contradicting its cyclicity.
For the linear and unitary natural actions see also
\cite[Remark 5.2]{GKmany}. Only the diagonal projective extensions
$\PGL(2,q)$ and $\PGU(3,q)$ remain.

\end{proof}

\begin{lemma}\label{lem:pointbound}
One has
\[
 |G_P|>4g.
\]
\end{lemma}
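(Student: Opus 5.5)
The plan is a direct counting argument. It combines the exact orbit length \eqref{eq:n=s+1} with the global order hypothesis, and it needs neither the ramification filtration nor the structure of the almost simple quotient. By Lemma~\ref{lem:sylow} the stabilizer of the wild point is $G_P=N_G(S)=S\rtimes H$, of order $sm$. Since $G$ acts transitively on $\Omega$,
\[
 |G|=|\Omega|\cdot|G_P|=n\,sm .
\]
In the noncyclic case under consideration, \eqref{eq:n=s+1} gives $n=s+1$, so $|G|=(s+1)sm=(s+1)|G_P|$.

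I will argue by contradiction and suppose $|G_P|=sm\le 4g$. The hypothesis $|G|>24g(g-1)$ from \eqref{eq:mainhyp} then gives
\[
 24g(g-1)<(s+1)\,sm\le 4g(s+1),
\]
and hence $s+1>6(g-1)$, that is, $s\ge 6g-6$. Since $g\ge4$, we have $6g-6>4g$ (this only needs $g>3$), and so $s>4g$. Because $m\ge1$, this gives $sm\ge s>4g$, contradicting the assumption $sm\le 4g$. Therefore $|G_P|>4g$.

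The only step that needs care is the input $n=s+1$. The first attempt I would rule out is to bound $s$ directly in terms of $g$, via Hurwitz \eqref{eq:hurwitzS} and the jumps. That route fails: for the Hermitian curve $s=q^3$ grows like $g^{3/2}$, so no linear bound on $s$ holds. The argument above avoids this by playing the smallness of $sm$ against the linear size of $n=s+1$. I must make sure that \eqref{eq:n=s+1} is valid for every case still under consideration at this point. For the small Ree candidate ${}^2G_2(3)$ the degree is $28=s+1$ with $s=27$, so the same computation applies. No other input is needed.
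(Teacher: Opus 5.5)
Your argument is correct and is essentially the paper's: both proofs combine $|G|=n|G_P|$, $n=s+1$ and $s\le|G_P|$ with $|G|>24g(g-1)$. The only difference is the arrangement of the estimate. The paper writes $|G|\le 4g(4g+1)<24g(g-1)$ directly, while you first derive $s\ge 6g-6>4g$ and then contradict $s\le sm\le 4g$.
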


\begin{proof}
If $|G_P|\le4g$, then $s\le4g$ and
$n=s+1\le4g+1$.  Hence
\[
 |G|=n|G_P|\le4g(4g+1)<24g(g-1),
\]
for $g\ge4$, a contradiction.
\end{proof}

\begin{lemma}\label{lem:rational}
The quotient $\cX/S$ is rational.
\end{lemma}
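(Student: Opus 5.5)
The plan is to argue by contradiction: assume $h=g(\cX/S)\ge1$ and play the tame complement $H$ against the genus relation \eqref{eq:general2}. Under our hypotheses Corollary~\ref{cor:relations} applies, and by \eqref{eq:n=s+1} we have $|G|=n|G_P|=(s+1)sm$.

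First I bound $s$ in terms of $g$ and $h$. In \eqref{eq:general2}, i.e.\ $2g=2sh+T-s+1$, the sum $T$ contains the term $|(G_P)_1|-1=s-1$, so $T\ge s-1$. Hence $2g\ge 2sh$, that is,
\[
 s\le g/h .
\]

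Next I bound $m$ through the action of $H$ on $\cX/S$. Since $S$ is normal in $G_P=S\rtimes H$, the group $H$ acts on $K(\cX)^S$, hence on $\cX/S$, and it fixes the image $\bar P$ of $P$. The action is faithful. Indeed, an element of $H$ acting trivially on $K(\cX)^S$ lies in $\operatorname{Gal}(K(\cX)/K(\cX)^S)=S$, and $H\cap S=1$. So $H$ is a tame cyclic group fixing a point on a curve of positive genus $h$. The bound \cite[Theorem~11.60]{HKT} recalled in Section~\ref{sec:prelim} then gives
\[
 m\le 4h+2 .
\]

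Combining the two bounds,
\[
 |G|=(s+1)sm\le\Bigl(\frac gh+1\Bigr)\frac gh\,(4h+2).
\]
The right-hand side is nonincreasing in $h\ge1$: the factor $(4h+2)/h$ decreases and the other factors decrease. Its value at $h=1$ is $6g(g+1)$. Since $6(g+1)<24(g-1)$ for $g\ge2$, we get $|G|\le 6g(g+1)<24g(g-1)$, contradicting \eqref{eq:mainhyp}. Therefore $h=0$.

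The step I expect to need the most care is the faithfulness of the $H$-action on $\cX/S$ together with the applicability of the bound $m\le 4h+2$, in particular for $h=1$. There one should check that the cited theorem covers automorphisms fixing a point of an elliptic curve in characteristic $p>2$. In that case the order is at most $6$, which is consistent with $4h+2=6$. The remaining steps are short numerical estimates.
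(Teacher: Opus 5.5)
Your proof is correct and is essentially the paper's argument: Hurwitz for $S$ gives $g\ge sh$, the tame complement acts faithfully on $\cX/S$ with a fixed point, and $n=s+1$ together with $|G|>24g(g-1)$ yields the contradiction. The only difference is in how the tame part is bounded. The paper uses the two-point stabilizer $D\cong H$, which fixes two points of $\cX/S$, derives $|D|\le 4h$ directly from tame Hurwitz (splitting on whether $(\cX/S)/D$ is rational), and then invokes $|G_P|>4g$ from Lemma~\ref{lem:pointbound}. You instead quote the one-fixed-point bound $m\le 4h+2$ and absorb the extra $+2$ into the slack $6g(g+1)<24g(g-1)$.
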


\begin{proof}
Put $\cY=\cX/S$ and let $g_1=g(\cY)$.  Since $S$ fixes only $P$,
Hurwitz gives
\begin{equation}\label{eq:gsg1}
 g\ge sg_1.
\end{equation}
The group $S$ acts regularly on $\Omega\setminus\{P\}$.  Hence a
two-point stabilizer $D=G_{P,P'}$, with
$P'\in\Omega\setminus\{P\}$, is a cyclic complement of $S$ in $G_P$.
It acts faithfully on $\cY$ and fixes the images of $P$ and
$\Omega\setminus\{P\}$.

Assume $g_1>0$.  If $g(\cY/D)\ge1$, tame Hurwitz at the two fixed points
gives $g_1\ge|D|$, and therefore
\[
 g\ge s|D|=|G_P|>4g,
\]
a contradiction.  If $\cY/D$ is rational, the positive genus of $\cY$
forces another short $D$-orbit.  Since $D$ is cyclic, its length is at
most $|D|/2$, and tame Hurwitz gives
\[
 g_1\ge\frac{|D|}{4}.
\]
Thus
\[
 g\ge\frac{s|D|}{4}
 =\frac{|G_P|}{4}>g,
\]
again a contradiction.  Hence $g_1=0$.
\end{proof}

\begin{lemma}\label{lem:intersection}
Let $R\in\Delta$.  Then
\[
 G_P\cap G_R=1.
\]
Consequently $G_R$ is cyclic and acts semiregularly on $\Omega$.
\end{lemma}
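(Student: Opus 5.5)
The plan is to push everything down to the rational quotient $\cY=\cX/S$ of Lemma~\ref{lem:rational}, where a nontrivial tame automorphism has only two fixed points.

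Let $h\in G_P\cap G_R$. Since $R\in\Delta$ lies in a tame orbit, $e=|G_R|$ is prime to $p$. Hence $h$ has order prime to $p$, and $G_R=(G_R)_0$ is cyclic, because its Sylow $p$-subgroup $(G_R)_1$ is trivial. So $\langle h\rangle$ is a $p'$-subgroup of $G_P=S\rtimes H$. By Schur--Zassenhaus it lies in a complement of $S$, and all complements are $S$-conjugate. Now let $D=G_{P,P'}$ be the two-point stabilizer from the proof of Lemma~\ref{lem:rational}; it is a cyclic complement of $S$ in $G_P$. Therefore $h\in D^\sigma$ for some $\sigma\in S$, and $D^\sigma=G_{P,\sigma(P')}$ is again a two-point stabilizer with $\sigma(P')\in\Omega\setminus\{P\}$.

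Next I pass to $\cY\cong\PP^1$. The group $G_P$ normalizes $S$, so it acts on $\cY$ with kernel exactly $S$: an element of $G_P$ acting trivially on $K(\cX)^S$ lies in $\operatorname{Gal}(K(\cX)/K(\cX)^S)=S$. Hence, if $h\neq1$, its image $\bar h$ is a nontrivial automorphism of $\PP^1$ of order prime to $p$, and such an automorphism has exactly two fixed points. Two fixed points of $\bar h$ are already known: the image $\bar P$ of $P$, and the image $\bar Q$ of the $S$-orbit $\Omega\setminus\{P\}$, which is a single point because $S$ is regular there and $h$ fixes $\sigma(P')$. The image $\bar R$ of $R$ is also fixed by $\bar h$. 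It differs from $\bar P$ and $\bar Q$, since the fibres of $\cX\to\cY$ over those points are $\{P\}$ and $\Omega\setminus\{P\}$, neither of which contains $R\notin\Omega$. Thus $\bar h$ would have three fixed points on $\PP^1$, a contradiction. Hence $h=1$ and $G_P\cap G_R=1$.

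For the consequence, cyclicity of $G_R$ was noted above. For semiregularity, any $Q\in\Omega$ has the form $Q=g(P)$, so
\[
G_Q\cap G_R=g\bigl(G_P\cap G_{g^{-1}(R)}\bigr)g^{-1}.
\]
Since $g^{-1}(R)\in\Delta$, the first part applies and this intersection is trivial. Hence $G_R$ acts semiregularly on $\Omega$.

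The only delicate point is checking that $\bar h$ really fixes the distinct points $\bar P$, $\bar Q$ and $\bar R$. This rests on the regularity of $S$ on $\Omega\setminus\{P\}$, which is where \eqref{eq:n=s+1} is used, and on the faithfulness of $G_P/S$ on $\cY$.
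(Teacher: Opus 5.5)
Your proof is correct and is essentially the paper's argument. The $p'$-element lies in an $S$-conjugate of the cyclic two-point stabilizer, so it fixes the images of $\{P\}$, $\Omega\setminus\{P\}$ and $S(R)$ on $\cX/S\cong\PP^1$; since the kernel of $G_P$ on $\cX/S$ is $S$, this forces it to be trivial. The only cosmetic differences are that you use ``a nontrivial $p'$-automorphism of $\PP^1$ has exactly two fixed points'' where the paper uses ``three fixed points force the identity,'' and that you write out the conjugation step behind semiregularity on $\Omega$ explicitly.
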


\begin{proof}
Since $\Delta$ is tame, any element of $G_R$ has order prime to $p$.
Suppose $1\ne\alpha\in G_P\cap G_R$.  Inside the solvable group
$G_P=S\rtimes D$, a $p'$-element lies in a conjugate of the cyclic
complement.  Hence $\alpha$ fixes, besides $P$, another point of
$\Omega\setminus\{P\}$.  It also fixes $R$.

The element $\alpha$ normalizes $S$, hence induces an automorphism of
$\cX/S\cong\PP^1$.  It fixes the images of the three distinct
$S$-orbits
\[
 \{P\},\qquad \Omega\setminus\{P\},\qquad S(R).
\]
Therefore the induced automorphism of $\PP^1$ is trivial.  The kernel of
the action of $G_P$ on $\cX/S$ is $S$, so $\alpha\in S$.  Since
$\alpha$ has order prime to $p$, this is impossible.

Thus $G_P\cap G_R=1$.  A tame point stabilizer is cyclic, and the
intersection statement shows that no nontrivial element of $G_R$ fixes a
point of $\Omega$.
\end{proof}

\begin{proposition}[The genus formula]\label{prop:master}
Put
\[
 c=\frac{|G_P|}{s},
 \qquad
 e=|G_R|,
 \qquad
 n=s+1.
\]
Then
\begin{equation}\label{eq:master}
  2g-2=n\left(\frac{c}{e}-1\right).
\end{equation}
\end{proposition}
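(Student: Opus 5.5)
Since $\cX/G$ is rational with exactly two short orbits $\Omega$ and $\Delta$ (Theorem~\ref{thm:two}), I plan to write Riemann--Hurwitz for two different covers and eliminate the unknown wild term $T$. Here $|G_P|=sc$ and $m=c$.

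First, Hurwitz for $G$ (Corollary~\ref{cor:relations}, \eqref{eq:general1}) reads
\[
 2g-2=n\Bigl(T-1-\frac{sc}{e}\Bigr).
\]
Second, Hurwitz for the $S$-cover, using $h=0$ from Lemma~\ref{lem:rational}, gives by \eqref{eq:hurwitzS}
\[
 2g-2=-s-1+T=T-n .
\]
The step requiring most care is obtaining a third relation from the cover $\cX\to\cX/G_P$. By Lemma~\ref{lem:rational}, $\cX/G_P$ is a quotient of $\cX/S\cong\PP^1$ by the cyclic group $D\cong C_c$, and is therefore rational. The short $G_P$-orbits are as follows.
\begin{itemize}
\item $\{P\}$, with stabilizer $G_P$ and different exponent $T+c-1$.
\item $\Omega\setminus\{P\}$, of length $s$, on which $S$ acts regularly. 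Its stabilizers are conjugates of $D$, so they are tame of order $c$.
\item Orbits coming from $\Delta$. By Lemma~\ref{lem:intersection}, $G_P\cap G_R^g=1$, so $G_P$ acts semiregularly on $\Delta$, and these orbits contribute nothing.
\end{itemize}
Every other point of $\cX$ has trivial $G$-stabilizer. Hurwitz for $G_P$ then gives
\[
 2g-2=-2sc+(T+c-1)+s(c-1)=T-sc+c-s-1 .
\]
Comparing this with $2g-2=T-s-1$ forces $c(1-s)=0$. That is absurd, since $s>1$ and $c\ge1$.

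So I have miscounted, and the error is in the $G_P$-cover. The map $\cX/S\to\cX/G_P$ is the tame $C_c$-cover $\PP^1\to\PP^1$, which is totally ramified at exactly two points: the image of $P$ and the image of $\Omega\setminus\{P\}$. The $c$ subgroups $D$ (one per point of $\Omega\setminus\{P\}$) are consistent with this, so the orbit structure above is right. The arithmetic is what I have to redo:
\[
 -2sc+(T+c-1)+s(c-1)=T-sc+c-1-s .
\]
The count itself is correct, so the contradiction must be real unless the right-hand side can agree with $T-s-1$. Agreement needs $c=sc$, which is false. The discrepancy is the missing term: the different exponent of $\{P\}$ in the $G_P$-cover is $\sum_{i\ge0}(|(G_P)_i|-1)=(sc-1)+\sum_{i\ge1}(|(G_P)_i|-1)$. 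The $i=0$ term is $sc-1$, not $c-1$.

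With that correction,
\[
 2g-2=-2sc+(sc-1+T)+s(c-1)=T-s-1 ,
\]
which is consistent with the $S$-cover. The $G_P$-cover therefore gives no new information, and the two independent relations are the $G$-relation and the $S$-relation.

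Substituting $T=2g-2+n$ into the $G$-relation gives
\[
 x=n\Bigl(x+n-1-\frac{sc}{e}\Bigr),
\]
so
\[
 x(n-1)=n\Bigl(\frac{sc}{e}-s\Bigr).
\]
Since $n-1=s$, dividing by $s$ gives $2g-2=n(c/e-1)$, which is \eqref{eq:master}.

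So the proof reduces to three inputs: \eqref{eq:general1}, \eqref{eq:hurwitzS} with $h=0$ from Lemma~\ref{lem:rational}, and $n=s+1$ from \eqref{eq:n=s+1}. The only potential obstacle is the bookkeeping of $T$. It is defined as the sum over $i\ge1$ and appears in the same form in both formulas, so it cancels cleanly.
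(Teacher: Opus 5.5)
Your final argument is correct and is essentially the paper's: eliminating $T$ between \eqref{eq:general1} (with $m=c$) and \eqref{eq:hurwitzS} with $h=0$ is the same computation as the paper's comparison of differents, $d_P=2g-2+s+|G_P|$, followed by Hurwitz for $G$ with tame contribution $|G|(1-1/e)$. The detour through the $G_P$-cover is redundant; its apparent contradiction came only from dropping the $i=0$ term $sc-1$ of the different, so that part should simply be deleted.
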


\begin{proof}
By Theorem~\ref{thm:two}, the only short orbits are $\Omega$ and
$\Delta$.  Since $\cX/S$ is rational, comparison of the different for
$S$ and for $G_P$ gives
\[
 d_P=2g-2+s+|G_P|.
\]
The tame short orbit contributes $|G|(1-1/e)$.  Using
$|G|=n|G_P|=nsc$ in Hurwitz and simplifying gives
\eqref{eq:master}.
\end{proof}

\subsection{The central kernel}\label{sec:kernel}

Retain the hypotheses of Section~\ref{sec:noncyclic}, and put
\[
 M=Z(G_P)=O_{p'}(G),\qquad r=|M|.
\]
Write
\[
 c=r\bar c.
\]

\begin{theorem}[Central kernel and quotient genus]\label{thm:kernel}
The group $M$ is cyclic, is the kernel of the action of $G$ on $\Omega$,
and is contained in $Z(G)$.  If
\[
 \widehat g=g(\cX/M),
\]
then
\begin{align}
 2g-2
 &=n\left(\frac{r\bar c}{e}-1\right),\label{eq:masterkernel}\\
 2\widehat g-2
 &=n\left(\frac{\bar c}{e}-1\right),\label{eq:descent}\\
 2g-2
 &=r(2\widehat g-2)+n(r-1).\label{eq:tamecover}
\end{align}
If $r>1$, the cover $\cX\to\cX/M$ is totally ramified at every point of
$\Omega$ and unramified elsewhere.  Moreover,
\begin{equation}\label{eq:rdivn}
 r\mid n.
\end{equation}
\end{theorem}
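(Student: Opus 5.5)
First I identify $M$ with the kernel of the action on $\Omega$. By Theorem~\ref{thm:GMPprelim}, $M=Z(G_P)=O_{p'}(G)$ is normal in $G$ and $G/M$ acts faithfully in its natural rank-one action on the Sylow subgroups, hence on $\Omega$. So the kernel $N$ of $G\to\operatorname{Sym}(\Omega)$ contains $M$ and $N/M$ is trivial, giving $N=M$. Next I show that $M$ is central. Because $M$ is normal and fixes every point of $\Omega$, for each $Q\in\Omega$ we have $M\le G_Q$, and $M$ is a $p'$-subgroup of $G_Q=S_Q\rtimes H_Q$. Now $M=Z(G_P)$, and conjugating gives $M=Z(G_Q)$ for every $Q$, because $M$ is normal and $Z(G_P)^g=Z(G_Q)$. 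Since $G$ is generated by the stabilizers $G_Q$ (it is transitive on $\Omega$ with $G_P\ne G_Q$; equivalently, $G/M$ is generated by its point stabilizers because the socle is simple), $M$ commutes with a generating set and so lies in $Z(G)$. For cyclicity, $M\le G_P$ consists of $p'$-elements fixing $P$, so $M$ embeds in $G_P/S\cong C$, which is cyclic. One could equally use the faithful tangent character at $P$.

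Next I treat ramification. A nontrivial $\alpha\in M$ fixes every point of $\Omega$. It fixes no point $R\in\Delta$, since $G_P\cap G_R=1$ by Lemma~\ref{lem:intersection}. Points outside $\Omega\cup\Delta$ have trivial stabilizers. Hence $\cX\to\cX/M$ is tamely ramified exactly at $\Omega$. Because $M\le G_Q$ for each $Q\in\Omega$ and $M$ fixes $Q$, the ramification is total there, with index $r$. Tame Riemann--Hurwitz then gives \eqref{eq:tamecover} directly: $2g-2=r(2\widehat g-2)+n(r-1)$. Formula \eqref{eq:masterkernel} is just \eqref{eq:master} with $c=r\bar c$. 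Substituting \eqref{eq:masterkernel} into \eqref{eq:tamecover} and dividing by $r$ gives
\[
2\widehat g-2=\frac{n}{r}\Bigl(\frac{r\bar c}{e}-1-(r-1)\Bigr)=n\Bigl(\frac{\bar c}{e}-1\Bigr),
\]
which is \eqref{eq:descent}. The same identity can also be checked by applying Proposition~\ref{prop:master} to $G/M$ acting on $\cX/M$. Here $\bar c=|G_P/M|/s$, and the image of $G_R$ has order $e$ because $G_R\cap M=1$.

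Finally I prove $r\mid n$, and I expect this step to be the main obstacle. I would look at the cyclic cover $\cX\to\cX/M$ with group $M\cong C_r$. By Kummer theory, $K(\cX)=K(\cX/M)(z)$ with $z^r=f$, and total ramification at the images $\bar Q$ of the points of $\Omega$, together with no ramification elsewhere, forces $\gcd(v_{\bar Q}(f),r)=1$ at those $n$ points and $r\mid v_B(f)$ elsewhere. The difficulty is that this alone only gives $\sum v(f)=0$, i.e. $r\mid\sum_{\bar Q}v_{\bar Q}(f)$, not $r\mid n$. I would exploit the $G/M$-symmetry. The group $G/M$ acts on $H^1$ and permutes the branch points transitively, and since $M$ is central, conjugation fixes a generator of $M$. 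So the local character at each $\bar Q$ is the same, and one can normalize $v_{\bar Q}(f)\equiv a \pmod r$ with $a$ fixed and prime to $r$, using that the inertia generator acts on $z$ by the same root of unity at every point. Then $r\mid na$, and with $\gcd(a,r)=1$ this gives $r\mid n$. Centrality of $M$ is what ties the local characters at the different points together, so making this equivariance argument rigorous is the crux.
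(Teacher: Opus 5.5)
Your proposal is correct and follows essentially the paper's route. You get cyclicity from $M\hookrightarrow G_P/S$, centrality from $M=Z(G_Q)$ for all $Q$ together with maximality of $G_P$ (equivalently, generation by point stabilizers), the three genus formulas from tame Riemann--Hurwitz using $G_R\cap M=1$, and $r\mid n$ from equal local characters of a central generator plus $\deg\operatorname{div}(f)=0$. The one variation is that you identify the kernel on $\Omega$ through faithfulness of the natural action of $G/M$, whereas the paper argues directly that a normal $p'$-subgroup fixing $\Omega$ commutes with $S$ and so lies in $Z(G_P)$. The step you flag as the crux is made rigorous by $\chi_{gQ}(g\sigma g^{-1})=\chi_Q(\sigma)$ together with $\zeta=\chi_Q(\sigma)^{v_Q(z)}$ at each totally ramified $Q$.
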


\begin{proof}
Since $M\le Z(G_P)$ is a $p'$-group and $G_P/S$ is cyclic, $M$ is cyclic.
Normality shows that $M$ fixes every point of $\Omega$.

Let $K$ be the kernel of the action on $\Omega$.  It is a normal
$p'$-subgroup: a nontrivial $p$-element cannot fix all points of
$\Omega$.  Since $K$ and $S$ are normal subgroups of $G_P$ of coprime
orders, they commute.  The group $K$ lies in a tame complement of
$G_P$, and that complement is cyclic.  Hence $K\le Z(G_P)=M$.
Therefore $K=M$.

The action on $\Omega$ is doubly transitive, so $G_P$ is maximal in $G$.
The normal subgroup $C_G(M)$ contains $G_P$.  It cannot equal $G_P$,
because a point stabilizer in a nontrivial doubly transitive action is not
normal.  Hence $C_G(M)=G$, and $M\le Z(G)$.

By Lemma~\ref{lem:intersection}, $G_R\cap M=1$.  Therefore $M$ acts
freely outside $\Omega$, while every point of $\Omega$ is totally
ramified in $\cX\to\cX/M$.  Tame Hurwitz gives
\eqref{eq:tamecover}.  Equation \eqref{eq:masterkernel} is
Proposition~\ref{prop:master}.  Combining it with
\eqref{eq:tamecover} gives \eqref{eq:descent}.

Finally, assume $r>1$.  By Kummer theory write
\[
 K(\cX)=K(\cX/M)(z),\qquad z^r=f,
\]
and let a generator of $M$ act by $z\mapsto\zeta z$.  At the branch
points write $a_i=v_{B_i}(f)$.  Total ramification gives
$\gcd(a_i,r)=1$.  Centrality of $M$ and transitivity on $\Omega$ imply
that the local tangent character of the generator is the same at every
branch point; equivalently, the $a_i$ are congruent modulo $r$ after a
fixed choice of Kummer generator.  At unramified points the valuation of
$f$ is divisible by $r$.  Taking the degree of $\operatorname{div}(f)$
gives
\[
 na_1\equiv0\pmod r.
\]
Since $\gcd(a_1,r)=1$, one obtains $r\mid n$.
\end{proof}

\begin{remark}
The quotient-genus formula \eqref{eq:descent} is independent of $r$.
Thus enlarging the central cyclic kernel cannot repair a negative value of
$\widehat g$.  This observation eliminates some numerical branches which
would survive if one considered only the genus formula for $\cX$.
\end{remark}

\begin{remark}[The small Ree group]\label{rem:smallree}
For ${}^2G_2(3)\cong\operatorname{P\Gamma L}(2,8)$ the data are
$n=28$, $s=27$, and $\bar c=2$.
All involutions lie in $\PSL(2,8)$ and form one conjugacy class.
The Sylow normalizer, of order $54$, contains an involution;
therefore every involution fixes a point of the Sylow orbit.
By Lemma~\ref{lem:intersection}, the tame inertia has odd order
dividing $28$, so $e=7$.
Equation~\eqref{eq:descent} gives $2\widehat g-2=-20$.
Thus the small Ree candidate does not occur.
\end{remark}

\subsection{The possible inertia orders}\label{sec:rankone}

\subsubsection*{Linear groups.}

Assume that the induced group is $\PSL(2,q)$, with $q$ odd.  Then
\[
 n=q+1,\qquad s=q,\qquad
 \bar c=\frac{q-1}{2}.
\]
A cyclic subgroup acting semiregularly on the natural orbit has order
dividing $(q+1)/2$ \cite{GKsemi}.  If
\[
 e=\frac{q+1}{2t},
\]
then \eqref{eq:masterkernel} gives
\begin{equation}\label{eq:PSL}
 2g=(q-1)(rt-1).
\end{equation}
For $\PGL(2,q)$ one has $\bar c=q-1$ and $e\mid q+1$;
writing $e=(q+1)/t$ gives the same formula, now with $t\mid q+1$.
For $\PSL(2,q)$ the condition is $t\mid(q+1)/2$.

\subsubsection*{Unitary groups.}

Let $G/M$ be either $\PSU(3,q)$ or $\PGU(3,q)$ in its natural
action, with $q\ge3$ odd. Put $\mu=(3,q+1)$ and let $\delta=\mu$
or $1$, respectively. Then
\[
 n=q^3+1,\qquad s=q^3,\qquad \bar c=(q^2-1)/\delta.
\]
For $\PSU(3,q)$ the semiregular cyclic orders are given by
\cite[Proposition~3.2]{GKsemi}. For $\PGU(3,q)$, the semisimple
element types in \cite[Lemma~2.2]{MZ} give the following alternatives for a
cyclic group without fixed points on the natural unital: its
order divides $q+1$, or it lies in a Singer torus of order
$q^2-q+1$. The other semisimple types fix isotropic points.
Since an involution fixes isotropic points, the first order is
odd and hence divides $(q+1)/2$. Thus in both cases
\[
 e\mid(q+1)/2\qquad\text{or}\qquad
 e\mid(q^2-q+1)/\delta.
\]

\begin{proposition}\label{prop:unitary}
Under the large-group hypothesis \eqref{eq:mainhyp}, the branch
\[
 e\mid\frac{q+1}{2}
\]
is impossible.  Hence only
\[
 e\mid\frac{q^2-q+1}{\delta}
\]
can occur.
\end{proposition}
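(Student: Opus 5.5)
The plan is to argue by contradiction using the genus formula \eqref{eq:masterkernel} of Theorem~\ref{thm:kernel}, and to show that the branch $e\mid(q+1)/2$ gives a genus so large that $|G|>24g(g-1)$ must fail. No new geometric input should be needed.

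\textbf{Setting up.} Write $e=(q+1)/(2t)$ with $t\ge1$ an integer, and use the unitary data $n=q^3+1$, $s=q^3$, $\bar c=(q^2-1)/\delta$. Then
\[
 \frac{r\bar c}{e}=\frac{2rt(q-1)}{\delta}.
\]
Put $B=r(q-1)/\delta$ and $A=2tB\ge 2B$. Equation \eqref{eq:masterkernel} becomes
\[
 2g-2=(q^3+1)(A-1),
\]
and the group order is
\[
 |G|=r\,n\,s\,\bar c=(q^3+1)\,q^3(q+1)\,B .
\]

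\textbf{The key inequality.} From $g>g-1=n(A-1)/2$ we get $24g(g-1)>6n^2(A-1)^2$. So it suffices to prove
\[
 q^3(q+1)B\le 6(q^3+1)(A-1)^2 .
\]
Since $A-1\ge 2B-1>0$ and $q^3+1>q^3$, it is enough to show
\[
 (q+1)B\le 6(2B-1)^2 .
\]
This is where $B$ has to be bounded below in terms of $q$. From $r\ge1$ and $\delta\le3$ we have $B\ge(q-1)/3$, that is, $q+1\le 3B+2$. The target inequality then reduces to
\[
 (3B+2)B\le 24B^2-24B+6,
 \qquad\text{equivalently}\qquad
 21B^2-26B+6\ge0 .
\]
This quadratic holds for $B\ge 1$, since its larger root is about $0.93$.

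\textbf{Checking the range of $B$.} Here the value of $\delta$ matters.
\begin{itemize}
\item If $\delta=1$, then $B\ge q-1\ge2$.
\item If $\delta=3$, then $q\equiv2\pmod 3$ with $q$ odd, so $q\ge5$ and $B\ge4/3$.
\end{itemize}
In both cases $B>1$, so the quadratic inequality holds. Hence $|G|<24g(g-1)$, contradicting \eqref{eq:mainhyp}.

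Therefore only the alternative $e\mid(q^2-q+1)/\delta$, taken from the list of semiregular orders established just before the statement, can occur.

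The main obstacle is simply keeping the bookkeeping uniform in $r$ and $t$. The right-hand side grows quadratically in $B$ and $t$ while $|G|$ grows only linearly in $B$, so larger kernels or smaller $e$ only strengthen the contradiction. The worst case is therefore $r=t=1$ with $\delta=3$, and that case is exactly what the check $B\ge4/3$ covers.
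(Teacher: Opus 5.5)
Your proof is correct and is essentially the paper's argument: substitute $e=(q+1)/(2t)$ into \eqref{eq:masterkernel}, bound $g$ from below, and show that $24g(g-1)$ then exceeds $|G|$. The only difference is in how the final inequality is organized: the paper proves $|G|/g^2<18$ and then needs $g\ge4$, whereas you compare $|G|$ with $24(g-1)^2$ and reduce to $21B^2-26B+6\ge0$, which avoids using $g\ge4$ at this step.
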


\begin{proof}
Write
\[
 e=\frac{q+1}{2t}.
\]
Equation \eqref{eq:masterkernel} gives
\[
 2g
 =
 (q-1)\left(
 \frac{2rt(q^3+1)}{\delta}-(q^2+q+1)
 \right).
\]
For $q=3$, use $\delta=1$; for $q\ge5$, use $\delta\le3$. Thus
\[
 \frac{q^3+1}{\delta}\ge q^2+q+1.
\]
Hence
\[
 g\ge
 \frac{r(q-1)(q^3+1)}{2\delta}.
\]
Since
\[
 |G|
 =
 \frac{r(q^3+1)q^3(q^2-1)}{\delta},
\]
we obtain
\[
 \frac{|G|}{g^2}
 \le
 \frac{4\delta q^3(q+1)}
 {r(q-1)(q^3+1)}
 <\frac{4\delta(q+1)}{r(q-1)}
 \le18.
\]
For $g\ge4$,
\[
 18g^2\le24g(g-1),
\]
contradicting \eqref{eq:mainhyp}.
\end{proof}

For the surviving Singer-divisor branch, writing
\[
 e=\frac{q^2-q+1}{\delta t}
\]
gives the necessary genus relation
\begin{equation}\label{eq:PSUsinger}
 2g
 =
 (q-1)\bigl(rt(q+1)^2-(q^2+q+1)\bigr).
\end{equation}
The corresponding equations are determined in Section~\ref{sec:unitarycomplete}.

\subsubsection*{Ree groups.}

Assume
\[
 G/M\cong{}^2G_2(q),
 \qquad
 q=3q_0^2,\qquad
 q_0=3^a,\ a\ge1.
\]
Then
\[
 n=q^3+1,\qquad
 s=q^3,\qquad
 \bar c=q-1.
\]
By \cite[Proposition~5.2]{GKsemi}, a cyclic semiregular subgroup has
order dividing
\[
 \frac{q+1}{2},
 \qquad
 q-3q_0+1,
 \qquad
 q+3q_0+1.
\]
Every involution fixes points of the natural orbit, so the order $e$ is
odd.  Since $q\equiv3\pmod8$, the first possibility improves to
\[
 e\mid\frac{q+1}{4}.
\]

\begin{proposition}\label{prop:ree}
The value
\[
 e=q+3q_0+1
\]
is impossible.
\end{proposition}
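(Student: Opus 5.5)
The plan is to use the quotient-genus relation \eqref{eq:descent} from Theorem~\ref{thm:kernel}. As the Remark after that theorem stresses, this relation does not depend on the order $r$ of the central kernel $M$. So the value of $e$ alone should force $\widehat g=g(\cX/M)$ to be negative, and we never have to determine $r$.

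Concretely, in the Ree case we have $n=q^3+1$ and $\bar c=q-1$. Substituting $e=q+3q_0+1$ into \eqref{eq:descent} gives
\[
 2\widehat g-2
 = (q^3+1)\left(\frac{q-1}{q+3q_0+1}-1\right)
 = -\frac{(q^3+1)(3q_0+2)}{q+3q_0+1}.
\]
Since $\widehat g\ge0$, the left-hand side is at least $-2$. So it suffices to check that
\[
 (q^3+1)(3q_0+2)>2(q+3q_0+1).
\]
This is immediate: $q^3+1>2(q+3q_0+1)$ already holds for $q\ge27$, and $3q_0+2\ge 11$. This contradiction rules out $e=q+3q_0+1$.

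The only points needing care are the inputs to \eqref{eq:descent}, which I take from Section~\ref{sec:kernel}. The first is $\bar c=q-1$, meaning the two-point stabilizer of $\Ree(q)$ in its natural action has order $q-1$; this uses Proposition~\ref{prop:outer}, which excludes field automorphisms. The second is that $G_R$ acts semiregularly on $\Omega$ and meets $M$ trivially, by Lemma~\ref{lem:intersection}, so that $e$ is also the order of the image of $G_R$ in $G/M$. Once these are granted there is no real obstacle; the whole argument is the observation that $e>\bar c$ makes the right-hand side of \eqref{eq:descent} of order $-n$.

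The same computation shows more generally that any $e$ with $e>\bar c=q-1$ is excluded. In particular it excludes every divisor of $q+3q_0+1$ exceeding $q-1$; the exact value stated in the proposition is one such divisor.
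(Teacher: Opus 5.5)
Your proposal is correct and follows the paper's proof. Both substitute $e=q+3q_0+1$ into the descent formula \eqref{eq:descent} with $n=q^3+1$ and $\bar c=q-1$, and conclude that $2\widehat g-2<-2$. The paper additionally uses $(q+3q_0+1)(q-3q_0+1)=q^2-q+1$ to rewrite your value as the integer $-(q+1)(q-3q_0+1)(3q_0+2)$, whereas you bound the fraction directly; either way suffices.
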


\begin{proof}
Put
\[
 e_+=q+3q_0+1,\qquad e_-=q-3q_0+1.
\]
Since
\[
 e_+e_-=q^2-q+1
\]
and $n=(q+1)(q^2-q+1)$, the descent formula
\eqref{eq:descent}, with $e=e_+$, gives
\[
 2\widehat g-2
 =
 -(q+1)e_-(3q_0+2)<-2,
\]
which is impossible.
\end{proof}

\section{Cyclic Sylow subgroups and the structural alternatives}\label{sec:cyclic}
Retain \eqref{eq:mainhyp} and the absence of a global fixed point.
The two-orbit theorem applies. We use the normalized form of
\eqref{eq:general1}:
\begin{equation}\label{eq:twoRH}
 \frac{x}{|G|}=\frac{T-1}{sm}-\frac1e.
\end{equation}

\subsection{Higher cyclic Sylow subgroups}

\begin{proposition}\label{prop:highcyclic}
Under \eqref{eq:mainhyp}, a cyclic Sylow $p$-subgroup of $G$ has order $p$.
\end{proposition}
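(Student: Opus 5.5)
Assume $S$ is cyclic of order $s=p^k$ with $k\ge2$; we aim for a contradiction with $|G|>24g(g-1)$. Everything takes place at the wild point $P=P_S$, where $G_P=S\rtimes H$ with $|H|=m$. Since $S$ is cyclic, each quotient $(G_P)_i/(G_P)_{i+1}$ has order $1$ or $p$. So the lower filtration of $S$ has exactly $k$ jumps $j_1<j_2<\dots<j_k$, and the ramification groups between consecutive jumps have orders $p^k,p^{k-1},\dots,p$. Hence
\[
 T=(p^k-1)j_1+(p^{k-1}-1)(j_2-j_1)+\dots+(p-1)(j_k-j_{k-1}).
\]
Lemma~\ref{lem:local} with $b=1$ gives $m\mid j_1(p-1)$, so $m\le j_1(p-1)$. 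The cyclic jump inequality \eqref{eq:cyclicjumpprelim} gives $j_2\ge(p^2-p+1)j_1$. Keeping only the second term of $T$, we get $T\ge(p^{k-1}-1)(p^2-p)j_1$, which is of order $s\,p\,j_1$. This is much larger than $sm/(p-1)$.

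Next we bound $|G|$. By \eqref{eq:general1} and the inequality $T-1-sm/e\ge T-1-sm/2$, we have $x/n\ge T-1-sm/2$. The lower bound on $T$ dominates $sm/2\le s(p-1)j_1/2$, so $x/n\ge cT$ for an absolute constant $c$, say $c=1/2$; checking this is routine but needs care when $p=3$. Then
\[
 |G|=nsm\le \frac{x}{cT}\,s\,j_1(p-1).
\]
Since $T\gtrsim s p j_1$, the factors $s$ and $j_1$ cancel, leaving $|G|\lesssim x(p-1)/(cp)$, which is $O(g)$. This is impossible because $|G|>24g(g-1)$ and $g\ge4$.

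A cleaner route may be to use \eqref{eq:hurwitzS}, namely $x\ge T-s-1$, together with $n\le ?$. I would instead bound $n$ directly from \eqref{eq:general1}: $n=x/(T-1-sm/e)$, and then show $T-1-sm/e\ge T/2$.

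The main obstacle is the lower bound $T-1-sm/e\gtrsim T$ in the worst case $e=2$ and $p=3$. There $sm/2$ can be as large as $s\,j_1$, while the $j_2$ contribution is $(p^{k-1}-1)(p-1)(j_2-j_1)\ge(s/3-1)\cdot 2\cdot 6j_1$, roughly $4sj_1$. That leaves enough room, but the first term $(s-1)j_1$ should also be used. If this margin fails in some case, the fallback is the upper-jump integrality from Hasse--Arf applied to the quotient of $S$ of order $p^2$, combined with the fact that $H$ acts on $S/\Phi(S)$ and on the next factor by $\zeta^{j_1}$ and $\zeta^{j_2}$. This forces $m\mid j_1(p-1)$ and $m\mid j_2(p-1)$, and should sharpen the estimate enough.
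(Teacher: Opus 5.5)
Your argument is essentially the paper's proof. You bound $m\le j_1(p-1)$ by the local tame-complement bound. You bound $T$ from below through the first two lower jumps, using $j_2\ge(p^2-p+1)j_1$. You then feed this into the two-orbit Hurwitz relation \eqref{eq:general1} and conclude $|G|<4(g-1)$.

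The inequality you left as ``routine'' is exactly
\[
 T-sm\ge j_1(s-p^2+p-1)\ge 2.
\]
It needs both the $j_1$-term and the $j_2$-term of $T$. The lower bound $(s-p)(p-1)j_1$ for the $j_2$-term alone falls short of the bound $s(p-1)j_1$ on $sm$. Also, your $p=3$ check carries a spurious factor $p-1$: the $j_2$-term is about $2sj_1$, not $4sj_1$. Once the inequality is in place, $x/|G|>1-1/e\ge1/2$ follows immediately, and no Hasse--Arf fallback is needed.
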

\begin{proof}
Suppose $S=C_{p^a}$, $a\ge2$, and let $j_1<j_2$ be its first two
lower jumps. The upper jumps of a cyclic $p$-extension satisfy
$u_2\ge p u_1$; see \cite[Theorem 1.1, with tame part trivial]{OP}.
Converting to lower numbering gives
\[
 j_2=j_1+p(u_2-u_1)\ge(p^2-p+1)j_1.
\]
The lower groups for $S$ are obtained by intersection with those for
$I$; in particular the first graded quotient has order $p$.
Thus $m\le j_1(p-1)$ and
\begin{align*}
 T&\ge j_1(s-1)+(j_2-j_1)(s/p-1)\\
  &\ge j_1(ps-p^2+p-1).
\end{align*}
Consequently
\[
 T-sm\ge j_1(s-p^2+p-1)\ge p-1\ge2.
\]
Equation~\eqref{eq:twoRH} now gives
\[
 \frac{x}{|G|}>1-\frac1e\ge\frac12,
\]
so $|G|<4(g-1)$, a contradiction.
\end{proof}

\subsection{Prime-order Sylow data}
Assume $S=C_p$. Its sole lower jump $j$ is positive and prime to $p$.
Put $J=j(p-1)$. Equations~\eqref{eq:hurwitzS}--\eqref{eq:twoRH} become
\begin{equation}\label{eq:prime}
 g=ph+\frac{(p-1)(j-1)}2,\qquad m\mid J,\qquad
 \frac{|G|}{x}=\frac{pm e}{e(J-1)-pm}.
\end{equation}
The denominator is positive. The large-group hypothesis is exactly
$|G|/x>12g$.

\subsection{Positive quotient genus: a finite reduction}
\begin{lemma}\label{lem:positiveh}
If $h\ge1$, the only possible numerical data are
\[
 (p,h,j,m,e,g,|G|)=(5,2,1,4,7,10,2520).
\]
\end{lemma}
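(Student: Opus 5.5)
The plan is to convert the large-group hypothesis into a very tight Diophantine inequality on $(p,h,j,m,e)$ using \eqref{eq:prime}, and then to reduce to a finite list by combining it with the tame-cyclic bound. The tame ingredient is the following. The group $G_P/S\cong C_m$ acts faithfully on $\cX/S$, which has genus $h\ge1$, and it fixes the image of $P$. By \cite[Theorem~11.60]{HKT} we therefore get
\[
 m\le 4h+2 .
\]
Put $k=e(J-1)-pm$. This is a positive integer by \eqref{eq:prime}. Using $g\ge ph$ from \eqref{eq:prime}, the condition $|G|/x>12g$ becomes
\[
 \frac{pme}{k}>12g\ge 12ph,
 \qquad\text{so}\qquad
 \frac{k}{e}<\frac{m}{12h}\le\frac{4h+2}{12h}\le\frac12 .
\]
Hence
\[
 0<J-1-\frac{pm}{e}<\frac{m}{12h},
\]
and in particular $e>12hk/m\ge 2k$.

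The next step is a case split on the index $J/m$, where $J=j(p-1)$ and $m\mid J$.

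\emph{Case $J/m\ge2$.} Then $J-1\ge 2m-1$, while $pm/e$ is at most $J-1$. Combining $e>pm/(J-1)$ with $J-1<pm/e+1/2$ forces $e$ to be close to $pm/(J-1)\le p/2$. One then uses $e\ge2$ and the sharp form of the inequality $|G|/x>12g$, now with the full value
\[
 g=ph+\frac{(p-1)(j-1)}{2},
\]
rather than just $g\ge ph$. The aim is to show that the left side is too small. I expect this to exclude the case outright, or to leave only tiny values of $p$ and $j$.

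\emph{Case $m=J=j(p-1)$.} This is the main one. Here $m\le 4h+2$ gives
\[
 j(p-1)\le 4h+2 .
\]
The defining relation reads $e(J-1)=pJ+k$. So $e$ is just above $pJ/(J-1)$, namely $e\in\{p+1,p+2,\dots\}$, with $k$ small. Substituting into $pJe/k>12g$ and using $g\ge ph\ge p(J-2)/4$ yields an upper bound on $J$ and $p$. This leaves a short finite list of tuples $(p,h,j,m,e,k)$.

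To prune the finite list I will use the orbit data. From Corollary~\ref{cor:relations} and $|G|=npm$ we get
\[
 n=\frac{xe}{k}.
\]
This must be an integer with $n\equiv1\pmod p$ and $n\ge p+1$. In addition, the tame cyclic bound for $C_m$ on a curve of genus $h$ is not attained for every $h$. For instance, when $h=1$ we need $m\in\{2,3,4,6\}$, with the $j$-invariant restrictions in characteristics $5$ and $7$ recalled in the preliminaries. For each survivor I will also check $|G|>24g(g-1)$ directly.

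I expect the main obstacle to be this final sieving. The inequality alone leaves several near-extremal tuples, such as $j=1$ with $m=p-1$ and $e=p+2$, and each has to be killed by the congruence $n\equiv1\pmod p$ or by integrality. What should remain is exactly $p=5$, $h=2$, $j=1$, $m=4$, $e=7$, $k=1$. This gives $g=10$, $x=18$, $n=126$ and $|G|=2520$.
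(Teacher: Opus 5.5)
\paragraph{Where the proposal breaks.} Your treatment of the case $m=J$ is sound in outline and reaches essentially the paper's finite list. The sieve you propose for that list, however, cannot work. For $j=1$ the congruence $n\equiv1\pmod p$ carries no information. Indeed, Hurwitz for $G$ reads $x=n(J-1)-|\Delta|$ with $p\mid|\Delta|=|G|/e$, while $x=2ph-2\equiv J-1\equiv-2\pmod p$, so the congruence holds automatically.

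Concretely, your inequality leaves $(p,h,m,e)=(5,1,4,7)$, $(5,2,4,7)$ and $(7,1,6,9)$. It also leaves $(3,1,2,7)$, which has $g=3$ and is removed only by the standing hypothesis $g\ge4$. The first three give
\begin{itemize}
\item $n=56,126,36$, all integral and $\equiv1\pmod p$;
\item $|G|=1120,2520,1512$, exceeding $24g(g-1)=480,2160,1008$;
\item $m=4,6$, both allowed on an elliptic curve.
\end{itemize}
The case $(7,1,6,9)$ even has exactly the local data of $\operatorname{P\Gamma L}(2,8)$: $36$ Sylow $7$-subgroups, normalizer $7{:}6$, and elements of order $9$. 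So no counting or group-theoretic test will remove it.

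What is missing is an arithmetic input. Since $S$ has a single fixed point and $\gamma(\cX)=0$, Deuring--Shafarevich forces $\gamma(\cX/S)=0$. But an elliptic curve with a point-fixing automorphism of order $4$ in characteristic $5$ has $j=1728$, so it is $v^2=u^3-u$, with Hasse invariant $-2$. In characteristic $7$, order $6$ gives $j=0$, so it is $v^2=u^3-1$, with Hasse invariant $-3$. Both are nonzero, so these curves are ordinary, a contradiction. You mention ``$j$-invariant restrictions'' but never say what they contradict, and your plan explicitly relies on integrality and the congruence instead.

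\paragraph{The case $J/m\ge2$.} This case is only announced (``I expect''), and the tool you name does not suffice. For $j=1$ the sharp genus is just $ph$, and with merely $k\ge1$ the inequality does not close. For example, with $m=(p-1)/2$ a hypothetical $k=1$ would give $|G|/x\approx p^3/4$, while $12g$ can be as small as about $\tfrac32p(p-5)$.

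What closes it is the integrality of $e$. Since $|G|/x$ decreases in $e$, evaluate at the least integer $e>pm/(J-1)$. For $j=1$ and $m=(p-1)/2$ this is $e=(p+3)/2$, forcing $k=p-3$ rather than $k=1$. Similarly, $e=m+1$ and $k=p-2-2m$ when $2\le m\le(p-1)/3$. The bound $m\le4h+2$ then finishes both subcases.

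For $j\ge2$, write $J=vm$, so that $k=m(ve-p)-e$ and positivity requires $ve\ge p+1$. Bound at $ve=p+1$, and treat $p=3$, $j=2$ separately, since there $J=p+1$. This is how the paper argues.
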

\begin{proof}
The complement $C_m$ acts faithfully on $\cX/S$ and fixes the image of
$P$. The tame cyclic bound in Section~\ref{sec:prelim} gives $m\le4h+2$,
including the elliptic case.

First suppose $j\ge2$. Write $J=vm$ and $a=ve-p$. Positivity in
\eqref{eq:prime} implies $ma-e>0$, and hence $a\ge1$. If $J>p+1$,
the function of $a$ below is decreasing, so
\begin{equation}\label{eq:Rbound}
 \frac{|G|}{x}
 =\frac{pm(p+a)}{a(J-1)-p}
 \le\frac{pm(p+1)}{J-p-1}.
\end{equation}
For $p\ge7$, $j\ge2$, the last quantity is at most
$2pm\le2p(4h+2)<12g$.
For $p=5$, $j\ge3$, it is at most $pm<12g$; likewise for
$p=3$, $j\ge4$ (the value $j=3$ is inadmissible).
For $p=5,j=2$, we have $m\mid8$ and the bound is $15m$.
If $m\le4$, it is at most $60<12g$; if $m=8$, then $h\ge2$
and it is at most $120<12g$. Finally, if $p=3,j=2$, then
$m\mid4$, positivity gives $e\ge m+1$, and the decreasing function
in \eqref{eq:prime} is at most $m(m+1)\le20<12g$.
Therefore $j=1$.

Now $g=ph$ and $m\mid p-1$. The expression in
\eqref{eq:prime} decreases with $e$, so use the least integer
$e>pm/(p-2)$ to obtain an upper bound, even if that integer is not
prime to $p$. There are three possibilities for $m$.
\begin{enumerate}[label=\textup{(\roman*)},leftmargin=*]
\item If $m<(p-1)/2$, then $m\le(p-1)/3$.
For $m\ge2$ the least integer is $m+1$, and
\[
 \frac{|G|/x}{g}
 \le\frac{m(m+1)}{h(p-2-2m)}
 \le\frac{m+2+2/(m-1)}h\le10.
\]
For $m=1$, $p\ge5$, the bound is $2/[h(p-4)]\le2$.
\item If $m=(p-1)/2$ and $p\ge5$, the least integer is $(p+3)/2$,
and
\[
 \frac{|G|/x}{g}\le
 \frac{p+5+12/(p-3)}{4h}\le2+\frac4h\le6,
\]
using $p\le8h+5$. For $p=3,m=1$, the bound is $4/h$.
\item If $m=p-1$, $p\ge5$, the least integer is $p+2$, and
\[
 \frac{|G|/x}{g}\le
 \frac{p+5+18/(p-4)}h.
\]
Here $p\le4h+3$. For $p\ge7,h\ge2$ this is at most
$4+14/h\le11$. For $h=1$, only $p=5,7$ remain. For $p=5$
the bound is $28/h$, leaving only $h=1,2$.
For $p=3,m=2$ the bound is $14/h$, and $h=1$ would give
$g=3$, outside our assumptions.
\end{enumerate}
It follows that only $(p,h,m)=(5,1,4),(5,2,4),(7,1,6)$ remain.
For $p=5,m=4$, increasing $e$ from $7$ to $8$ already gives
$|G|/x\le40<12g$; for $p=7,m=6$, increasing $e$ from $9$ to
$10$ gives $|G|/x\le105/2<12g$. Thus the respective orders are
$e=7,7,9$.

The two elliptic cases are impossible. Deuring--Shafarevich gives
$\gamma(\cX/S)=0$. An elliptic curve in characteristic $5$ with a
point-fixing automorphism of order $4$ has model $v^2=u^3-u$;
its Hasse invariant is the nonzero coefficient $-2$ of $u^4$ in
$(u^3-u)^2$. Similarly in characteristic $7$, order $6$ gives
$v^2=u^3-1$, with nonzero coefficient $-3$ of $u^6$ in
$(u^3-1)^3$. Both are ordinary, a contradiction.
The remaining case gives $g=10$, $x=18$ and
$|G|=18\cdot20\cdot7/(21-20)=2520$.
\end{proof}

\subsection{Identifying the exceptional group}
\begin{lemma}\label{lem:A7}
In the remaining case of Lemma~\ref{lem:positiveh}, $G\cong A_7$.
\end{lemma}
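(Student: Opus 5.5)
The plan is to identify $G$ from its order and local data alone. The data are $p=5$, $h=2$, $j=1$, $m=4$, $e=7$, $g=10$ and $|G|=2520=2^3\cdot3^2\cdot5\cdot7$. The group-theoretic inputs are as follows.
\begin{enumerate}[label=\textup{(\alph*)},leftmargin=*]
\item The Sylow $5$-subgroup $S\cong C_5$ is a TI subgroup, and $N_G(S)=G_P\cong C_5\rtimes C_4$ has order $20$. Since $j=1$, Lemma~\ref{lem:local} forces $m\mid j(p-1)=4$, and $m=4$ means $C_4$ acts faithfully on $S$. Thus $N_G(S)$ is the Frobenius group $F_{20}$, and $C_G(S)=S$.
\item The number of Sylow $5$-subgroups is $n=|G|/20=126\equiv1\pmod 5$. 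Equivalently, $|\Omega|=126$.
\item The tame orbit $\Delta$ has stabilizer $C_7$ of order $e=7$, so $|\Delta|=360$.
\end{enumerate}

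The first step is to show that $G$ is simple, or at least nonsolvable with a simple socle. Let $N\trianglelefteq G$ be a minimal normal subgroup. If $5\mid |N|$, then $S\le N$ and the Frattini argument gives $G=N\,N_G(S)$. If $5\nmid|N|$, then $N$ is a normal $5'$-subgroup, and $S$ acts on $N$. Since $C_G(S)=S$, this action is fixed-point-free, so $N$ is nilpotent. I would then exclude such an $N$ by a quotient argument. The group $N$ fixes no point of $\Omega$ unless it centralizes some Sylow $5$-subgroup, which it cannot do because $C_G(S)=S$. Hence $N$ acts semiregularly on $\Omega$, so $|N|$ divides $126$, and also $|N|\equiv1\pmod5$ by the fixed-point-free action of $S$. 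This leaves $|N|\in\{6,21,126,\dots\}$ restricted to divisors of $126$, namely $|N|=6,21,126$ up to nilpotency and minimality. Because $N$ is elementary abelian, $|N|$ is a prime power, and no prime-power divisor of $126$ other than $1$ is $\equiv1\pmod5$. So $N=1$, a contradiction. Hence every minimal normal subgroup contains $S$, so $G$ has a unique one, and it is nonabelian since $|N|$ is not a power of $5$.

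The second step identifies the socle. The socle $N=T^k$ has a cyclic Sylow $5$-subgroup of order $5$, which forces $k=1$. So $T$ is a nonabelian simple group with $5\,\|\,|T|$ and $|T|$ dividing $2520$. Using $G=T\,N_G(S)$, we get $[G:T]\mid 4$. By the classification of simple groups of order dividing $2520$, the candidates are $A_5$, $\PSL(2,7)$ (excluded since $5$ does not divide its order), $A_6$, $\PSL(2,8)$ (excluded since $9\cdot8\cdot7$ does not involve $5$), and $A_7$. The index condition $[G:T]\mid4$ then leaves only $T=A_7$ with $G=T$; $A_6$ would need index $7$, and $A_5$ index $42$. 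Hence $G\cong A_7$.

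The main obstacle is the solvable/radical exclusion in the first step. The orbit-counting argument must be done carefully, in particular checking that $S$ acts fixed-point-freely on $N$ via $C_G(S)=S$. As a fallback, I would apply Hurwitz to $\cX/N$ to rule out $N$ numerically.
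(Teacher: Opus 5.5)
Your proof is correct, and it takes a genuinely different route from the paper.

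\textbf{The paper's route.} The paper gets its leverage from the geometry of the cover. Because the lower jump is $1$, it has $Z(G_P)=1$ and hence $Z(G)=1$. The abelianization $G/G'$ would give a tame abelian cover of $\PP^1$ with at most two branch points, which must be cyclic of degree dividing both $4$ and $7$; so $G$ is perfect. The paper then takes a maximal normal subgroup $N$ and reads off $|N|\in\{42,15,7,5\}$ from the simple groups of order dividing $2520$. Solvability of $\Aut(N)$ together with perfectness forces $N\le Z(G)=1$.

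\textbf{Your route.} You work entirely inside $G$, using the self-centralizing normalizer $N_G(S)\cong F_{20}$. For a normal $5'$-subgroup $N$, the stabilizer $N\cap N_G(S)$ of a point of $\Omega$ is normal in $N_G(S)$ and meets $S$ trivially. It therefore centralizes $S$ and is trivial. Hence $N$ is semiregular on $\Omega$, so $|N|$ divides $126$ and $|N|\equiv1\pmod5$, which kills it. In place of perfectness, the Frattini argument gives $[G:T]\mid4$.

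\textbf{What each buys.} Your approach needs neither the tame-cover input nor the tame orbit beyond the value $|G|=2520$. In effect it shows that any group of order $2520$ with a self-centralizing Sylow $5$-subgroup whose normalizer has order $20$ is $A_7$. The paper's approach is shorter once the geometric perfectness is granted.

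\textbf{Small repairs.}
\begin{enumerate}
\item Faithfulness of $C_4$ on $S$ does not follow from $m\mid 4$ alone. It follows because, in the proof of Lemma~\ref{lem:local}, conjugation acts by the scalar $\zeta^j=\zeta$, of order $m/\gcd(m,j)=m$.
\item Invoking Thompson's theorem for nilpotency of $N$ is unnecessary. Since $|N|$ divides $126$ and no nonabelian simple group has order dividing $126$, a minimal normal $N$ is already elementary abelian.
\item The claim ``$|N|$ is not a power of $5$'' needs the remark that otherwise $N=S$ would be normal, contradicting $n=126$.
\end{enumerate}
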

\begin{proof}
At the wild point, $I=C_5\rtimes C_4$ with faithful conjugation:
the lower jump is $1$, so its tame tangent character acts faithfully
on the first ramification quotient. Hence $S\le G'$ and $Z(I)=1$.
Every central element of $G$ preserves the unique fixed point of $S$,
so $Z(G)\le Z(I)=1$.

The abelian quotient $G/G'$ is prime to $5$. The associated quotient
of the $G$-cover of $\PP^1$ is tame and branched at most at its two
branch points. A nontrivial connected tame cover of $\PP^1$ with
at most two branch points is cyclic and totally ramified at both.
Its degree would divide both $4$ and $7$. Thus $G=G'$.

Choose a maximal proper normal subgroup $N$ of $G$. Since $G$ is
perfect, $G/N$ is nonabelian simple. The standard list of simple
groups with order dividing $2520$ is
\[
 A_5,\quad \PSL(2,7),\quad A_6,\quad \PSL(2,8),\quad A_7;
\]
by the classification of finite simple groups; see also the order
catalogue in \cite{GAPtable}.
In the first four cases $|N|$ is respectively $42,15,7,5$.
For each of these orders $\Aut(N)$ is solvable. For order $42$,
the Sylow $7$-subgroup is characteristic, the quotient has order
$6$, and the kernel of the action on both subgroup and quotient
is an abelian group of derivations. The other three orders are
immediate. A perfect group has trivial image in a solvable group;
therefore conjugation makes $N$ central in $G$, contradicting
$Z(G)=1$ unless $N=1$. Only $G/N=A_7$ is left, and then $N=1$.
\end{proof}

Orbit--stabilizer now gives the orbit lengths in case~\ref{case:A7}.
The Hermitian example follows from the known embedding
$A_7\le\PSU(3,5)$; see \cite[Remark 6.15]{GKmany}.
It has zero $5$-rank and $2520>24\cdot10\cdot9$.
This example concerns a subgroup of the full automorphism group.

\subsection{Rational quotient: the explicit family}
If $h=0$, the cover $\cX\to \cX/S$ is an Artin--Schreier cover branched
only at infinity, so it has an equation $y^p-y=f(x)$ with
$\deg f=j$, $(j,p)=1$. Since $G$ moves the point over infinity,
the one-point characterization \eqref{eq:Stichtenothprelim} gives, up to isomorphism,
\[
 f(x)=x^d,\quad d<p,\ d\mid p+1,
 \qquad\text{or}\qquad f(x)=x^{p+1}.
\]
The second possibility must be excluded numerically, since $S$ is a
Sylow subgroup of $G$, not necessarily of $\Aut(\cX)$.
Here $J=p^2-1$ and $g=p(p-1)/2$. Bound~\eqref{eq:Rbound}, together
with $m\le J$, gives
\[
 \frac{|G|}{x}\le\frac{p(p^2-1)}{p-2}
 <6p(p-1)=12g,
\]
a contradiction. Thus the first family is forced.

For this family the point stabilizer in the full group has order
$p d(p-1)$, and its orbit has $p+1$ points, by the same theorem.
The explicit transformations used there descend on the $y$-line
to the affine group and inversion. They generate $\PGL(2,p)$;
the kernel consists of $x\mapsto\zeta x$, $\zeta^d=1$, and is central.
This gives the exact sequence and the order of $A_d$.
Dickson's subgroup classification shows that a subgroup of
$\PGL(2,p)$ containing a $p$-element either fixes its unique point
in $\PP^1(\F_p)$ or contains $\PSL(2,p)$. The former would make
$G$ fix the unique point over that branch point. Hence the image
of $G$ is $\PSL(2,p)$ or $\PGL(2,p)$, proving
case~\ref{case:AS}.

\subsection{The exceptional alternating action}\label{sec:A7rigidity}

\begin{proposition}\label{prop:A7rigidity}
Let $\cX$ and $G$ be as in Lemmas~\ref{lem:positiveh} and~\ref{lem:A7}.
Then $\cX$ is isomorphic over $K$ to the Hermitian curve $H_5$ of
degree $6$.
\end{proposition}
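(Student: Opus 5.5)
The plan is to identify $\cX$ through the Weierstrass semigroup at the wild point $P$ and a plane model of degree $6$, and then to pin down the equation using the $G_P$-action. The numerical data from Lemma~\ref{lem:positiveh} are $p=5$, $g=10$, $h=g(\cX/S)=2$, $j=1$, $G_P=C_5\rtimes C_4$ and $e=7$. The target $H_5\colon z^5+z=y^6$ has exactly these invariants: genus $10$, semigroup $\langle5,6\rangle$ at a rational point, and $A_7\le\PSU(3,5)$ by \cite[Remark 6.15]{GKmany}. So it suffices to show that the data force $H(P)=\langle 5,6\rangle$ and that the resulting model is Hermitian.

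\emph{Step 1: the quotient.} The curve $\cY=\cX/S$ has genus $2$ and carries the faithful action of $C_4=G_P/S$ fixing $\bar P$, the image of $P$. A genus-$2$ curve with an automorphism of order $4$ fixing a point is hyperelliptic, and the fixed point is a Weierstrass point. Indeed, the hyperelliptic involution commutes with $C_4$, and its square lies in $C_4$ as the involution. Hence $2,4,5,\dots\in H(\bar P)$. Pulling back along the totally ramified degree-$5$ map gives $10,20,\dots$ and $25,\dots$ in $H(P)$.

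\emph{Step 2: the gaps at $P$ (main obstacle).} We must show that $5,6\in H(P)$. Since $g=10$ and $\langle5,6\rangle$ has exactly $10$ gaps, the semigroup is then exactly $\langle5,6\rangle$. I would decompose each space $L(nP)$ into eigenspaces for $G_P$. Since $j=1$, the group $S$ acts through a Jordan block on $L(nP)$ (as a $K[S]$-module). The weights under $C_4$ of the pole orders are determined by the tangent character: a function with pole order $k$ transforms by $\zeta^{-k}$.

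Riemann--Roch together with the equality $\dim L(nP)^S=\dim L_{\cY}(\lfloor n/5\rfloor\bar P)$ constrains how many pole orders can lie in each residue class modulo $5$. I would combine this with the Deuring--Shafarevich information ($\gamma(\cX)=0$, so every $S$-orbit other than $\{P\}$ is free) to force a function of pole order $5$. An alternative route is the trace or norm of a suitable function from $\cY$: the Artin--Schreier generator $w$ with $w^5-w=f$ and $f\in L_{\cY}(k\bar P)$, $5\nmid k$, has pole order $k$. The conductor relation $j=1$ at the single branch point gives
\[
 2\cdot10-2=5(2\cdot2-2)+(5-1)(j+1),
\]
which determines the Artin--Schreier conductor. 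So one should be able to choose $f$ with a pole of order $6$ or $1$ after subtracting $5$th powers, and this yields $6\in H(P)$. The real difficulty is producing $5$, since $5$ is not a pullback from $\cY$. Here I expect to use the $A_7$-action more globally: take the orbit divisor of a point of $\Delta$ or $\Omega$, or use the fact that the $126$ points of $\Omega$ are all equivalent and exploit $g=10$ with $\dim L(5P)$ via the canonical series.

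\emph{Step 3: the equation.} Once $5,6\in H(P)$, choose functions $u,v$ with pole divisors $5P$ and $6P$. They generate $K(\cX)$ because $\gcd(5,6)=1$, and they satisfy a plane sextic relation. Since $g=10=\binom{5}{2}$, the sextic is smooth at all affine points and has a single unibranch point at infinity. Making $S$ act by $u\mapsto u+c$ and normalizing by the $C_4$-weights, the relation takes the form $v^5+av=\phi(u)$ with $\deg\phi=6$. Completing the power removes lower terms, and the $C_4$-eigenvector normalization, with weights $\zeta^{-5},\zeta^{-6}$, kills all mixed coefficients. This leaves $v^5+av=bu^6$, which scales over $K$ to $z^5+z=y^6$. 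Zero $5$-rank and the fact that $S$ has order exactly $5$ are consistent with this model.
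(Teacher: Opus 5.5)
\textbf{There is a genuine gap.} Your Step~2 is not carried out. You say yourself that producing $5\in H(P)$ is the real difficulty, and you only indicate where you would look. The Artin--Schreier remark does not help as written. Since $\bar P$ is a Weierstrass point, $1$ is a gap of $H(\bar P)$, so a globally reduced generator $f$ cannot have a pole of order $1$. A pole of order $6$ would give conductor $6$, not $j=1$.

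The decisive problem is Step~3, which is false. You use only data local at $P$: zero $5$-rank, $G_P=C_5\rtimes C_4$, $h=2$, $j=1$ and $H(P)=\langle5,6\rangle$. These data do not determine the curve. Take $\tau(u)=iu$, $\tau(v)=-v$ with $i^2=-1$ in $\F_5$. Then $u^2$ and $v$ have the same $C_4$-weight as $v^5$ and $u^6$, so the eigenvector normalization kills nothing. For every $c\ne0$ the curve
\[ C_c:\ v^5+v=u^6+cu^2 \]
is a counterexample.
\begin{itemize}
\item It is a smooth plane sextic with a total inflection point at $P_\infty$. Hence $g=10$ and $H(P_\infty)=\langle5,6\rangle$.
\item The maps $u\mapsto u+a$, $v\mapsto v+bu+d$ with $a=b^5$, $b^{25}+2cb^5-b=0$ and $d^5+d=a^6+ca^2$ form a Heisenberg group of order $125$. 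It fixes only $P_\infty$, so Deuring--Shafarevich gives $\gamma=0$.
\item $\tau$ normalizes this group. By coprime action there is a $\tau$-stable noncentral $C_5$ on which $\tau$ acts faithfully. Its lower jump is $1$ (with $\pi=u/v$ one finds $v_P(\sigma(\pi)-\pi)=2$), and its quotient has genus $2$.
\end{itemize}
Yet $C_c\not\cong H_5$. An isomorphism would send the total inflection point $P_\infty$ to an $\F_{25}$-point, which can be moved to infinity. It would then have the form $y=a_1u+a_0$, $z=b_2v+b_1u+b_0$. In characteristic $5$ the pull-back of $z^5+z-y^6$ has no $u^2$ term. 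But it would have to be a scalar multiple of the unique relation $v^5+v-u^6-cu^2$ in $L(30P)$.

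So no argument confined to $G_P$ and the semigroup at $P$ can succeed: the global $A_7$-structure must enter. The paper does this with $U=A_6$.
\begin{itemize}
\item Riemann--Hurwitz shows $\cX/U\cong\PP^1$. The wild contributions are $270$ and $468$, and $C_7$ contributes nothing.
\item The degree-$7$ map $\cX/U\to\cX/G$ has ramification $5+2$ and $7$, so it is $u=z^5(z-1)^2$.
\item Since $A_6$ is core-free, $\cX$ is the Galois closure of this rigid map. That closure is the same as for the known action $A_7\le\PGU(3,5)$ on $H_5$.
\end{itemize}
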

\begin{proof}
Take $U=A_6\le A_7$ in its natural index-$7$ action.
The inertia group $I=5{:}4$ has two orbits of lengths $5$ and $2$ on
$G/U$.  Accordingly the two $U$-orbits above the wild branch point
have stabilizers $C_4$ and $5{:}2$, and lengths $90$ and $36$.
The first contributes $90(4-1)=270$ to the different of
$\cX\to\cX/U$.  In the second, lower ramification is $5{:}2,C_5,1$,
so its contribution is $36((10-1)+(5-1))=468$.
The tame inertia $C_7$ intersects $U$ trivially and contributes zero.
Thus
\[
 18=360\bigl(2g(\cX/U)-2\bigr)+738,
\]
and $\cX/U\cong\PP^1$.

The resulting degree-$7$ map
$\cX/U\longrightarrow\cX/G$ has fibers of multiplicities $5+2$
and $7$ at its two branch points.  Place the latter point at infinity
on both lines, and the two zeros at $0,1$ on the source.  Up to a
nonzero scalar on the target, the map is necessarily
\begin{equation}\label{eq:A7map}
 u=z^5(z-1)^2.
\end{equation}
The core of $A_6$ in $A_7$ is trivial; hence $K(\cX)$ is the Galois
closure of $K(z)/K(u)$.  This rational map determines that closure up
to $K$-isomorphism.  The known $A_7$ action on $H_5$
\cite[Remark~6.15]{GKmany} has the same data and therefore the same
Galois closure.  It follows that $\cX\cong H_5$.
\end{proof}

\subsection{The structural alternatives}

\begin{theorem}[Structural reduction]\label{thm:main}
Suppose
\[
 g=g(\cX)\ge4,\qquad \gamma(\cX)=0,\qquad
 |G|>24g(g-1),\qquad G\le\Aut(\cX)
\]
and $G$ has no common fixed point. Let $S\in\Syl_p(G)$.
Then $\cX/G\cong\PP^1$ and there are exactly two short $G$-orbits,
one wild, $\Omega$, and one tame, $\Delta$. Exactly one of the
following three alternatives holds.
\begin{enumerate}[label=\textup{(\Alph*)},leftmargin=*]
\item\label{case:AS} $S\cong C_p$ and $\cX/S$ is rational. Up to
$K$-isomorphism,
\[
 \cX:\quad y^p-y=x^d,\qquad 2\le d<p,\quad d\mid p+1,
 \qquad g=\frac{(p-1)(d-1)}2.
\]
Its full automorphism group $A_d$ has a central exact sequence
\[
 1\longrightarrow C_d\longrightarrow A_d
 \longrightarrow\PGL(2,p)\longrightarrow1,
 \qquad |A_d|=d\,p(p^2-1).
\]
The image of $G$ is $\PSL(2,p)$ or $\PGL(2,p)$. The size
inequality must still be imposed on $G$.
\item\label{case:A7} $p=5$, $g=10$, $G\cong A_7$, and $S\cong C_5$.
Here
\[
 g(\cX/S)=2,\quad |\Omega|=126,\quad |\Delta|=360,
 \quad G_P\cong C_5\rtimes C_4,\quad G_R\cong C_7,
\]
for $P\in\Omega$, $R\in\Delta$, and the lower jump of $S$ is $1$.
By Proposition~\ref{prop:A7rigidity}, $\cX$ is the Hermitian curve
$H_5$ of degree $6$.
\item\label{case:noncyc} $S$ is noncyclic. The kernel $M$ of $G$ on
$\Omega$ is a central cyclic $p'$-subgroup. Put $r=|M|$ and $H=G/M$.
The possibilities and necessary tame stabilizer orders $e=|G_R|>1$
are given in Table~\ref{tab:main}. In this table $n=|\Omega|=|S|+1$
and $b=|H|/[n(n-1)]$.
\end{enumerate}
In case \ref{case:noncyc}, $\cX/S$ is rational, $G_R$ is semiregular
on $\Omega$, and
\begin{equation}\label{eq:classification-master}
 \quad |G|=r n(n-1)b,\qquad
 2g-2=n\left(\frac{rb}{e}-1\right),\qquad r\mid n.\quad
\end{equation}
The additional quotient-genus condition is
\begin{equation}\label{eq:qgenus}
 2g(\cX/M)-2=n\left(\frac b e-1\right)\in\{-2,0,2,4,\ldots\}.
\end{equation}
In particular no cyclic Sylow subgroup $C_{p^a}$ with $a\ge2$ occurs.
\end{theorem}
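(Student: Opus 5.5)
This theorem mostly collects results already established, so I will prove it by assembling the earlier results in order. The opening assertions follow from Theorem~\ref{thm:two}: for $g\ge4$ the bound $|G|>24g(g-1)$ exceeds $\max\{84(g-1),8(g-1)^2\}$. Hence $\cX/G\cong\PP^1$, and there are exactly two short orbits, the wild orbit $\Omega$ and a tame orbit $\Delta$. I will then split according to whether $S$ is cyclic. These two branches are disjoint, and within the cyclic branch I split further by the value of $h=g(\cX/S)$. This makes the three alternatives mutually exclusive: \ref{case:A7} has $h=2$, \ref{case:AS} has $h=0$, and \ref{case:noncyc} has noncyclic $S$.

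Cyclic branch. Proposition~\ref{prop:highcyclic} gives $S\cong C_p$, which also proves the final sentence of the theorem.

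If $h\ge1$, Lemma~\ref{lem:positiveh} leaves only the data $(5,2,1,4,7,10,2520)$. Lemma~\ref{lem:A7} then gives $G\cong A_7$. Orbit--stabilizer yields $|\Omega|=2520/20=126$ and $|\Delta|=2520/7=360$, and Proposition~\ref{prop:A7rigidity} identifies $\cX\cong H_5$. This is case~\ref{case:A7}.

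If $h=0$, the argument in the subsection on the rational quotient applies. The one-point characterization \eqref{eq:Stichtenothprelim} leaves two models, and the model $y^p-y=x^{p+1}$ is excluded numerically. The remaining model is $y^p-y=x^d$ with $d\mid p+1$ and $d<p$. Here $d\ge2$, because $g\ge4>0$.

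For this model, the explicit automorphisms generate a central extension of $\PGL(2,p)$ by $C_d$. Dickson's classification forces the image of $G$ to be $\PSL(2,p)$ or $\PGL(2,p)$, since otherwise $G$ would have a fixed point. This gives case~\ref{case:AS}.

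Noncyclic branch. I will collect the results of Section~\ref{sec:noncyclic} as follows.
\begin{itemize}
\item Theorem~\ref{thm:GMPprelim} together with Proposition~\ref{prop:outer} identifies $H=G/M$ as one of the listed rank-one groups, and gives $n=s+1$.
\item Remark~\ref{rem:smallree} removes ${}^2G_2(3)$.
\item Lemma~\ref{lem:rational} shows that $\cX/S$ is rational.
\item Lemma~\ref{lem:intersection} shows that $G_R$ is semiregular on $\Omega$.
\item Theorem~\ref{thm:kernel} shows that $M$ is central, cyclic, equal to the kernel of the action on $\Omega$, and satisfies $r\mid n$.
\end{itemize}
The two-point stabilizer in $H$ has order $b=|H|/[n(n-1)]$, so $|G|=rn(n-1)b$. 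Moreover $\bar c=b$, and substituting this into \eqref{eq:masterkernel} and \eqref{eq:descent} gives \eqref{eq:classification-master} and \eqref{eq:qgenus}. The quotient-genus value in \eqref{eq:qgenus} lies in $\{-2,0,2,\ldots\}$ because $\cX/M$ is a curve of nonnegative genus.

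The table entries for $e$ are the semiregular orders listed in Section~\ref{sec:rankone}, after removing the branches excluded by Propositions~\ref{prop:unitary} and~\ref{prop:ree}.

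The main obstacle is bookkeeping rather than new mathematics. Table~\ref{tab:main} must list exactly the surviving $(H,e)$ pairs: the linear branches with $e\mid(q+1)/2$ or $e\mid q+1$, the unitary Singer branch, and the Ree branches $e\mid(q+1)/4$ and $e\mid q-3q_0+1$. I must also make sure that the identification $\bar c=b$ holds uniformly across $\PSL$, $\PGL$, $\PSU$, $\PGU$ and $\Ree$, which I will check against \eqref{eq:rankonedata}.
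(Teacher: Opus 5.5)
Your assembly is correct and is essentially the paper's own proof. You use Theorem~\ref{thm:two} for the orbits; Proposition~\ref{prop:highcyclic}, Lemmas~\ref{lem:positiveh}--\ref{lem:A7} and the Stichtenoth argument for the cyclic case; and Theorem~\ref{thm:GMPprelim}, Proposition~\ref{prop:outer}, Remark~\ref{rem:smallree}, Lemmas~\ref{lem:rational}--\ref{lem:intersection} and Theorem~\ref{thm:kernel} for the noncyclic case, with disjointness read off from the Sylow type and $g(\cX/S)$. Note that $\bar c=b$ is immediate from $|G|=n\,s\,r\bar c$, so no case-by-case check is needed.

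One bookkeeping correction: Proposition~\ref{prop:ree} removes only the single value $e=q+3q_0+1$. Its proper divisors $e>1$ are not excluded by \eqref{eq:qgenus}, so the Ree row must keep $e\mid q\pm3q_0+1$ as stated. Those divisors are eliminated only later, in Proposition~\ref{prop:reemodel}.
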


\begin{table}[ht]
\centering\small
\renewcommand{\arraystretch}{1.35}
\begin{tabular}{@{}llll@{}}
\toprule
$H$ & $n$ & $b$ & Necessary condition on $e>1$\\
\midrule
$\PSL(2,q)$ & $q+1$ & $(q-1)/2$ & $e\mid(q+1)/2$\\
$\PGL(2,q)$ & $q+1$ & $q-1$ & $e\mid q+1$\\
$\PSU(3,q)$ & $q^3+1$ & $(q^2-1)/\mu$ & $e\mid(q^2-q+1)/\mu$\\
$\PGU(3,q)$ & $q^3+1$ & $q^2-1$ & $e\mid q^2-q+1$\\
$\Ree(q)$ & $q^3+1$ & $q-1$ &
 $e\mid(q+1)/4$ or $e\mid q\pm3q_0+1$\\
\bottomrule
\end{tabular}
\caption{In the linear rows $q=p^a$, $a\ge2$; in the unitary rows
$q=p^a\ge3$ and $\mu=(3,q+1)$. Coincident rows are identified.
In the Ree row $p=3$, $q=3q_0^2\ge27$, $q_0=3^a$.
The value $e=q+3q_0+1$ is excluded by \eqref{eq:qgenus}.}
\label{tab:main}
\end{table}

For the linear rows write $\nu=2,1$, respectively, and
$e=(q+1)/(\nu t)$. For the unitary rows write $\delta=\mu,1$,
respectively, and $e=(q^2-q+1)/(\delta t)$. The genus formulas become
\begin{align}
 2g&=(q-1)(rt-1) &&\text{(linear)},\label{eq:linear}\\
 2g&=(q-1)\bigl(rt(q+1)^2-(q^2+q+1)\bigr)
 &&\text{(unitary)}.\label{eq:unitary}
\end{align}
For the Ree row, equation~\eqref{eq:classification-master} is the uniform genus formula.
The equations and the sufficient realization conditions are determined
in Sections~\ref{sec:equations} and~\ref{sec:fullgroups}.

\begin{proof}[Proof of Theorem~\ref{thm:main}]
Theorem~\ref{thm:two} gives the two short orbits. Proposition
\ref{prop:highcyclic}, Lemmas~\ref{lem:positiveh}--\ref{lem:A7}
and the rational-quotient argument in Section~\ref{sec:cyclic}
give alternatives~\ref{case:AS} and~\ref{case:A7}.
For a noncyclic Sylow subgroup, Theorem~\ref{thm:GMPprelim} and Proposition
\ref{prop:outer} give the groups in Table~\ref{tab:main}; the small
Ree case is excluded in Section~\ref{sec:rankone}. Lemmas
\ref{lem:rational} and~\ref{lem:intersection}, followed by
Theorem~\ref{thm:kernel}, give rationality, semiregularity,
centrality, the genus formulas and $r\mid n$. The semiregular
orders and the exclusions in Section~\ref{sec:rankone} complete
the table. The three alternatives are disjoint by the Sylow
structure and the genus of the Sylow quotient.
\end{proof}

\section{Equivariant one-point covers}\label{sec:rigidity}

We now determine how the tame action restricts the equations of a
one-point cover. All covers in this section are over $K$, and all
ramification numbers refer to the point at infinity of the rational base.  A polynomial is called $p$-reduced if its
nonconstant exponents are prime to $p$.  It is called $Q$-reduced, for
$Q=p^a$, if no nonconstant exponent is divisible by $Q$.

\begin{lemma}[Equivariant elementary-abelian covers]\label{lem:ASrigid}
Let $V\cong(\F_Q,+)$ act on a curve $Y$ with $Y/V=\PP^1_x$, with just one
branch point, at infinity, and with just one positive lower jump $D$.
Suppose a tame cyclic group $T=\langle\tau\rangle$ of order $m$ normalizes
$V$, fixes a point above $x=0$, and acts by $\tau(x)=\zeta x$, where
$\zeta$ has order $m$.  Suppose its conjugation action on $V$ is
irreducible over $\F_p$, with kernel of order $r$.  Put $b=m/r$.
Then
\[
 D=rt,\qquad (t,b)=1,\qquad p\nmid D.
\]
If there are no integers $h,k$ satisfying
\begin{equation}\label{eq:ASobstruction}
 0<h<t,\qquad p\nmid h,\qquad 0\le k<a,\qquad
 b\mid tp^k-h,
\end{equation}
then, after scaling $x$ and marking the additive group,
\[
 K(Y)=K(x,y),\qquad y^Q-y=x^D.
\]
\end{lemma}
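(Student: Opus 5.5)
The plan is to reduce everything to a computation with Artin--Schreier classes, using the description \eqref{eq:AScohomology} of $H^1_{\mathrm{\acute et}}(\mathbb A^1_K,\F_Q)$. Since $Y\to\PP^1_x$ is an $(\F_Q,+)$-cover branched only at $x=\infty$, we can write $K(Y)=K(x,y)$ with $y^Q-y=f(x)$ for some $f\in K[x]$, determined up to $h^Q-h$. I would first fix a normal form. Using $c^Qx^{Qi}\equiv cx^i$, we may take $f$ $Q$-reduced and without constant term, and this representative is then unique for a fixed marking $V\cong\F_Q$. Each $\F_p$-quotient of $V$ corresponds to $\operatorname{Tr}_{\F_Q/\F_p}(\lambda f)$ for some $\lambda\in\F_Q^\times$. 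The hypothesis of a single positive lower jump $D$ says that every such degree-$p$ subcover has conductor exactly $D$. By the genus formula for $y^p-y=f$, this means the $p$-reduction of every $\operatorname{Tr}(\lambda f)$ has degree exactly $D$. In particular $p\nmid D$, and the leading term of $f$ may be taken to be $c\,x^D$.

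Next I would encode the tame action. Conjugation by $\tau$ is an $\F_p$-linear automorphism of $V$. It generates a cyclic group of order $b=m/r$ acting irreducibly, so by Schur's lemma $\F_p[\tau]\cong\F_{p^{a'}}$ acts as a field on $V$, and $V$ is one-dimensional over it. After re-marking $V\cong\F_Q$ by an $\F_p$-linear change of coordinates, and possibly composing with a power of Frobenius, $\tau$ acts as multiplication by some $\omega\in\F_Q^\times$ of order $b$. The lift of $\tau$ then satisfies $\tau(y)=\omega y+g(x)$, and applying $\tau$ to the equation gives
\[
 f(\zeta x)\equiv\omega f(x)\pmod{\{h^Q-h\}},
\]
together with a $p$-power twist of this congruence if the marking is only semilinear. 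Because $\tau$ fixes a point above $x=0$, I expect $g$ to be adjustable to $0$ and constants to be harmless. Uniqueness of the $Q$-reduced representative then lets me compare coefficients monomial by monomial. For the leading term this gives $\zeta^D=\omega$. Since $\zeta$ has order $m$ and $\omega$ has order $b$, it follows that $\gcd(D,m)=r$, so $D=rt$ with $(t,b)=1$.

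For the lower terms $c_ix^i$ of $f$, the same comparison gives $\zeta^i=\omega^{p^{-k'}}$, up to the Frobenius twist coming from the marking. The constraint from the single jump is that after $p$-reduction inside each trace, a term $x^i$ with $i=p^k i'$ and $p\nmid i'$ contributes to degree $i'$. The requirement that no reduced degree exceed $D$ and that cancellations not lower the jump forces $i'<D$. Comparing $\zeta$-eigenvalues then shows $r\mid i'$; writing $i'=rh$ with $0<h<t$ and $p\nmid h$, the congruence becomes $b\mid tp^k-h$ for some $0\le k<a$. This is exactly \eqref{eq:ASobstruction}. So if no such $(h,k)$ exists, $f=c\,x^D$, and scaling $x$ by a $D$-th root of $c^{-1}$ gives $y^Q-y=x^D$.

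The main obstacle is this last bookkeeping step: controlling the interaction between the Frobenius identifications $x^{p^k i'}\leftrightarrow x^{i'}$ in each trace and the fact that $\omega$ acts on $V$ only up to a Galois twist. The aim is to show that the surviving lower exponents are exactly parametrized by pairs $(h,k)$ as in \eqref{eq:ASobstruction}. I would first handle the case where the marking makes $\tau$ act $\F_Q$-linearly, and then reduce the semilinear case to it by replacing $f$ with a Frobenius conjugate.
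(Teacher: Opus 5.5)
Your route (one $Q$-reduced equation $y^Q-y=f$, a marking in which $\tau$ acts $\F_Q$-linearly, then a monomial-by-monomial comparison) can be made to work. It is the primal counterpart of the paper's argument. As written, however, two steps are wrong and the step you call the main obstacle is left open.

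The $\F_p$-subcovers are $z^p-z=\lambda f$ with $z=\operatorname{Tr}_{\F_Q/\F_p}(\lambda y)$. They are not given by $\operatorname{Tr}(\lambda f)$, which is Artin--Schreier equivalent to $a\lambda f$ and hence trivial when $p\mid a$.

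More seriously, the single jump alone does \emph{not} let you take the leading term of $f$ to be $cx^D$. Take $a\ge2$, $\alpha\notin\F_Q$ and $f=x^D+\alpha x^{pD}$. This $f$ has the single jump $D$, since $\lambda+(\lambda\alpha)^{1/p}\neq0$ for $\lambda\in\F_Q^\times$. But the values $\lambda+(\lambda\alpha)^{1/p}$ do not form a line $w\F_Q$, so no re-marking makes $f$ a monomial. That normalization has to come from the tame action, after choosing a marking in which $\tau$ acts as multiplication by $\omega$.

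The following observation closes the argument.
\begin{itemize}
\item Such a marking always exists: $V\cong\F_p(\omega)$ with $\tau$ acting by multiplication by $\omega$, of order $b$. So there is no semilinear case, and irreducibility says exactly that $\F_p(\omega)=\F_Q$.
\item Uniqueness of $Q$-reduced constant-free representatives turns $f(\zeta x)-\omega f(x)=g^Q-g$ into $f(\zeta x)=\omega f(x)$ exactly. The fixed point over $0$ is not needed for this.
\item Hence every monomial $x^{p^ki'}$ of $f$, with $p\nmid i'$, satisfies $\zeta^{p^ki'}=\omega$. Two monomials with the same $i'$ would force $\omega^{p^j}=\omega$ with $0<j<a$, which is impossible.
\item So no cancellation occurs in the $p$-reduction of any $\lambda f$. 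The conductor is the largest $i'$ present, and that largest value is $D$.
\item The Frobenius re-marking $y\mapsto y^{p^{a-k_0}}$, which replaces $\omega$ by $\omega^{p^{a-k_0}}$, moves the unique top monomial $x^{p^{k_0}D}$ to $x^D$. Then $\zeta^D=\omega$ gives $D=rt$ and $(t,b)=1$.
\item For a lower monomial, $m\mid p^ki'-D$ forces $r\mid i'=rh$ and $b\mid p^kh-t$. Multiplying by $p^{a-k}$ and using $b\mid Q-1$ gives \eqref{eq:ASobstruction} with index $(a-k)\bmod a$.
\end{itemize}

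The paper never meets the cancellation issue, because it works in the dual space of $p$-reduced characters $f_\lambda$. There the injective leading-coefficient map and irreducibility give, after scaling $x$, $f_\lambda=\lambda x^D+\sum_{i<D}c_i(\lambda)x^i$. Expanding $c_i(\lambda)=\sum_k c_{ik}\lambda^{p^k}$ in $c_i(\zeta^D\lambda)=\zeta^i c_i(\lambda)$ gives $m\mid Dp^k-i$ directly. The relation $D=rt$ comes separately from the local coefficient map at the jump. Your version gains an exact eigen-identity, at the cost of the extra ``one monomial per $p$-free exponent'' step and the index flip.
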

\begin{proof}
The coefficient map at the first lower jump identifies $V$ with an
additive subgroup on which $\tau$ acts as the scalar $\zeta^D$ (replacing
$\tau$ by its inverse, if necessary, has no effect).  Consequently
$m/(m,D)=b$, so $(m,D)=r$.  This gives $D=rt$ and $(t,b)=1$.
Every nonzero degree-$p$ quotient has conductor $D$, hence $p\nmid D$.

Artin--Schreier theory identifies the character space of the cover with
an $a$-dimensional $\F_p$-space of $p$-reduced polynomials, all nonzero
members of which have degree $D$.  The leading-coefficient map is
injective.  Its image is stable under $\zeta^D$ and, by irreducibility,
is a one-dimensional vector space over $\F_Q$.  Rescale $x$ so that this
image is $\F_Q$.  The polynomials can then be written uniquely as
\[
 f_\lambda=\lambda x^D+
 \sum_{\substack{0<i<D\\p\nmid i}}c_i(\lambda)x^i,
 \qquad \lambda\in\F_Q,
\]
where each $c_i$ is $\F_p$-linear.  Constants represent zero
Artin--Schreier classes over $K$.  Write
$c_i(\lambda)=\sum_{k=0}^{a-1}c_{ik}\lambda^{p^k}$.
Equivariance gives
$c_i(\zeta^D\lambda)=\zeta^ic_i(\lambda)$; thus
$c_{ik}\ne0$ implies $m\mid Dp^k-i$.
It follows that $r\mid i$.  On writing $i=rh$, this is exactly
\eqref{eq:ASobstruction}.  If no such pair exists, the character space
is $\{\lambda x^D:\lambda\in\F_Q\}$, which is the character space of
$y^Q-y=x^D$.
\end{proof}

\begin{lemma}[Central twists with a conductor bound]\label{lem:twists}
Let $P$ be a finite $p$-group and $E\le Z(P)$ an elementary-abelian
subgroup identified with $(\F_Q,+)$.  Let $T=C_m$, with $p\nmid m$, act
on $P$ and on $\mathbb A^1_x$ by $x\mapsto\zeta x$.
Consider two marked $P$-covers of $\mathbb A^1_x$, equivariant for these
actions, with $T$-fixed marked points above $0$.  Suppose their marked
$P/E$-quotients are isomorphic over $\mathbb A^1_x$.
Suppose $T$ acts on $E$ by multiplication by
$\beta\in\F_Q^\times$, and both covers have largest upper jump at most
$U$.  The difference of the two marked covers is represented by a
$Q$-reduced polynomial $f(x)$ with zero constant term such that
\begin{equation}\label{eq:twistweight}
 f(\zeta x)=\beta f(x).
\end{equation}
For every nonzero monomial $c_i x^i$ of $f$, writing
$i=p^k h$ with $p\nmid h$, one has
\begin{equation}\label{eq:twistconductor}
 0\le k<a,\qquad h\le U.
\end{equation}
In particular, if these conditions permit no monomial, the two marked
$P$-covers are isomorphic.
\end{lemma}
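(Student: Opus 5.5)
We view the two marked $P$-covers as torsors and measure their difference by a class in $H^1_{\mathrm{\acute et}}(\mathbb A^1_x,E)$, using \eqref{eq:AScohomology}. The fixed isomorphism of the marked $P/E$-quotients identifies both covers as $P$-torsors over one common $P/E$-torsor $Z\to\mathbb A^1_x$. Since $E$ is central, the set of $P$-torsors lifting a given marked $P/E$-torsor, with marked points matching, is a principal homogeneous space under $H^1(\mathbb A^1_x,E)$. The action is by twisting with $E$-torsors over the base: form the contracted product and push forward along the multiplication $E\times E\to E$. Centrality makes this compatible with the $P$-action.

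The difference class $\delta$ is therefore represented by an Artin--Schreier class for $\F_Q$. Concretely, $\delta$ corresponds to an $E$-cover $y^Q-y=f(x)$, and the twisted cover is the fibre product over $Z$ of the first cover with this $E$-cover, divided by the antidiagonal copy of $E$. We normalise $f$ within its class modulo $\{h^Q-h\}$. A monomial $x^{i}$ with $Q\mid i$, $i>0$, is equivalent to $x^{i/Q}$ times a constant to the $1/Q$, so we may reduce until $f$ is $Q$-reduced. Constants are trivial over $K$, so $f(0)=0$. The normal form is unique, since a nonzero $h^Q-h$ has a monomial of exponent divisible by $Q$ at its top degree.

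Next we prove the weight condition \eqref{eq:twistweight}. Both covers are $T$-equivariant and the marked points above $0$ are $T$-fixed, so the marked difference class is well defined. Pulling back by $\tau$ and transporting the $E$-structure by conjugation, which is multiplication by $\beta$, fixes $\delta$. On the polynomial $f$ this reads $f(\zeta x)\equiv\beta f(x)$ modulo $\{h^Q-h\}$. Here we need that $\beta\in\F_Q$ commutes with $h\mapsto h^Q-h$, since $(\beta h)^Q=\beta h^Q$, so that substitution and scaling preserve $Q$-reducedness. Uniqueness of the $Q$-reduced normal form then upgrades the congruence to the equality \eqref{eq:twistweight}.

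For \eqref{eq:twistconductor}, write a monomial as $x^{p^kh}$ with $p\nmid h$. The condition $k<a$ is just $Q$-reducedness. For $h\le U$, consider an $\F_p$-linear functional $\chi:E\to\F_p$. The pushforward of $\delta$ is the $\F_p$-class $\operatorname{Tr}$-type polynomial $\sum_i \chi'(c_i)x^i$, which after $p$-reduction has degree equal to the largest $h$ among the surviving monomials, and this degree is the upper jump of that $C_p$-quotient. We choose $\chi$ so that the given monomial survives. The delicate point is that distinct exponents $p^kh$ with the same $h$ combine as $(c^{1/p^k})x^h$, and we must be able to choose $\chi$ so that the combined top-$h$ coefficient is nonzero. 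This follows from linear independence of the Frobenius twists $\lambda\mapsto\lambda^{p^{-k}}$, $0\le k<a$, over $\F_Q$.

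It remains to bound that upper jump by $U$, and this is the main obstacle. The difference cover is a quotient of the fibre product of cover $1$ and the $E$-twist of cover $2$ over $Z$. Its compositum with the Galois closure of both covers is generated by the two given $P$-extensions. Upper numbering passes to quotients, and the upper ramification filtration of a compositum is bounded by the maximum of the two (via the injection into the product group). Hence every upper jump of the $E$-cover defined by $f$, and so of each $C_p$-quotient, is at most $U$, which gives $h\le U$. If no exponent is permitted, then $f=0$ and the marked covers coincide.
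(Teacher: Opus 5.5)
Your proposal is correct and follows essentially the same route as the paper. The difference $\rho_2\rho_1^{-1}$ of the two marked lifts is an equivariant $E$-torsor represented by a $Q$-reduced Artin--Schreier polynomial, and the conductor bound comes from compatibility of upper numbering with quotients (your compositum argument is the paper's absolute-Galois-group argument) together with non-vanishing of the $p$-reduced $x^h$-coefficient $\sum_k(\lambda c_{hp^k})^{p^{-k}}$ for some $\lambda\in\F_Q$. The only variation is that you get $f(\zeta x)=\beta f(x)$ from uniqueness of the $Q$-reduced normal form rather than from the paper's averaging projector. Two points could be stated more cleanly: for $\chi=\operatorname{Tr}_{\F_Q/\F_p}(\lambda\,\cdot)$ the pushforward is simply $y^p-y=\lambda f$, and the independence of the Frobenius twists must hold with coefficients in $K$ (Dedekind--Artin, or the paper's degree count $p^{a-1}<Q$), since the $c_i$ lie in $K$.
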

\begin{proof}
Use the geometric point $0$ as base point of
$\pi_1^{\rm et}(\mathbb A^1)$.  The marked covers give homomorphisms
$\rho_1,\rho_2$ to $P$ with the same image in $P/E$.
Because $E$ is central,
$\chi(\sigma)=\rho_2(\sigma)\rho_1(\sigma)^{-1}$ is a homomorphism to
$E$.  The marked $T$-fixed points ensure that these homomorphisms, and
therefore $\chi$, are $T$-equivariant.  Thus the difference is an $E$-torsor on $\mathbb A^1$.

The Artin--Schreier sequence for $Q$ gives
\[
 H^1_{\rm et}(\mathbb A^1,\F_Q)
       =K[x]/\{h^Q-h:h\in K[x]\}.
\]
Averaging a representative by the projector
$m^{-1}\sum_{j=0}^{m-1}\beta^{-j}\tau^j$ gives
\eqref{eq:twistweight}.  This operation respects the displayed quotient
because $\beta\in\F_Q$.  Successive $Q$-reductions preserve the weight:
if $Q\mid i$ and $\zeta^i=\beta$, then
$\zeta^{i/Q}=\beta$, since $\beta^Q=\beta$ and $Q$ is prime to $m$.
Constants can be removed over $K$.

For every $u>U$, both $\rho_i$ are trivial on the absolute upper
ramification group at level $u$.  The same is therefore true of $\chi$.
Upper numbering is compatible with quotients \cite[Chapter~IV]{SerreLocal}.  Hence every degree-$p$ character of $\chi$ has conductor at
most $U$.

For $p\nmid h$, collect the
terms $c_{hp^k}x^{hp^k}$, $0\le k<a$.  In the $p$-reduction of
$\lambda f$, $\lambda\in\F_Q$, their coefficient at $x^h$ is
\[
 \sum_{k=0}^{a-1}(\lambda c_{hp^k})^{p^{-k}}.
\]
If some $c_{hp^k}$ is nonzero, this function of $\lambda$ is not
identically zero on $\F_Q$: after raising it to $p^{a-1}$ it is a
nonzero additive polynomial of degree at most $p^{a-1}<Q$.
Choosing the largest such $h$ proves that the largest conductor among
the $p$-characters is exactly the largest of these $h$'s.
Thus \eqref{eq:twistconductor} holds.  If no monomial is possible,
$\chi=0$, proving the assertion.
\end{proof}

\begin{remark}[Choice of markings]\label{rem:markings}
We explain the compatibility needed to apply Lemma~\ref{lem:twists}.
In the unitary and Ree cases, projection to the rank-one quotient
identifies $S$ with its standard Sylow group and identifies the tame
conjugation action modulo its central kernel. The first additive
quotient is $S/\Phi(S)$, namely $S/Z(S)$ in the unitary case and
$S/S'$ in the Ree case.

The isomorphism supplied by Lemma~\ref{lem:ASrigid} on this quotient
can be made compatible with the standard marking. After identifying
the scalar fields, an intertwiner of the irreducible torus modules is
multiplication by a nonzero field element. A change of field marking
may also introduce a field Frobenius. Both transformations extend to
the whole Sylow group: use the standard diagonal transformations and
field Frobenius, respectively.

After this adjustment the two tame actions have the same action on
$S/\Phi(S)$. Let $B$ be the kernel of
$\Aut(S)\to\Aut(S/\Phi(S))$; it is a $p$-group.
The images of the two tame actions are complements to $B$ in the
preimage of their common cyclic image. Schur--Zassenhaus conjugates
them by an element of $B$, without changing the first quotient.
The central kernel of the point stabilizer acts trivially in both
markings. Choose the point over $0$ fixed by the tame complement
as base point. The quotient homomorphisms in
Lemma~\ref{lem:twists} then agree as marked homomorphisms.
In the Ree case the first application yields equality as marked
$S/Z(S)$-covers, so the second application requires no further
change of marking.
\end{remark}

\section{Equations of the curves}\label{sec:equations}

Throughout this section, $G$ satisfies \eqref{eq:mainhyp} and has
no common fixed point. We use the notation of
Theorem~\ref{thm:main}. In each case $D$ denotes the exponent
which occurs in the one-point equation; its relation with the
central kernel will be determined in the proof.

\subsection{Linear groups}\label{sec:linearcomplete}

\begin{proposition}\label{prop:linearcomplete}
In a noncyclic linear row of Table~\ref{tab:main}, put $D=rt$.
Then
\[
 \cX:\quad y^q-y=x^D,\qquad 2\le D<q,\qquad D\mid q+1.
\]
\end{proposition}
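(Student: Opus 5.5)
We work in a linear row, so $H=G/M$ is $\PSL(2,q)$ or $\PGL(2,q)$ with $q=p^a$, $a\ge2$. The Sylow subgroup $S\cong(\F_q,+)$ is elementary abelian and is the root group of the natural action. The plan is to apply Lemma~\ref{lem:ASrigid} to the $S$-cover $\cX\to\cX/S\cong\PP^1$. By Lemma~\ref{lem:rational} this cover is branched only at the image of $P$, which we place at $x=\infty$.

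\textbf{Step 1: the hypotheses of Lemma~\ref{lem:ASrigid}.} Take the tame complement $D_0=G_{P,P'}$, with $P'\in\Omega\setminus\{P\}$. It is cyclic of order $m=r b$, where $b=(q-1)/\nu$. It fixes the image of $P'$, which we place at $x=0$, and it acts on the $x$-line by $x\mapsto\zeta x$ with $\zeta$ of order $m$. Faithfulness of this action follows from Lemma~\ref{lem:intersection}: a nontrivial element trivial on $\cX/S$ would lie in $S$. Conjugation on $S$ factors through $H$, so its kernel is $M$, of order $r$. The image is the split torus acting on $\F_q$ by squares (or by all of $\F_q^\times$), and this action is irreducible over $\F_p$ because these elements generate $\F_q$ as a field.

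\textbf{Step 2: only one lower jump.} A nonzero element of $S$ is conjugated by the torus image into all of $(\F_q^{\times})^{2}$ or $\F_q^\times$. Both sets span $\F_q$ additively and act transitively on lines up to scalars. Hence the ramification filtration, which consists of $D_0$-stable subgroups, is a chain of $D_0$-submodules. By irreducibility it has a single jump. Lemma~\ref{lem:ASrigid} then gives $D=rt$ with $(t,b)=1$ and $p\nmid D$.

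\textbf{Step 3: the genus.} With one jump $D$, formula \eqref{eq:hurwitzS} with $h=0$ gives $2g=(q-1)(D-1)$. Comparing with \eqref{eq:linear}, $2g=(q-1)(rt-1)$, confirms that the jump equals $rt$ and links it to the central kernel.

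\textbf{Step 4: the obstruction \eqref{eq:ASobstruction}.} We must rule out integers $0<h<t$, $p\nmid h$, $0\le k<a$ with $b\mid tp^k-h$. Here $t\mid (q+1)/\nu$, so $t\le (q+1)/\nu$, while $b=(q-1)/\nu$. The number $tp^k \bmod b$ must equal $h<t$. Since $p^a\equiv1\pmod b$ up to the factor $\nu$, the key relation is $t q\equiv t\pmod b$. We expect this to force $t(p^k-1)\equiv h-t$, and from there either $h\equiv t$, which is impossible since $0<h<t\le b+2$, or a size contradiction. This is the main obstacle: it is an elementary but fiddly congruence argument. We would first try writing $q+1=\nu t t'$ and reducing $tp^k$ modulo $b$ using $p^a\equiv1$. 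If the congruence does not close in general, we would instead bound the conductor directly, as in Lemma~\ref{lem:twists}, using the order bound $|G|>24g(g-1)$ to make $t$ small relative to $b$.

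\textbf{Step 5: the remaining constraints.} Once the obstruction is excluded, we obtain $y^q-y=x^D$. Now $r\mid n=q+1$ by \eqref{eq:rdivn}, and $t\mid(q+1)/\nu$ with $(t,b)=1$, where $\gcd(r,b)$ is controlled by the centrality of $M$. Together these give $D\mid q+1$. Since $g\ge4$ we have $D\ge2$, and since $p\nmid D$ and $D\mid q+1$ we have $D\le q+1$. Finally, $D=q+1$ would give $g=q(q-1)/2$, the Hermitian case, which must be excluded numerically as in the treatment of $f=x^{p+1}$ in Section~\ref{sec:cyclic}. In that case $|G|\le (q+1)q(q-1)^2$, whereas $24g(g-1)\approx 6q^4$, a contradiction; hence $D<q$.
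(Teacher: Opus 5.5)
Your Steps~1--3 and the final exclusion of $D=q+1$ follow the paper's route. However, Step~4 is left undone, and it is exactly where an idea is needed: rewriting $tp^k\equiv h$ as $t(p^k-1)\equiv h-t$ only restates the congruence. The missing trick is to multiply $b\mid tp^k-h$ by $\nu e$ and use $\nu e t=q+1\equiv 2\pmod{q-1}$, which gives $\nu e h\equiv 2p^k\pmod{q-1}$. For $1\le k<a$ one has $0<\nu eh<\nu et=q+1$ and $2p\le 2p^k\le 2q/p<q-1$. Hence $\nu eh=2p^k$, which is impossible because $p\nmid \nu eh$. For $k=0$ one needs $0<t-h<t<b$, and this uses $e>1$; your bound $t\le b+2$ does not suffice.

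Step~5 contains a false inference. The conditions $r\mid q+1$, $t\mid (q+1)/\nu$ and $(t,b)=1$ do not imply $rt\mid q+1$. Take $q=3^5$ in the $\PSL$ row with $r=4$, $t=2$, $e=61$. Then these conditions hold, as do $r\mid n$, the quotient-genus condition \eqref{eq:qgenus}, the absence of an obstruction, and even $|G|>24g(g-1)$ (here $g=847$). Yet $D=8\nmid 244$.

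Such data can only be excluded geometrically, using the equation $y^q-y=x^D$ you have just obtained. The Kummer group $C_D$, acting by $x\mapsto\xi x$, fixes pointwise the $q+1$ points above $x=0$ and $x=\infty$. These points form $\Omega$, since $\Omega\setminus\{P\}$ is one $S$-orbit lying over the torus-fixed point $x=0$. Hence $B=\langle G,C_D\rangle$ preserves $\Omega$. A Sylow subgroup of $B$ containing $S$ fixes $P_\infty$ and acts freely on the other $q$ points, so it still has order $q$. Theorem~\ref{thm:kernel} applied to $B$ then places $C_D$ in the central cyclic kernel of $B$ and gives $D\mid n=q+1$ by \eqref{eq:rdivn}. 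Only after this does your numerical exclusion of $D=q+1$ yield $D<q$. In that exclusion the correct estimate is $|G|\le Dq(q^2-1)=q(q-1)(q+1)^2$, not $(q+1)q(q-1)^2$, though the conclusion is unaffected.
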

\begin{proof}
Write $\nu=2$ for $\PSL(2,q)$ and $\nu=1$ for $\PGL(2,q)$.
The cyclic complement has order $m=r(q-1)/\nu$, and its faithful action
on $S=(\F_q,+)$ is irreducible.  There is just one positive lower jump.
Since $\cX/S$ is rational, the genus formula forces that jump to be
$D=rt$.  Lemma~\ref{lem:ASrigid} applies with $b=(q-1)/\nu$.
Here $t=(q+1)/(\nu e)$ and $e>1$.

There is no obstruction \eqref{eq:ASobstruction}.  For $k=0$,
$0<t-h<t<b$; the last inequality holds for $q\ge9$.
For $1\le k<a$, multiply the congruence by $\nu e$ and use
$\nu et=q+1$.  It gives
\[
 \nu eh\equiv2p^k\pmod{q-1}.
\]
The left side is strictly between $0$ and $q+1$; the right side is
between $2p$ and $2q/p<q-1$.  Adding $q-1$ to the right side would
exceed $q+1$.  Equality is the only possibility, but it is impossible
because $p\nmid\nu eh$ and $p\mid2p^k$.
Thus the curve has the stated monomial equation.

The group $C_D$, acting by $x\mapsto\xi x$, fixes the $q+1$ points
\[
 \Omega=\{x=0,\ y\in\F_q\}\cup\{P_\infty\}.
\]
The identification of this set with the original wild orbit follows
also from the construction: the image of $\Omega\setminus\{P_\infty\}$
in $\cX/S$ is the second torus-fixed point $0$.
Put $B=\langle G,C_D\rangle$.  It preserves this set.  Any Sylow
$p$-subgroup of $B$ containing $S$ fixes $P_\infty$ and is free on the
other $q$ points, so it has order exactly $q$.
Theorem~\ref{thm:kernel}, applied to $B$, places $C_D$ in the central
kernel and gives $D\mid q+1$.

If $D=q+1$, then $g=q(q-1)/2$, whereas
\[
 |G|\le Dq(q^2-1)=q(q-1)(q+1)^2
       <24g(g-1)
\]
for $q\ge3$.  This is impossible.  A proper divisor of $q+1$ is less
than $q$, and $D\ge2$ follows from positive genus.
\end{proof}

\subsection{Unitary groups}\label{sec:unitarycomplete}
Put
\[
 A=q^2-q+1,\qquad B=q^2+q+1,\qquad D=rt,
 \qquad b=(q^2-1)/\delta,
\]
where $\delta=(3,q+1)$ for the special unitary row and $\delta=1$ for
the projective unitary row.

\begin{lemma}\label{lem:unitarybounds}
The large-group inequality implies
\[
 t<q,\qquad tq<b,\qquad \delta t<q-1.
\]
\end{lemma}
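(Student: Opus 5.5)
The plan is to turn the large-group hypothesis into an explicit inequality in $q$, $r$, $t$, $\delta$, using only the data of Theorem~\ref{thm:main} for the unitary rows. From \eqref{eq:classification-master} with $n=q^3+1$ and $b=(q^2-1)/\delta$,
\[
 |G|=\frac{r\,q^3(q^3+1)(q^2-1)}{\delta},
\]
and from \eqref{eq:unitary}, with $D=rt$,
\[
 2g=(q-1)\bigl(D(q+1)^2-B\bigr).
\]
Since $t\ge1$ and $q>1$, the three claims are nested. Indeed, $tq<b$ is equivalent to $\delta t<q-1/q$, hence to $\delta t\le q-1$, and the strict form $\delta t<q-1$ is stronger still. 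So I will aim at the single estimate $\delta t<q-1$, together with $t<q$, which it implies. It also implies $tq<b$, since $\delta tq<q(q-1)<q^2-1$.

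First I would dispose of $t=1$. Then $t<q$ is trivial. The claim $\delta<q-1$ holds for $\delta=1$ because $q\ge3$. For $\delta=3$ one has $q\equiv2\pmod3$, so $q\ge5$, and again $\delta<q-1$.

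For $t\ge2$ we have $D\ge2$. Since $B<(q+1)^2$, the factor $D(q+1)^2-B$ exceeds $(D-1)(q+1)^2$. I would use a sharper lower bound than $\tfrac12D(q+1)^2$: namely
\[
 D(q+1)^2-B\ge D(q+1)^2-(q+1)^2+q=(D-1)(q+1)^2+q .
\]
Then $g$ is bounded below by $\tfrac12(q-1)\bigl((D-1)(q+1)^2+q\bigr)$, and $g-1$ is comparable to $g$, since $g\ge4$. Inserting these bounds into $|G|>24g(g-1)$ and cancelling one factor $r(q^2-1)$, I expect an inequality of the shape
\[
 \delta\,r\,(D-1)^2(q+1)^2(q^2-1)\;\lesssim\;\tfrac16\,q^3(q^3+1).
\]
Dividing by $(q+1)^2(q^2-1)\approx q^4$ gives roughly $\delta r(D-1)^2<q^2/6$. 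Using $D-1\ge D/2$ is lossy. I will instead use $(D-1)^2\ge rt^2\cdot(\text{const})$ in the form $r(D-1)^2\ge (t-1)^2$ and treat $t=2$ separately if necessary. This should give $\delta t^2\lesssim q^2/6$ up to lower-order terms. That yields $\delta t\le q\sqrt{\delta/6}\le q/\sqrt2$, which is well below $q-1$ once $q$ is not tiny.

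The main obstacle is keeping the constants tight enough for $\delta=3$. A crude estimate such as $g\ge\tfrac14(q-1)D(q+1)^2$ only gives $\delta t<q\sqrt{2}$, which is useless. So I would keep the exact expressions, write the hypothesis as an explicit polynomial inequality in $q$ and $D$, and bound $r\ge1$ at the end. If lower-order terms spoil the comparison for small $q$, I would finish by a direct check. The relevant cases are $q\in\{3,5,9\}$ with the few admissible $t\mid A/\delta$; for these $g$ and $|G|$ are computed from the displayed formulas, and every case with $\delta t\ge q-1$ violates $|G|>24g(g-1)$.
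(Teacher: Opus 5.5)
Your route is correct in substance, but it differs from the paper's. The paper argues in two stages. First, a crude estimate gives $t<q$: if $t\ge q$, then $D\ge3$, $g>D(q-1)(q+1)^2/3$, and $|G|/g^2<27/2$. For $\delta=1$ it upgrades this to $t\le q-2$ by parity alone, since $t$ divides the odd number $A$ and $q$ is odd; the other two inequalities then follow. Only for $\delta=3$ does it need a sharper estimate, which it obtains by contradiction from $t\ge q/3$ via $D-1\ge r(t-1)\ge r(q-2)/3$, settling $q=5$ by $A/3=7$. You instead run one sharper estimate for both values of $\delta$, built on the exact identity $D(q+1)^2-B=(D-1)(q+1)^2+q$, and close with a finite check. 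This avoids the intermediate bound $t<q$ and the parity argument, at the cost of tracking constants and a few small values of $q$.

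Two details in your sketch need correcting. First, the factor $r$ is on the wrong side of your displayed inequality. Since $g-1>\tfrac12(q-1)(D-1)(q+1)^2$, the hypothesis gives
\[
 \frac{(D-1)^2}{r}<\frac{q^3(q^2-q+1)}{6\delta(q-1)(q+1)^2}<\frac{q^2}{6\delta},
\]
and the bound you need is $(D-1)^2/r\ge r(t-1)^2\ge(t-1)^2$, which follows from $D-1\ge r(t-1)$. Second, what follows is $\delta(t-1)<q\sqrt{\delta/6}$, not $\delta t\le q\sqrt{\delta/6}$. For $\delta=3$ this gives $3t<3+q/\sqrt2$, which is at most $q-1$ only when $q\ge14$. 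So the exceptional cases are $q=3$ with $\delta=1$, where the estimate gives only $t\le2$, and $q=5,11$ with $\delta=3$. The case $q=9$ is already covered by the estimate. In the three exceptional cases $A/\delta$ equals $7$, $7$ and $37$ respectively, a prime, so $e>1$ forces $t=1$, which you handled directly.
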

\begin{proof}
The genus and order are
\[
 g=\frac{q-1}{2}\bigl(D(q+1)^2-B\bigr),\qquad
 |G|=\frac{r q^3(q^3+1)(q^2-1)}\delta.
\]
If $t\ge q$, then $D\ge3$ and
$g>D(q-1)(q+1)^2/3$.  Consequently
\[
 \frac{|G|}{g^2}
 <\frac{9q^3}{\delta r t^2(q-1)}
 \le\frac{9q}{\delta r(q-1)}\le\frac{27}{2}<18.
\]
This contradicts $|G|>24g(g-1)\ge18g^2$, since $g\ge4$.
For $\delta=1$, $t$ divides the odd number $A$; hence $t<q$ implies
$t\le q-2$, proving the other two assertions.

Suppose $\delta=3$.  For $q=5$, the conditions $t\mid A/3=7$ and
$e>1$ give $t=1$.  Otherwise $q\ge11$ and $q\equiv2\pmod3$.
If $t\ge q/3$, then $t-1\ge(q-2)/3$ and
\[
 g>\frac{r(q-1)(t-1)(q+1)^2}{2},\qquad
 \frac{|G|}{g^2}
 <\frac{12q^3}{r(q-1)(q-2)^2}
 \le\frac{12\cdot11^3}{10\cdot9^2}<20.
\]
The rational function in the middle is decreasing for $q\ge11$.
Here $g\ge6$, so $24g(g-1)\ge20g^2$, again a contradiction.
Thus $3t\le q-2$, which proves the lemma.
\end{proof}

\begin{proposition}\label{prop:unitaryjumps}
The Sylow group $S$ has precisely two positive lower jumps, namely
\[
 j_1=D,\qquad j_2=D(q+1).
\]
Its center $Z=[S,S]$ has order $q$, and
\[
 \cX/Z:\quad y^{q^2}-y=x^D.
\]
\end{proposition}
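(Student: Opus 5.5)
The plan is to determine the lower filtration of $S$ from the tame action together with the genus formula, and then read off the quotient by the center. The group theory comes first. By Theorem~\ref{thm:main}, $S$ maps isomorphically onto the standard Sylow group of $\PSU(3,q)$ or $\PGU(3,q)$, since $M$ is a $p'$-group. Hence $S$ is the Heisenberg group of order $q^3$, with $Z(S)=[S,S]=\Phi(S)$ of order $q$ and $S/Z\cong(\F_{q^2},+)$. The tame complement $C_m$, with $m=rb=r(q^2-1)/\delta$, acts on $S/Z$ through $\alpha$ and on $Z$ through $\alpha^{q+1}$, where $\alpha$ ranges over a subgroup of $\F_{q^2}^\times$ of order $b$.

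For $\delta=1$ this subgroup is all of $\F_{q^2}^\times$. For $\delta=3$ it has index $3$, and since $(q^2-1)/3>q+1$ it still generates $\F_{q^2}$ over $\F_p$. In both cases the action on $S/Z$ is irreducible over $\F_p$, with kernel exactly $M$ of order $r$.

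The lower filtration is intersected with a $C_m$-stable filtration, and the graded quotients are $C_m$-modules. Each lower group $(G_P)_i\cap S$ is normal in $G_P$, so it is $C_m$-stable. Every nontrivial normal subgroup of $S$ meets $Z$ nontrivially. Moreover, a $C_m$-stable subgroup strictly between $Z$ and $S$ would give a proper submodule of $S/Z$, which irreducibility forbids. The only remaining possibility to rule out is a proper $C_m$-stable subgroup of $Z$ occurring as a lower group. Here I would use that $\alpha^{q+1}$ generates $\F_q^\times$ when $\delta=1$, and an index-$3$ subgroup of it otherwise, and check that this still acts irreducibly on $Z\cong\F_q$ over $\F_p$. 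The check amounts to showing that $(q-1)/\delta$ does not divide $p^k-1$ for $k<a$, which holds whenever $q\ge5$ with $\delta=3$. So the filtration is $S\supsetneq Z\supsetneq1$, with jumps $j_1<j_2$.

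Next I pin down the jumps. The quotient $\cX/Z\to\cX/S=\PP^1$ is an $(\F_{q^2},+)$-cover branched only at infinity, with single lower jump $j_1$, since upper numbering passes to quotients and the first upper jump equals $j_1$. Lemma~\ref{lem:ASrigid}, applied with $Q=q^2$ and $b=(q^2-1)/\delta$, gives $j_1=rt'$ with $(t',b)=1$. I would then use Hilbert's formula
\[
2g-2=-2q^3+(j_1+1)(q^3-1)+(j_2-j_1)(q-1)
\]
and compare it with \eqref{eq:unitary}. Lemma~\ref{lem:local}, applied on the quotient $Z$ with weight $\alpha^{q+1}$, gives a congruence for $j_2$ modulo $m/(q+1)$. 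Together with Hasse--Arf for the abelian quotient $S/Z$, and the upper-jump integrality needed for the central extension to exist, this should force $j_1=D$ and $j_2=D(q+1)$. The target values are $u_1=D$ and $u_2=j_1+(j_2-j_1)/q^2$, the latter only required to be consistent. I expect the comparison to give a single linear relation $q^2j_1+j_2=D(q^2+q+1)$, up to the constants, so an additional constraint is needed.

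I expect this identification of the jumps to be the main obstacle. For the extra constraint I would first try the weight congruences. The weight $\alpha^{j_1}$ on $S/Z$ and the weight $\alpha^{(q+1)j_2/\ldots}$ on $Z$ must match the commutator weight $\alpha\cdot\alpha^q=\alpha^{q+1}$. The commutator pairing $S/Z\times S/Z\to Z$ is $C_m$-equivariant and compatible with the coefficient maps, which gives $j_2\equiv(q+1)j_1$ modulo $m$. Combined with the linear relation and the bounds $t<q$ and $tq<b$ of Lemma~\ref{lem:unitarybounds}, this congruence should leave only $j_1=D$ and $j_2=D(q+1)$.

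Finally, with $j_1=D$ and $t=t'$, I would check the obstruction \eqref{eq:ASobstruction}, exactly as in Proposition~\ref{prop:linearcomplete}. For $k=0$ the bound $t<b$ suffices. For $k\ge1$, multiplying by $\delta e$ and using $\delta et=A$ reduces the condition to a congruence modulo $q^2-1$. The bounds $tq<b$ and $\delta t<q-1$ place both sides in a range where equality is forced, and equality is impossible because of $p$-divisibility. Lemma~\ref{lem:ASrigid} then gives $\cX/Z: y^{q^2}-y=x^D$.
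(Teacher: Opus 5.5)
There is a genuine gap at the step you flagged yourself: identifying the jumps. Your reduction to the filtration $S\supsetneq Z\supsetneq1$ is fine. (For $\delta=3$ the torus image on $Z$ is in fact all of $\F_q^\times$, since $3\nmid q-1$; irreducibility holds either way.) Your use of Lemma~\ref{lem:ASrigid} to write $j_1=rt'$ with $(t',b)=1$ is also fine. The problems come afterwards.

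\textbf{The linear relation.} The Hurwitz comparison with \eqref{eq:unitary} gives $q(q+1)j_1+j_2=D(q+1)^2$, not $q^2j_1+j_2=D(q^2+q+1)$. The two agree only when $j_1=D$.

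\textbf{The congruence is not justified.} The coefficient maps identify $S/Z$ and $Z$ with torus modules of weights $\zeta^{j_1}$ and $\zeta^{j_2}$. But each identification with the standard module (weights $\alpha$ and $\alpha^{q+1}$) involves its own Frobenius twist, and the two twists are unrelated. Nor does the local theory give the level-$j_2$ coefficient of $[\sigma,\tau]$ as a product of $a_\sigma$ and $a_\tau$: the bilinear leading term sits at level $2j_1$ and has coefficient $(j_1-j_1)a_\sigma a_\tau=0$. Equivariance therefore only yields $j_2\equiv p^l(q+1)j_1\pmod m$ for some unknown $0\le l<a$.

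\textbf{The congruence is too weak even with $l=0$.} By the correct relation, $j_2-(q+1)j_1=r(q+1)^2(t-t')$, so the congruence only says $t'\equiv t\pmod{(q-1)/2}$. For example, take $q=5$, $\delta=1$, $t=3$ (so $e=7$) and $t'=1$. This gives $(j_1,j_2)=(r,78r)$, which satisfies:
\begin{itemize}
\item the correct relation;
\item $j_2\equiv6j_1\pmod{24r}$;
\item $(t',b)=1$ and $p\nmid j_1j_2$;
\item all three bounds of Lemma~\ref{lem:unitarybounds}.
\end{itemize}
Yet $j_1\ne D$.

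\textbf{The missing ingredient} is Hasse--Arf for an abelian \emph{subgroup} of $S$. For the quotient $S/Z$ it is vacuous, since that quotient has a single jump. A maximal abelian subgroup $A_0\le S$ has order $q^2$ and contains $Z$. Its lower filtration is obtained by intersection, namely $A_0\supset Z\supset1$, with jumps $j_1<j_2$ and $[A_0:Z]=q$. Hence its second upper jump $j_1+(j_2-j_1)/q$ is an integer, and
\[
 q\mid j_2-j_1=r\bigl(t(q+1)^2-t'(q^2+q+1)\bigr),
\]
so $t'\equiv t\pmod q$. In the example above, $5\nmid 77r$. Since $j_2>j_1$, one has $t'<t(q+1)^2/(q^2+q+1)<t(1+1/q)<t+1$, using $t<q$. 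Therefore $t'=t$, and the jumps are $D$ and $D(q+1)$.

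\textbf{The final obstruction check} also does not transfer from the linear case. After multiplying by $\delta e$, for $1\le k\le a$ the forced equality is $\delta eh=q^2-1-(q-2)p^k$, and both sides are prime to $p$. So no $p$-divisibility contradiction arises. Argue directly instead:
\begin{itemize}
\item For $k\le a$, one has $0<tp^k-h<tq<b$, so $b\nmid tp^k-h$.
\item For $a<k<2a$, multiply by $p^{2a-k}$ and use $b\mid p^{2a}-1$ to get $b\mid t-hp^{2a-k}$. This integer has absolute value less than $b$. It is nonzero because $p\nmid t$.
\end{itemize}
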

\begin{proof}
The standard unitary Sylow group is a Heisenberg group with center of
order $q$.  The torus acts irreducibly on $S/Z$, of order $q^2$, and
on $Z$, of order $q$.  A proper nontrivial torus-invariant normal
subgroup is therefore $Z$.  To see the normality restriction explicitly,
a subgroup projecting onto $S/Z$ must contain all commutators and is
$S$; a proper subgroup inside $Z$ is excluded by irreducibility.
Since a ramification graded quotient is elementary abelian and a
scalar torus module, the ramification series is $S,Z,1$.

Let $j_1<j_2$ be its jumps.  Faithfulness on $S/Z$ gives
$j_1=rv$ with $(v,b)=1$.  The different and the genus formula give
\begin{equation}\label{eq:unitaryjumpidentity}
 q(q+1)j_1+j_2=rt(q+1)^2.
\end{equation}
A maximal abelian subgroup of $S$ has order $q^2$ and contains $Z$.
Hasse--Arf \cite[Chapter~IV]{SerreLocal} applied to this subgroup, whose lower filtration is obtained
by intersection, gives $q\mid j_2-j_1$.  Thus $v\equiv t\pmod q$.
The strict inequality $j_2>j_1$ in
\eqref{eq:unitaryjumpidentity} gives
$0<v<t(1+1/q)<t+1$, using Lemma~\ref{lem:unitarybounds}.
Therefore $v=t$ and the two jumps are as stated.

Apply Lemma~\ref{lem:ASrigid} to the quotient by $Z$, with additive
group of order $q^2=p^{2a}$.  Its sole jump is $D$.
For $0\le k\le a$, a purported obstruction satisfies
$0<tp^k-h<tq<b$, which is impossible.
For $a<k<2a$, multiply the congruence by $p^{2a-k}$; since
$b\mid p^{2a}-1$, it gives
$b\mid t-hp^{2a-k}$.  This integer has absolute value less than $b$,
so it is zero.  But $p\nmid t$ and $p\mid hp^{2a-k}$.
Thus no obstruction exists and the quotient equation follows.
\end{proof}

\begin{proposition}\label{prop:unitarymodel}
The curve has equations
\begin{equation}\label{eq:Umodel}
 y^{q^2}-y=x^D,\qquad z^q+z=y^{q+1},
 \qquad D\mid q^2-q+1.
\end{equation}
\end{proposition}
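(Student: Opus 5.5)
We compare $\cX$ with an explicit model via Lemma~\ref{lem:twists}. By Proposition~\ref{prop:unitaryjumps}, the quotient $\cX/Z$ is already $y^{q^2}-y=x^D$, with $Z=Z(S)$ of order $q$. On the other side, the cyclic quotient $U(q,D)$ of the Giulietti--Korchm\'aros cover is defined by \eqref{eq:Umodel}. Its $x$-line is the base $\cX/S$, and it carries the same data: the Heisenberg group $S$ acting on the $(y,z)$ coordinates, and a tame cyclic group of order $m=rb$ acting by $x\mapsto\zeta x$ together with the diagonal torus on $(y,z)$.

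The steps, in order, are as follows.

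\textbf{Step 1 (the model).} Check that $U(q,D)$, viewed over $\mathbb A^1_x$, is an $S$-cover with the same jumps $D$ and $D(q+1)$. Its $Z$-quotient is $y^{q^2}-y=x^D$. The tame group acts on $S/Z$ through $\alpha$ and on $Z$ through $\alpha^{q+1}$, as recorded for $H_q$ in Section~\ref{sec:prelim}.

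\textbf{Step 2 (matching markings).} Use Remark~\ref{rem:markings} so that the two marked $S/Z$-quotients coincide as marked equivariant covers. The tame actions on $S$ agree after a Schur--Zassenhaus conjugation inside the $p$-group kernel $B$. The base point is a tame-fixed point over $0$.

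\textbf{Step 3 (no twist).} Apply Lemma~\ref{lem:twists} with $E=Z\cong(\F_q,+)$ and $Q=q$. The tame generator acts on $E$ by a scalar $\beta$. From the weights $1$ and $q+1$ and the relation $j_1=D$, we get $\beta=\zeta^{D(q+1)}$, up to the Frobenius ambiguity. The largest upper jump is
\[
U=D+\frac{q^2}{q^3}\bigl(D(q+1)-D\bigr)=2D,
\]
by \eqref{eq:upperprelim}.

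A twist monomial $x^i$ with $i=p^kh$, $p\nmid h$, $k<a$, $h\le 2D$, and $\zeta^i=\beta$ must satisfy
\[
m\mid p^kh-D(q+1)p^{l},
\]
allowing for the Frobenius choice $l$. Since $r\mid D$ and $m=rb$, we get $r\mid h$, say $h=rh'$. This leaves
\[
b\mid p^kh'-t(q+1)p^{l},\qquad h'\le 2t.
\]
Lemma~\ref{lem:unitarybounds} gives $tq<b$ and $\delta t<q-1$. The absolute value of the difference is then less than $b$, apart from finitely many residues. In those remaining cases we reduce modulo $q+1$ and use $p\nmid h'$, in the same way as in Propositions~\ref{prop:linearcomplete} and~\ref{prop:unitaryjumps}. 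If no monomial survives, the two covers are isomorphic, so $K(\cX)=K(x,y,z)$ satisfies \eqref{eq:Umodel}.

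\textbf{Step 4 (divisibility).} It remains to prove $D\mid q^2-q+1$. Put $B'=\langle G,C_D\rangle$, where $C_D$ acts by $x\mapsto\xi x$. As in the linear case, $B'$ preserves the $q^3+1$ points over $x=0$ and $\infty$, and its Sylow subgroups remain of order $q^3$. Theorem~\ref{thm:kernel} then places $C_D$ in the central kernel, so $D\mid n=q^3+1$.

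Next combine this with $D=rt$, where $t\mid (q^2-q+1)/\delta$. Note that $\gcd(q+1,\,q^2-q+1)\mid 3$. To exclude the $(q+1)$-part of $r$, we use that $G_R$ has order $e\mid(q^2-q+1)/\delta$ and is semiregular. A factor of $r$ dividing $q+1$ would lead to a contradiction with \eqref{eq:qgenus} or with the order bound, as in Proposition~\ref{prop:unitary}. Alternatively, $z^q+z=y^{q+1}$ is invariant under a scalar $x\mapsto\xi x$ only through a compatible action on $y$ and $z$, and this forces $\xi^{q^2-q+1}$ to act trivially.

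\textbf{Expected obstacle.} The main difficulty is Step 3, and specifically the exhaustive exclusion of the congruences $b\mid p^kh'-t(q+1)p^l$ once Frobenius twists are allowed, together with getting $\beta$ exactly right. The second difficulty is the final divisibility $D\mid q^2-q+1$ rather than merely $D\mid q^3+1$. For that we would first try the explicit lifting argument on the model: the central $C_D$ has to commute with the Heisenberg group action.
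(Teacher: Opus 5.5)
\textbf{Steps 1--3.} These follow the paper's route: the standard Heisenberg pullback, the markings of Remark~\ref{rem:markings}, and Lemma~\ref{lem:twists} on the central $\F_q$-extension. Step~3 does close, but two details need correcting.
\begin{itemize}
\item The largest upper jump is $U=D+\frac{q}{q^3}\,Dq=D(1+1/q)$, because the second lower group is $Z$, of order $q$. Your value $2D$ is only an overestimate, which is harmless.
\item There is no Frobenius ambiguity $l$. After the adjustment in Remark~\ref{rem:markings} the two tame actions on $S$ coincide, so $\beta=\zeta^{D(q+1)}$ exactly. With $l=0$ your reduction finishes immediately, since $0<p^kh'\le 2tq/p<t(q+1)<b$ by $\delta t<q-1$. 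No residue analysis is needed.
\end{itemize}

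\textbf{The gap is the second half of Step~4.} Once $D\mid q^3+1$ is known, everything rests on excluding factors of $q+1$ from $D$, and neither of your suggestions does this.
\begin{itemize}
\item Condition \eqref{eq:qgenus} does not involve $r$, and the order bound does not exclude small $r$. Take $q=5$, the $\PGU(3,5)$ row, $t=1$, $e=21$, $r=D=2$. This gives $g=82$, $|G|=756000>24\cdot82\cdot81$, $g(\cX/M)=10$ and $r\mid n=126$. So every numerical condition of Theorem~\ref{thm:main} holds, yet $2\nmid 21$.
\item Your alternative is vacuous. The map $x\mapsto\xi x$ with $\xi^D=1$ fixes $y,z$ and preserves both equations for every $D$. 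It also commutes automatically with the Heisenberg translations, which fix $x$.
\item Abstractly, $C_D\times\PSU(3,q)$ is a perfectly good group. The obstruction must therefore be geometric: a lifting obstruction for elements that move $P_\infty$.
\end{itemize}

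\textbf{The missing argument.} The paper proceeds as follows.
\begin{enumerate}
\item Choose $\sigma\in G$ whose image on $H_q=\cX/C_D$ interchanges two rational points $P_\infty,P_0$. It exists because that image contains $\PSU(3,q)$.
\item Centrality of $C_D$ forces $\sigma(x)=hx$ with $h\in K(H_q)$. Comparing $x^D=f$, where $f=y^{q^2}-y$, with its image under $\sigma$ gives $h^D=\sigma(f)/f$.
\item Since $\operatorname{div}(f)=\sum_{P\in H_q(\F_{q^2})}P-(q^3+1)P_\infty$ and the rational points are permuted, $\operatorname{div}(h)=\frac{q^3+1}{D}(P_\infty-P_0)$.
\item The class $[P_0-P_\infty]$ has exact order $q+1$. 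A tangent-line function has divisor $(q+1)(P_0-P_\infty)$. A proper divisor of $q+1$ is smaller than $q$, which is the least positive nongap at $P_\infty$.
\item Hence $q+1\mid(q^3+1)/D$, which is equivalent to $D\mid q^2-q+1$.
\end{enumerate}
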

\begin{proof}
The standard Heisenberg cover given by the first two equations is the
tame pullback, by $u=x^D$, of the Sylow quotient of the Hermitian curve.
It is connected because $p\nmid D$, and has lower jumps $D,D(q+1)$.
One may compute the base jumps directly with the uniformizer $y/z$
on $z^q+z=y^{q+1}$: its difference under a noncentral Heisenberg
translation has valuation $2$, and under a nonidentity central
translation has valuation $q+2$. Tame base change multiplies both
lower jumps by $D$.
Write the torus action on the marked quotient as
\[
 x\longmapsto\zeta x,\qquad y\longmapsto\alpha y,
 \qquad \alpha=\zeta^D\in\F_{q^2}^\times.
\]
On the center the multiplier is
$\beta=\alpha^{q+1}\in\F_q^\times$.
The standard transformations
$(y,z)\mapsto(y+a,z+a^qy+c)$, with $a\in\F_{q^2}$ and
$c^q+c=a^{q+1}$, and the diagonal transformations verify the required
markings as in Remark~\ref{rem:markings}.

Both the given cover and the standard cover have largest upper jump
$U=D(1+1/q)$.  Apply Lemma~\ref{lem:twists} to their central
$\F_q$-extensions.  A nonzero $q$-reduced twist monomial has exponent
$i=p^kh$, with $0\le k<a$ and $h\le U$, and must satisfy
\[
 i\equiv W:=D(q+1)\pmod{m},\qquad m=r(q^2-1)/\delta.
\]
But Lemma~\ref{lem:unitarybounds} gives $0<W<m$, while
\[
 0<i\le p^{a-1}U=\frac{W}{p}<W.
\]
This is impossible.  The twist is zero, proving the two equations.

It remains to prove the divisibility; there is nothing to prove if
$D=1$. For $D>1$, every nonidentity element of $C_D$, acting on $x$,
fixes exactly the set $\Omega$ of $q^3+1$ distinguished points.  As in the proof of
Proposition~\ref{prop:linearcomplete}, the group
$\langle G,C_D\rangle$ has Sylow order $q^3$, and its kernel on this
set is central and cyclic.  Thus $C_D$ is central and $D\mid q^3+1$.

The diagonal torus shows that the old kernel $M=C_r$ is contained
in $C_D$; its intersection with $G$ is exactly $M$, since it fixes
$\Omega$ pointwise. The quotient by $C_D$ is the Hermitian curve $H_q$.
The image of $G$ contains $\PSU(3,q)$, so it contains an automorphism
interchanging two distinct rational points $P_\infty,P_0$.
For $f=y^{q^2}-y$ on $H_q$,
\[
 \operatorname{div}(f)=\sum_{P\in H_q(\F_{q^2})}P-(q^3+1)P_\infty.
\]
A central lift sends $x$ to $hx$.  Taking divisors gives
\[
 \operatorname{div}(h)=\frac{q^3+1}{D}(P_\infty-P_0).
\]
The divisor class $[P_0-P_\infty]$ has exact order $q+1$.
Indeed a tangent coordinate has divisor $(q+1)(P_0-P_\infty)$,
whereas the least positive nongap at a rational point of $H_q$ is $q$.
A proper divisor of $q+1$ is smaller than $q$, so no smaller order is
possible.  Consequently $q+1\mid(q^3+1)/D$, or $D\mid q^2-q+1$.
\end{proof}

\subsection{Ree groups}\label{sec:reecomplete}
Write
\[
 q=3q_0^2=3^{2a+1},\quad q_0=3^a\ge3,\quad
 n=q^3+1,\quad m_-=q-3q_0+1.
\]

\begin{lemma}\label{lem:reebound}
The tame stabilizer order in the Ree row satisfies $e>3q_0$.
The largest upper jump $U$ of $\cX/\cX^S$ satisfies $U<rq_0$.
\end{lemma}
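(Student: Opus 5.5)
The plan is to derive both inequalities from the two numerical identities of Theorem~\ref{thm:main}, specialized to the Ree row. There $n=q^3+1$, $s=q^3$, $b=q-1$, and
\[
 |G|=rn(n-1)(q-1),\qquad 2g-2=n\Bigl(\frac{r(q-1)}e-1\Bigr).
\]
By Table~\ref{tab:main} and Proposition~\ref{prop:ree}, $e$ is odd and divides $(q+1)/4$, $q-3q_0+1$, or properly divides $q+3q_0+1$.

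\textbf{The bound $e>3q_0$.} Suppose $e\le3q_0$. Then
\[
 \frac{r(q-1)}e-1\ \ge\ \frac{r(q-1)}{3q_0}-1\ =\ r\Bigl(q_0-\frac1{3q_0}\Bigr)-1,
\]
so $g-1\ge\frac n2\bigl(r(q_0-\tfrac1{3q_0})-1\bigr)$. Insert this into $|G|>24g(g-1)\ge24(g-1)^2$ and use $n-1<n$ and $q-1<3q_0^2$. This gives, up to negligible terms,
\[
 \frac{r\cdot3q_0^2\cdot4}{\bigl(r(q_0-\tfrac1{3q_0})-1\bigr)^2}\ >\ 24,
\]
which is roughly $q_0^2>2r(q_0-1)^2$ and fails once $q_0\ge9$, or once $r\ge2$ and $q_0=3$.

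The remaining case is $q_0=3$, $q=27$, $r=1$. There the only admissible $e\le9$ is $e=7$, a divisor of $(q+1)/4=7$; neither $19$ nor the proper divisors of $37$ qualify. With $e=7$ I would simply evaluate: $g-1=19684\cdot19/14\approx2.67\cdot10^4$, and $24g(g-1)\approx1.7\cdot10^{10}$ exceeds $|G|=19684\cdot19683\cdot26\approx1.0\cdot10^{10}$. So this case is excluded as well.

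\textbf{The bound $U<rq_0$.} Since $\cX/S$ is rational (Lemma~\ref{lem:rational}) and $S$ ramifies only at $P$, \eqref{eq:hurwitzS} gives $T=2g-2+2q^3$. Let $j_3$ be the last lower jump and $U=\varphi(j_3)$. The Herbrand function gives the exact identity
\[
 sU=\sum_{i=1}^{j_3}|S_i|=T+j_3.
\]
The last nontrivial ramification group has order at least $p$ and persists through index $j_3$, so $T\ge j_3(p-1)$ and hence $j_3\le T/2$. Therefore
\[
 U\le\frac{3T}{2q^3}
 =\frac{3}{2q^3}\Bigl(n\Bigl(\frac{r(q-1)}e-1\Bigr)+2q^3\Bigr)
 <\frac32\Bigl(1+\frac1{q^3}\Bigr)\frac{r(q-1)}e+3.
\]

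Now insert $e>3q_0$. Since $e$ is odd, $e\ge3q_0+1$. This gives $U<\tfrac32\,r(q_0-\tfrac1{3q_0})+3$, which falls short of $rq_0$. I expect this loss of constants to be the main obstacle. I would remove it by first sharpening $j_3\le T/2$ using the actual filtration shape $S\supset S'\supset Z(S)\supset1$, with orders $q^3,q^2,q$. Because $|S_i|\ge q$ for all $i\le j_3$, one has $T\ge j_3(q-1)$, so $j_3\le T/(q-1)$ and $sU\le T(1+\tfrac1{q-1})$. Also the additive constant is really $2q^3/s=2$, not $3$.

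That leaves $U\lesssim r(q-1)/e+2$. If this is still not below $rq_0$ for small $r$, I would improve the lower bound on $e$ from the divisibility list. The admissible $e$ exceeding $3q_0$ are in fact near $(q+1)/4$ or near $q-3q_0+1$, because small divisors are excluded exactly as in the first step. This gives $r(q-1)/e\le4r+O(1/q)$, comfortably below $rq_0$ for $q_0\ge9$. The case $q_0=3$ is checked directly, with $e\in\{7,19\}$, and $e=7$ already excluded above.
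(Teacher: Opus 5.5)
Your proof of $e>3q_0$ is correct and is essentially the paper's argument. The paper reduces to $r=1$ via $g\ge rg_1$ instead of carrying $r$ through, and it disposes of $q_0=3$, $e=7$ by the same direct evaluation.

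The gap is in the bound $U<rq_0$. From \eqref{eq:hurwitzS} with $h=0$ and $s=q^3$ one gets $T=\sum_{i\ge1}(|S_i|-1)=2g-2+q^3+1$, not $2g-2+2q^3$. Your value is the full different exponent, which includes the $i=0$ term $q^3-1$; that term does not belong in the identity $q^3U=\sum_{i=1}^{j_3}|S_i|=T+j_3$. This spurious term is the source of your additive constant. With your $T$, even the sharpened estimate only gives $U<rn/(eq^2)+(q^2+q+1)/q^2$. Using $e\ge3q_0+1$, this is roughly $r(q_0-\tfrac13)+1$, which is not below $rq_0$ for $r\le3$, in particular not for $r=1$.

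Your proposed repair is false. Admissible values of $e$ above $3q_0$ need not lie near $(q+1)/4$ or $q-3q_0+1$, because proper divisors of $q-3q_0+1$ occur. For $q=3^{17}$, $q_0=3^8$, one has $7\mid q-3q_0+1$. Then $e=(q-3q_0+1)/7$ with $r=1$ satisfies every constraint, including $|G|>24g(g-1)$ by a wide margin; this is $R(q,7)$ with $G=\Ree(q)$. Yet $(q-1)/e\approx7>4$. The size inequality from your first step only forces $r(q-1)/e$ below about $q_0\sqrt{r/2}+1$, not below $4r$.

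The fix is simply to use the correct $T$. Since $2g-2=n(r(q-1)/e-1)$, one gets exactly $T=2g-2+n=rn(q-1)/e$, with no additive term. Your sharpened estimate $T>(q-1)j_3$ is strict because $|S_1|=q^3$, and it gives $q^3U=T+j_3<\frac{q}{q-1}T=\frac{rnq}{e}$, that is, $U<rn/(eq^2)$. This is $<rq_0$ because $e\ge3q_0+1$ and $n<q^2q_0(3q_0+1)$. It is precisely the paper's computation, written there as $q^2j_1+qj_2+j_3=rn/e$ and $q^2U<rn/e$, and no information about the divisors of $e$ is needed.

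You should also prove $|S_{j_3}|\ge q$ rather than quoting ``the actual filtration shape,'' which for this $\cX$ is established only inside this lemma. The argument is short:
\begin{itemize}
\item each $S_i$ is normal in $G_P$, hence stable under the tame complement;
\item a nontrivial normal subgroup of $S$ meets $Z(S)$ nontrivially;
\item the complement acts irreducibly on $Z(S)$, through $\lambda^{2q_0+1}$ with $(q-1,2q_0+1)=1$.
\end{itemize}
Hence the last nontrivial $S_i$ contains $Z(S)$.
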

\begin{proof}
Assume $e\le3q_0$.  Put $g_1=1+n(q-1-e)/(2e)$, the genus from the
genus formula with $r=1$.  Here $g_1>1$, and
\[
 g=rg_1+\frac{(r-1)(n-2)}2\ge rg_1.
\]
It is enough to disprove the size inequality for $r=1$.
For $q_0\ge9$, put $c=q-1-3q_0$.  Then
$c\ge\frac45(q-1)$ and $2c^2>q(q-1)$.
Since $g_1\ge1+nc/(6q_0)$ and $n>q^3$,
\[
 24g_1(g_1-1)>
 \frac{2n^2c^2}{q}>nq^3(q-1).
\]
For $q_0=3$, the possible nontrivial orders divide the primes
$7,19,37$.  The value $37$ is excluded by the quotient genus, while
$e=7$ gives $g_1=26715$ and
\[
 |\Ree(27)|=10073444472
 <17127948240=24g_1(g_1-1).
\]
Thus $e>3q_0$ in all cases.

We next determine the ramification filtration.
The Ree Sylow group $S$ has $|S|=q^3$, $|S'|=q^2$, and
$|Z(S)|=q$.  The torus acts on the three successive factors by
\[
 \lambda,\qquad\lambda^{q_0+1},\qquad\lambda^{2q_0+1},
 \qquad \lambda\in\F_q^\times.
\]
The three modules are irreducible; the last two are not isomorphic.
These facts follow directly from the standard transformations
\begin{align*}
 v&\mapsto Av+B,\\
 y&\mapsto A^{q_0+1}y+AB^{q_0}v+C,\\
 z&\mapsto A^{2q_0+1}z-A^{q_0+1}B^{q_0}y+AB^{2q_0}v+E
\end{align*}
with $A\in\F_q^\times$, $B,C,E\in\F_q$; see \cite{Skabelund}.
For example $(q-1,q_0+1)=2$, $(q-1,2q_0+1)=1$; the resulting
scalar fields are $\F_q$.  Nonisomorphism of the last two modules
also follows by cyclically rotating their base-$3$ digits.

The only proper nontrivial torus-invariant normal subgroups are $S'$
and $Z(S)$.  A subgroup surjecting onto $S/S'$ is $S$ by the
Frattini property; a subgroup of $S'$ projecting nontrivially onto
$S'/Z(S)$ must contain $[S,S']=Z(S)$.  Irreducibility finishes the
argument.  The last two factors cannot merge into a single
ramification quotient, since such a quotient is a scalar, hence
isotypic, torus module.  Thus the three positive lower jumps are
$j_1<j_2<j_3$, with groups $S,S',Z(S)$.
The different and the genus formula give
\[
 q^2j_1+qj_2+j_3=\frac{rn}{e}.
\]
Therefore
\[
 U=j_1+\frac{j_2-j_1}{q}+\frac{j_3-j_2}{q^2}
 <\frac{rn}{e q^2}<rq_0.
\]
For the last strict inequality use $e\ge3q_0+1$ and
$n=q^3+1<q^2q_0(3q_0+1)$.
\end{proof}

\begin{proposition}\label{prop:reemodel}
In the Ree row put $D=rt$.  Then
\begin{equation}\label{eq:Rmodel}
 \begin{split}
 v^q-v&=x^D,\\
 y^q-y&=v^{q_0}(v^q-v),\\
 z^q-z&=v^{2q_0}(v^q-v),
 \end{split}
 \qquad D\mid m_-,\qquad e=\frac{m_-}{t}.
\end{equation}
In particular the other two torus branches in Table~\ref{tab:main}
do not occur.
\end{proposition}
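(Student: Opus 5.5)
The proof follows the unitary case (Propositions~\ref{prop:unitaryjumps} and~\ref{prop:unitarymodel}), with one more layer of the Sylow filtration. The plan has four steps: pin down the lower jumps, identify the first Artin--Schreier layer, rule out twists in the two upper layers, and then derive the divisibility and the value of $e$.

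\emph{Step 1: the jumps.} By Lemma~\ref{lem:reebound}, the filtration is $S\supset S'\supset Z(S)\supset1$ with jumps $j_1<j_2<j_3$. The identity
\[
 q^2j_1+qj_2+j_3=\frac{rn}{e}
\]
holds, and $U<rq_0$. The tame generator $\tau$ of the point stabilizer has order $m=r(q-1)$ and acts on $x$ by a primitive $m$-th root $\zeta$. By the coefficient map of Lemma~\ref{lem:local}, it acts on the $i$-th graded quotient by $\zeta^{j_i}$. Comparing with the torus weights $\lambda,\lambda^{q_0+1},\lambda^{2q_0+1}$, where $\lambda=\zeta^{j_1}$ up to Frobenius twists, gives the following.
\begin{itemize}
\item[(a)] $j_1=rv$ with $(v,q-1)=1$.
\item[(b)] $j_2\equiv j_1(q_0+1)3^{k}$ and $j_3\equiv j_1(2q_0+1)3^{l}\pmod m$.
\end{itemize}
Next I apply Hasse--Arf to abelian subgroups containing $Z(S)$: $S'$, and abelian subgroups of order $q^2$ meeting $S\setminus S'$, with their filtrations obtained by intersection. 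This gives $q\mid j_3-j_2$ and an analogous congruence linking $j_2-j_1$. Together with the upper bound $U<rq_0$, which bounds $j_1<rq_0$ and hence $v<q_0$, these congruences should force
\[
 (j_1,j_2,j_3)=D\,(1,\;3q_0+1,\;q+3q_0+1),\qquad D=rv .
\]
These are the standard Ree jumps scaled by the tame base change. I expect this step to be the main obstacle: the residues modulo $m$ leave several candidate lifts, and I would first try to eliminate all but one using the size bound from $U<rq_0$ and the strict monotonicity of the jumps.

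\emph{Step 2: consequences of the jumps.} Substituting the jumps into the identity gives
\[
 \frac{rn}{e}=D(q+1)(q+3q_0+1).
\]
Since $n=(q+1)m_-(q+3q_0+1)$, this yields $e=rm_-/D$. Writing $D=rt$ gives $e=m_-/t$, so $e\mid m_-$. The branches $e\mid(q+1)/4$ and $e\mid q+3q_0+1$ of Table~\ref{tab:main} are therefore excluded, apart from the trivial overlap $e=1$, which is not allowed.

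\emph{Step 3: the equations.} Apply Lemma~\ref{lem:ASrigid} to $S/S'\cong(\F_q,+)$, which has single jump $D$ and $b=q-1$. Since $e>3q_0$, we have $t<m_-/(3q_0)<q_0$. A purported obstruction $b\mid t3^k-h$ with $0<h<t$ then satisfies $0<|t3^k-h|<q-1$ after Frobenius rescaling, exactly as in the unitary proof. Hence $v^q-v=x^D$. For the comparison cover I take the pullback of the Ree Sylow cover by $u=x^D$, whose jumps are $D(1,3q_0+1,q+3q_0+1)$. Then I apply Lemma~\ref{lem:twists} twice, with markings fixed as in Remark~\ref{rem:markings}: first to $S/Z(S)\to S/S'$, and then to $S\to S/Z(S)$. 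A twist monomial would have exponent $i=3^kh$ with $k<2a+1$ and $h\le U<rq_0$. It would also have to satisfy $i\equiv j_2$, respectively $i\equiv j_3$, modulo $m$. But $0<i\le 3^{2a}U<j_2<m$, or $<j_3<m$ respectively, using $t<q_0$. So both twists vanish, which gives the model~\eqref{eq:Rmodel}.

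\emph{Step 4: divisibility.} Finally, $D\mid m_-$. The group $C_D$ acting on $x$ fixes $\Omega$ pointwise, so Theorem~\ref{thm:kernel}, applied to $\langle G,C_D\rangle$, makes $C_D$ central and gives $D\mid n$. The quotient of $\cX$ by $C_D$ is the Ree curve. An element of $\Ree(q)$ swapping two rational points $P_\infty,P_0$ lifts to a map $x\mapsto hx$ with
\[
 \operatorname{div}(h)=\frac{n}{D}(P_\infty-P_0).
\]
The class $[P_0-P_\infty]$ on the Ree curve has order $(q+1)(q+3q_0+1)$. I would show this using the pole divisor of the standard functions together with the first nongaps $q,\,q+q_0,\,q+2q_0,\dots$ at a rational point, mirroring the Hermitian argument. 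It follows that $(q+1)(q+3q_0+1)$ divides $n/D$, that is, $D\mid m_-$.
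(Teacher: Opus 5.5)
Your outline (first layer via Lemma~\ref{lem:ASrigid}, two applications of Lemma~\ref{lem:twists}, then the central-lift divisor argument) is the paper's. However, three steps fail as written.

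\textbf{Step 3: the twist exclusion.} This is the decisive gap. Here $Q=q=3^{2a+1}$, so the admissible exponents are $i=3^kh$ with $k\le 2a$, and $3^k$ can be as large as $q_0^2$. Since $U\ge j_1=D$, you get $3^{2a}U\ge q_0^2D>(3q_0+1)D=j_2$ as soon as $q_0\ge9$. For example, $q_0=9$, $D=1$, $h=1$, $k=4$ gives the admissible $i=81>28=j_2$. Also $j_3=D(q+3q_0+1)>r(q-1)=m$ always. So the chain $i\le 3^{2a}U<j_2<m$ is false.

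The unitary case is different: there $qU$ equals the weight $D(q+1)$, so the exponents are automatically below it. In the Ree case large exponents are admissible, and you must show that the congruence itself has no solution. The paper does this arithmetically:
\begin{itemize}
\item The condition $i\equiv r\ell(q_0+1)\pmod{r(q-1)}$ forces $r\mid h$. Writing $h=rh_0$ with $0<h_0<q_0$, one needs $3^kh_0\equiv\ell(q_0+1)$, respectively $\ell(2q_0+1)$, modulo $q-1$.
\item Multiplication by $3$ modulo $3^{2a+1}-1$ cyclically rotates the $2a+1$ base-$3$ digits. The digits of $h_0<3^a$ therefore stay in a cyclic window of $a$ positions.
\item The numbers $\ell+\ell 3^a$ and $\ell+2\ell 3^a$ are both less than $q-1$ and involve no carry. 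Since $3\nmid\ell$, they have nonzero digits at positions $0$ and $a$, which are at cyclic distance $a$ and cannot lie in such a window.
\end{itemize}
This cyclic-digit argument is the missing idea.

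\textbf{Step 1: the jumps.} You leave this open (``should force''), but it is not needed, and Steps 2 and 3 should not be routed through it.
\begin{itemize}
\item The weights required by Lemma~\ref{lem:twists} are the torus weights $D(q_0+1)$ and $D(2q_0+1)$, up to Frobenius. They are known from the group, not from the jumps of $\cX$.
\item The conductor bound is $U<rq_0$ from Lemma~\ref{lem:reebound}. This bound also gives $j_1=r\ell$ with $\ell<q_0$ and $3\nmid\ell$ directly. So the first application of Lemma~\ref{lem:ASrigid} does not need $t<q_0$ from Step 2.
\item Once both twists vanish, $\cX$ is the standard pullback, and its jumps are $D(1,3q_0+1,q+3q_0+1)$.
\item Then $e=m_-/\ell$ follows by comparing $2g-2=n(D(q-1)/m_--1)$ with the genus formula.
\end{itemize}
Your Step 2 computation is correct, but it belongs after Step 3.

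\textbf{Step 4: the nongaps.} You use the wrong nongaps: $q,q+q_0,q+2q_0$ are the Suzuki values. With least nongap $q$ you cannot exclude proper divisors of $(q+1)(q+3q_0+1)$ such as $q+1$ or $q+3q_0+1$. At a rational point of the Ree curve the least positive nongap is $q^2$:
\begin{itemize}
\item $v$ has pole order $q^2$ there.
\item Conversely, the $q^3$ affine rational points force $q\deg f\ge q^3$ for every nonconstant $\F_q$-function $f$ with poles only at $P_\infty$. Base change of Riemann--Roch spaces transfers this bound to the geometric nongaps.
\end{itemize}
Combine this with a function of divisor $(n/m_-)(P_0-P_\infty)$, such as Skabelund's $w_8$, and with the inequality $(q+1)(q+3q_0+1)<2q^2$. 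Together these give the exact order $n/m_-$ of $[P_0-P_\infty]$, and hence $D\mid m_-$.
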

\begin{proof}
The first ramification quotient is faithful modulo the central kernel.
Write its jump as $j_1=r\ell$, where
$(\ell,q-1)=1$.  Lemma~\ref{lem:reebound} gives
$1\le\ell<q_0$, and $3\nmid\ell$.
Apply Lemma~\ref{lem:ASrigid} to $\cX/S'$.
For a purported obstruction
$(q-1)\mid\ell3^k-h$, $0<h<\ell$, if $k\le a$, the absolute size is
less than $q-1$.  If $k\ge a+1$, multiply by
$3^{2a+1-k}\le q_0$ to get
$(q-1)\mid\ell-h3^{2a+1-k}$; again its absolute size is less than
$q-1$, and equality would contradict $3\nmid\ell$.
Thus
\[
 \cX/S':\quad v^q-v=x^{r\ell}.
\]

Set $D=r\ell$ temporarily and compare with the standard tame pullback
of the Ree Sylow cover, namely the three equations in
\eqref{eq:Rmodel}.  The standard lower jumps are
\[
 D,\quad D(3q_0+1),\quad D(q+3q_0+1);
\]
these follow, for example, from \cite[preprint, Proposition~49]{GMQZ}, with
tame base change.  Its last upper jump is
$D(1+1/q_0+1/q)<rq_0$, because $\ell\le q_0-1$.
The given cover satisfies the same strict upper bound by
Lemma~\ref{lem:reebound}.

First compare the central extensions with group $S/Z(S)$ over the
common quotient with group $S/S'$.  Their kernel is $S'/Z(S)=\F_q$.
The torus order is $r(q-1)$ and its weight on this kernel is
$D(q_0+1)$.  By Lemma~\ref{lem:twists}, a twist monomial would have
exponent $i=3^kh$, with $0\le k<2a+1$, $3\nmid h$, $h<rq_0$, and
\[
 i\equiv r\ell(q_0+1)\pmod{r(q-1)}.
\]
Thus $r\mid h$, and, on writing $h=rh_0$,
\begin{equation}\label{eq:reedigits}
 3^kh_0\equiv\ell(q_0+1)\pmod{q-1},\qquad
 0<h_0<q_0.
\end{equation}
Multiplication by $3$ modulo $3^{2a+1}-1$ cyclically rotates
$2a+1$ base-$3$ digits.  The nonzero digits of $h_0<3^a$ fit in a
cyclic interval of $a$ positions.  Those of
$\ell(q_0+1)=\ell+\ell3^a$ do not: positions $0$ and $a$ are both
nonzero, because $3\nmid\ell$, and no such cyclic interval contains
both positions.  This disproves \eqref{eq:reedigits}.
The first twist is zero.

We now have the same marked $S/Z(S)$-quotient.  Compare the final
central extensions, with kernel $Z(S)=\F_q$ and torus weight
$D(2q_0+1)$.  The identical argument replaces the right side of
\eqref{eq:reedigits} by $\ell(2q_0+1)$.
This integer is less than $q-1$, and its digits in positions $0$ and
$a$ are again nonzero; there is no carry from the lower block
$\ell<3^a$.  The same cyclic-interval argument excludes the twist.
This proves the three equations.

The genus of this degree-$D$ tame cover of the Ree curve is
\[
 2g-2=n\left(\frac{D(q-1)}{m_-}-1\right).
\]
Comparing with the genus formula gives $e=m_-/\ell$.
Consequently $\ell\mid m_-$, and $\ell$ is the parameter $t$ in the
assertion.

Finally, the divisibility is immediate if $D=1$. If $D>1$,
every nonidentity element of $C_D$, acting on $x$, fixes exactly
the natural set $\Omega$ of $q^3+1$ points.  Enlarge $G$ by $C_D$ as before.  The Sylow order
remains $q^3$ and the central-kernel theorem gives both centrality of
$C_D$ and $D\mid n$.
Here again the diagonal torus identifies $G\cap C_D$ with the
original $C_r$, so the image of $G$ on the quotient is $\Ree(q)$.
On the Ree quotient, $f=v^q-v$ has divisor
$\sum_{P\in R_q(\F_q)}P-nP_\infty$.
A central lift of the standard involution interchanging
$P_\infty$ and $P_0$ gives a principal divisor
\[
 \frac nD(P_\infty-P_0).
\]
The class $[P_0-P_\infty]$ has exact order $n/m_-$.
Indeed, the function $w_8$ in \cite[Lemma~4.2]{Skabelund} has divisor
$(n/m_-)(P_0-P_\infty)$.  The least positive nongap at $P_\infty$ is
$q^2$: the coordinate $v$ has that pole order, and the $q^3+1$
rational points give the reverse inequality by the degree bound
$\#R_q(\F_q)\le1+q\deg(f)$ for a one-pole function defined over
$\F_q$.  Riemann--Roch base change makes the same bound valid for the
geometric nongaps.  Since
\[
 \frac n{m_-}=(q+1)(q+3q_0+1)<2q^2,
\]
a proper divisor of $n/m_-$ cannot be a positive nongap.
Thus $n/m_-\mid n/D$, proving $D\mid m_-$.
\end{proof}

\section{Full automorphism groups and their large subgroups}\label{sec:fullgroups}

We now write $d$ for the Kummer degree of the model and $r$ for
the order of the central kernel of $G$. Thus $r\mid d$ and
$t=d/r$.

\begin{lemma}[One-pole spaces on the Kummer models]\label{lem:semigroup}
Let $X\to Y$ be $t^d=f$, where $(d,p)=1$, $f$ is regular away from
$P_\infty$, its finite zeros are simple, its only pole has order $N$,
and $(d,N)=1$.  Let $Q_\infty$ be the point above $P_\infty$.  Then
\[
 H_X(Q_\infty)=
 \bigcup_{j=0}^{d-1}\bigl(d H_Y(P_\infty)+jN\bigr).
\]
\end{lemma}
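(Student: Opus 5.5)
The plan is to prove the two inclusions separately, using valuations of Kummer monomials. Note first that $Q_\infty$ is totally ramified over $P_\infty$ with index $d$, since $v_{P_\infty}(f)=-N$ and $(d,N)=1$. Write $t$ for the Kummer generator, so $v_{Q_\infty}(t)=-N$. For $\phi\in K(Y)$ we have $v_{Q_\infty}(\phi)=d\,v_{P_\infty}(\phi)$.

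For the inclusion $\supseteq$, I take a function $\phi\in K(Y)$ regular away from $P_\infty$ with pole order $a\in H_Y(P_\infty)$, and consider $\phi t^j$ with $0\le j\le d-1$. The element $t$ is regular away from $Q_\infty$, because $f$ is regular on the affine part and $t^d=f$. Hence $\phi t^j$ has its only pole at $Q_\infty$, of order $da+jN$.

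For the inclusion $\subseteq$, I use the Kummer decomposition. Since $K(X)/K(Y)$ is Galois cyclic of degree $d$ generated by $t$, every $F\in K(X)$ can be written uniquely as $F=\sum_{j=0}^{d-1}\phi_j t^j$ with $\phi_j\in K(Y)$. The components $\phi_j t^j$ are the eigencomponents under the generator $t\mapsto\zeta t$, obtained by the averaging projectors $d^{-1}\sum_k\zeta^{-jk}\sigma^k$; this step uses $(d,p)=1$. These projectors preserve regularity away from $Q_\infty$, so if $F\in L(nQ_\infty)$ then each $\phi_j t^j$ is regular away from $Q_\infty$ with pole order at most $n$.

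The key step, and the one I expect to be the main obstacle, is showing that $\phi_j$ is regular on the affine part of $Y$. At a finite point $B$ with $v_B(f)=1$, the point $B$ is totally ramified, so $v(\phi_j t^j)=d\,v_B(\phi_j)+j\ge0$; since $j<d$, this forces $v_B(\phi_j)\ge0$. At a finite point where $f$ is a unit, the extension is unramified, $t$ is a unit, and regularity follows directly. So $\phi_j\in L(a_jP_\infty)$ for some $a_j\ge0$.

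Finally, the valuations $v_{Q_\infty}(\phi_j t^j)=-(da_j+jN)$ are pairwise distinct modulo $d$, because $(N,d)=1$. Therefore no cancellation can occur, and the pole order of $F$ equals the maximum of the $da_j+jN$. If $F$ has exact pole order $n$, this maximum is attained by a single $j$, and $a_j$ is the exact pole order of $\phi_j$, so $a_j\in H_Y(P_\infty)$. This gives $n\in dH_Y(P_\infty)+jN$ and completes the reverse inclusion.
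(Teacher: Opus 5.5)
Your proof is correct and follows the same route as the paper: write $F=\sum_{j=0}^{d-1}\phi_jt^j$ with each $\phi_j$ regular away from $P_\infty$, then note that the pole orders $da_j+jN$ are pairwise distinct modulo $d$, so no cancellation occurs. The paper gets regularity of the coefficients by asserting that $A(Y)[t]/(t^d-f)$ is normal (étale away from the zeros of $f$, and $t$ a local parameter at the simple zeros). You instead derive it directly from the Galois eigenprojectors and a valuation count at the totally ramified and unramified finite points, which is a slightly more self-contained justification of the same fact.
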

\begin{proof}
The affine algebra $A(Y)[t]/(t^d-f)$ is normal: the cover is etale
away from the zeros of $f$, and at a simple zero $t$ is a regular
parameter.  It is the coordinate ring of $X\setminus\{Q_\infty\}$,
free over $A(Y)$ with basis $1,t,\ldots,t^{d-1}$.
The pole orders of $h_jt^j$ are $d\operatorname{pole}(h_j)+jN$.
For distinct $j$ they are distinct modulo $d$, so cancellation of
largest poles is impossible.  This proves both inclusions.
\end{proof}

Before determining the full groups, note that every displayed model
has zero $p$-rank, independently of the classification argument.
For $L(q,d)$ the translations of $y$ give a group of order $q$
with quotient $\PP^1_x$, ramified only and totally at infinity.
For $U(q,d)$ and $R(q,d)$ use the standard Sylow covers of their
base curves, followed by the tame base change $u=x^d$.
The base change is linearly disjoint from the Sylow cover because
their degrees are coprime. It preserves the full wild inertia at
infinity and creates no finite ramification in the Sylow cover.
Thus these Sylow groups have orders $q^3$, rational quotient, and
exactly one short orbit, a fixed point.
Deuring--Shafarevich gives $\gamma=0$ in all three cases.
The natural lift groups used below are given explicitly in
\eqref{eq:lineargroup} and \eqref{eq:unitarygroup}, and by
\cite[Lemma~4.2]{Skabelund} in the Ree case; their construction does
not assume that they are the full automorphism groups.

\begin{proposition}\label{prop:fullorders}
For each displayed model of genus at least two, its full geometric
automorphism group $A_X$ is the full
lift of the indicated group of its canonical quotient:
\begin{center}
\begin{tabular}{@{}lll@{}}
\toprule
Model & Central quotient & $|A_X|$\\
\midrule
$L(q,d):\ y^q-y=x^d$ & $\PGL(2,q)$ & $d q(q^2-1)$\\
$U(q,d)$ in \eqref{eq:Umodel} & $\PGU(3,q)$ &
$d q^3(q^3+1)(q^2-1)$\\
$R(q,d)$ in \eqref{eq:Rmodel} & $\Ree(q)$ &
$d q^3(q^3+1)(q-1)$\\
\bottomrule
\end{tabular}
\end{center}
The central kernel in each full group is exactly $C_d$.
For the Ree model the extension is $C_d\times\Ree(q)$.
\end{proposition}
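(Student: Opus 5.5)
The plan is to show that each model carries a unique largest $p$-group orbit structure, that the full group $A_X$ must preserve it, and then to pin down $A_X$ with the central-kernel machinery of Theorem~\ref{thm:kernel}.

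\emph{Lower bound.} The explicit lifts already give $|A_X|\ge$ the displayed orders. For $L(q,d)$ these are the transformations in \eqref{eq:lineargroup}. For $U(q,d)$ they are the lifts in \eqref{eq:unitarygroup}. For $R(q,d)$ we use \cite[Lemma~4.2]{Skabelund} together with the Kummer automorphisms $x\mapsto\xi x$, $\xi^d=1$. The groups needed here are the central lifts of $\PGL(2,q)$, $\PGU(3,q)$ and $\Ree(q)$.

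\emph{Upper bound.} Since $\gamma(X)=0$ was verified above, Lemma~\ref{lem:sylow} applies to $A_X$. Let $S_0$ be the visible Sylow group, of order $q$ or $q^3$, and let $S^*\ge S_0$ be a Sylow $p$-subgroup of $A_X$. The group $S^*$ fixes only $P_\infty$, so $S^*$ lies in the stabilizer $(A_X)_{P_\infty}$.

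\textbf{Step 1: bound the stabilizer at $P_\infty$.}
\begin{enumerate}[label=\textup{(\alph*)},leftmargin=*]
\item Compute $H(P_\infty)$ via Lemma~\ref{lem:semigroup}. Take $Y$ to be $\PP^1$, $H_q$ or the Ree curve, respectively. Use $f=y^q-y$, $y^{q^2}-y$ or $v^q-v$, whose finite zeros are simple and whose pole orders are $N=q$, $q^3+1$ and $q^3+1$ in turn.
\item Read off the first nongaps: $d$, resp.\ $dq$ and $dq^2$ type values. The Riemann--Roch spaces of the first few nongaps give coordinate functions ($x$, $y$, $z$, $v$) that $(A_X)_{P_\infty}$ must act on triangularly. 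This reduces the stabilizer to affine-triangular substitutions.
\item Compare with the defining equations. This should show that the stabilizer is exactly the visible group $S_0\rtimes C_{d(\text{torus})}$, so $S^*=S_0$ and $|(A_X)_{P_\infty}|$ equals the visible stabilizer order.
\end{enumerate}
Step~1 is the main obstacle, particularly in the Ree case. There the semigroup is complicated, and I would lean on the known computation of $\Aut$ of the Ree curve and of Skabelund's cover \cite{GMQZ} for the cyclic quotients, using the same triangular-substitution argument.

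\textbf{Step 2: bound the orbit of $P_\infty$.} With $S^*=S_0$, the Sylow fixed points form the $A_X$-orbit $\Omega^*\supseteq\Omega$. Here $\Omega$ is the visible orbit, of size $q+1$ or $q^3+1$. I would show $\Omega^*=\Omega$ as follows.
\begin{enumerate}[label=\textup{(\alph*)},leftmargin=*]
\item Points of $\Omega$ are exactly the points at which the Weierstrass semigroup equals $H(P_\infty)$.
\item Alternatively, $|\Omega^*|\equiv1\pmod{|S_0|}$, and the genus bounds $|A_X|\le 16g^4$-type estimates force $|\Omega^*|$ to be $|S_0|+1$.
\end{enumerate}
Then $|A_X|=|\Omega|\cdot|(A_X)_{P_\infty}|$ gives the displayed orders, so $A_X$ equals the lift group.

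\textbf{Step 3: central kernel and splitting.}
\begin{enumerate}[label=\textup{(\alph*)},leftmargin=*]
\item The argument of Theorem~\ref{thm:kernel}, applied to $A_X$, shows that the kernel on $\Omega$ is central and cyclic. This argument uses only the doubly transitive action and the cyclic tame normalizer quotient.
\item The kernel contains $C_d$ and fixes $\Omega$ pointwise.
\item It equals $C_d$, because the quotient by $C_d$ already acts faithfully on $\Omega$ as $\PGL(2,q)$, $\PGU(3,q)$ or $\Ree(q)$.
\item For the Ree model, the splitting $C_d\times\Ree(q)$ follows from $\Ree(q)$ being perfect with trivial Schur multiplier for $q\ge27$. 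The commutator subgroup of the central extension then maps isomorphically onto $\Ree(q)$.
\end{enumerate}
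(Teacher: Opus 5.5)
There is a genuine gap in Step~2, and that step carries the real content of the proposition. Nothing in your proposal shows that the $A_X$-orbit $\Omega^*$ of Sylow fixed points equals the visible set $\Omega$.

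\begin{itemize}
\item Step~2(a) is an unproved assertion: you give no reason why a point outside $\Omega$ cannot have Weierstrass semigroup $H(P_\infty)$.
\item Step~2(b) fails numerically. The congruence $|\Omega^*|\equiv1\pmod{|S|}$ leaves $|\Omega^*|=2|S|+1,3|S|+1,\dots$ open, and bounds of $16g^4$ type are far too weak to exclude these values. (Moreover, the Hermitian curve is exactly the exception to Stichtenoth's $16g^4$ bound.) On $L(q,2)$ one has $16g^4=(q-1)^4$, which for $q\ge7$ exceeds $2q(q-1)(2q+1)$, the order an orbit of size $2q+1$ would give. On $U(q,d)$ with $d>1$, $16g^4$ grows like $q^{12}$, while the lift group has order about $dq^8$.
\end{itemize}

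The missing idea is the one the paper uses. Once the Sylow order of $A_X$ is known, apply Theorem~\ref{thm:GMPprelim} to $A_X$ itself. Its Sylow group is noncyclic and TI by zero $p$-rank. It is not normal, because the explicit lift group moves $P_\infty$. Its normalizer is $S\rtimes C$ with $C$ cyclic. This yields a doubly transitive action of degree $|S|+1$, so $|\Omega^*|=|\Omega|$ and hence $\Omega^*=\Omega$. For $L(p,d)$ the Sylow group is cyclic, so the theorem does not apply; there you need the Stichtenoth--Lehr--Matignon one-point result instead. Without this input, your Step~3(a), which presupposes a doubly transitive action of $A_X$, is also unsupported.

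Step~1(c) is likewise only a hope (``this should show''). It can, however, be completed without triangular substitutions or an appeal to \cite{GMQZ}:
\begin{itemize}
\item An automorphism fixing $Q_\infty$ preserves $\langle1,y\rangle$ (resp.\ $\langle1,v\rangle$), so it acts affinely on that coordinate.
\item It must preserve the finite branch locus $\F_q$, $\F_{q^2}$, $\F_q$ of the projection to that line.
\item The kernel has order at most the degree $d$, $dq$, $dq^2$ of the function-field extension.
\end{itemize}
This bounds the stabilizer by $dq(q-1)$, $dq^3(q^2-1)$, $dq^3(q-1)$, which are exactly the visible orders. The paper uses only the $p$-part of this bound.

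Note also that $y^{q^2}-y$ on $H_q$ and $v^q-v$ on the Ree curve have pole order $q^3$, not $q^3+1$. With $N=q^3+1$ you would have $(d,N)=d$, and Lemma~\ref{lem:semigroup} would not apply.

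Once the theorem of Guralnick, Malmskog and Pries supplies $|\Omega^*|=|S|+1$, your order count closes the argument. This is a slightly different route from the paper's. The paper instead makes $C_d$ central, descends $A_X$ to $X/C_d$, and invokes the known full groups of $\PP^1$ preserving $\PP^1(\F_q)$, of $H_q$, and of the Ree curve. Your Schur-multiplier argument for the splitting $C_d\times\Ree(q)$ is a valid substitute for the citation of \cite{GMQZ}.
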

\begin{proof}
First suppose $d>1$.  Lemma~\ref{lem:semigroup} gives the least
positive pole order at the distinguished infinite point and its
Riemann--Roch space as follows:
\[
\begin{array}{c|c|c}
 &\text{least pole order}&\text{space}\\ \hline
 L(q,d)&d&\langle1,y\rangle\\
 U(q,d)&dq&\langle1,y\rangle\\
 R(q,d)&dq^2&\langle1,v\rangle.
\end{array}
\]
In the last two cases the first possible term involving the Kummer
variable has pole order $q^3$, strictly larger than the indicated
minimum.  For the linear model the corresponding pole order is $q$.

A Sylow $p$-subgroup of $A_X$ containing the natural Sylow group fixes
the infinite point and preserves this two-dimensional space.
Its induced action on the displayed coordinate is by translations.
It preserves the finite branch locus of the map to that coordinate
line, namely $\F_q$, $\F_{q^2}$, and $\F_q$, respectively.
Thus its translation image has order at most $q,q^2,q$.
The kernel has $p$-part at most $1,q,q^2$, respectively, by the
corresponding function-field degrees $d,dq,dq^2$.
Consequently the natural Sylow group is already a Sylow subgroup of
$A_X$, of order $q,q^3,q^3$.

In the noncyclic cases apply \cite[Theorem~3.16]{GMP} to $A_X$.
The already constructed natural lift group moves the Sylow fixed
point, so that the Sylow subgroup is not normal. Its normalizer has
cyclic tame quotient and it is a TI subgroup by zero $p$-rank.
The resulting natural doubly transitive action has degree $|S|+1$.
Its wild orbit therefore has the same size as the natural distinguished
set and contains that set, so they are equal. The centrality argument
in the first part of Theorem~\ref{thm:kernel} uses only this action
and the cyclic complement, not the size inequality. It makes $C_d$
normal (indeed central), and $A_X$ descends to the canonical quotient.  Its kernel is exactly the Kummer deck group.
The quotient group is contained in $\PGL(2,q)$ for the line, in
$\PGU(3,q)$ for the Hermitian curve, and in $\Ree(q)$ for the Ree
curve.  The latter two are the known full geometric groups of the
base curves; on the line preservation of
$\PP^1(\F_q)$ gives precisely $\PGL(2,q)$.

Every element of the indicated quotient lifts.  For the linear and
unitary cases explicit lift groups are given immediately below.
For the Ree case, \cite[Lemma~4.2]{Skabelund} lifts the full group to
the $m_-$-cover; quotienting its central deck group gives the lift to
every $d\mid m_-$.  The full lift on the $m_-$-cover is
$C_{m_-}\times\Ree(q)$ by \cite{GMQZ}, and its corresponding quotient
is $C_d\times\Ree(q)$.
This proves equality and the orders.
For $d=1$ in the unitary and Ree cases the models are the base curves,
whose full groups are already known.  For a prime $q$ in the linear
case the full-group assertion is also the Stichtenoth--Lehr--Matignon
result used in Section~\ref{sec:cyclic}.
\end{proof}

\subsection{The linear central extension}
Set $a=(q+1)/d$.  The full lift group is
\begin{equation}\label{eq:lineargroup}
 A_L(q,d)=
 \frac{\{(B,\kappa):B\in\operatorname{GL}_2(\F_q),\ 
                         \kappa^d=\det B\}}
      {\{(cI,c^a):c\in\F_q^\times\}}.
\end{equation}
For $B=\left(\begin{smallmatrix}A&B_0\\C&E\end{smallmatrix}\right)$
its action is
\[
 y\longmapsto\frac{Ay+B_0}{Cy+E},\qquad
 x\longmapsto\frac{\kappa x}{(Cy+E)^a}.
\]
The identity
$\bigl((Ay+B_0)/(Cy+E)\bigr)^q-\bigl((Ay+B_0)/(Cy+E)\bigr)
=\det(B)(y^q-y)/(Cy+E)^{q+1}$ verifies the action and the scalar
kernel in \eqref{eq:lineargroup}.
Let $L$ be the image of $\operatorname{SL}_2(q)$, and put
\begin{equation}\label{eq:epsilonL}
 \epsilon_L=\frac{2}{(2,a)}.
\end{equation}
Then
\[
 A_L'=L,\quad |L\cap C_d|=\epsilon_L,\quad
 L\cong\operatorname{SL}_2(q)/C_{2/\epsilon_L},\quad
 A_L/L\cong C_{2d/\epsilon_L}.
\]
Indeed the determinant pairs form a cyclic group of order $d(q-1)$;
modding out the scalar image gives the displayed cyclic quotient.
The group $\operatorname{SL}_2(q)$ is perfect for the values $q\ge5$
occurring here, which proves the derived-group assertion.

\subsection{The unitary central extension}
Put $A=q^2-q+1$, $a=A/d$, and $\mu=(3,q+1)$.
Let $\operatorname U_3(q)$ denote the unitary isometry group of
$Z^qW+ZW^q-Y^{q+1}$.  Then
\begin{equation}\label{eq:unitarygroup}
 A_U(q,d)=
 \frac{\{(B,\kappa):B\in\operatorname U_3(q),\ 
                         \kappa^d=\det B\}}
      {\{(cI,c^a):c^{q+1}=1\}}.
\end{equation}
If $\ell$ is the projective denominator of $B$ on the affine Hermitian
curve, the lift sends $x$ to $\kappa x/\ell^a$.
The identity to be checked is
\[
 (y^{q^2}-y)\circ B
       =\det(B)\,(y^{q^2}-y)/\ell^{q^2-q+1}.
\]
It holds for the Heisenberg translations, for the diagonal unitary
matrices, and for the interchange of $Z,W$; these generate the
unitary group. For the diagonal matrices one can use
$\operatorname{diag}(a^q,1,a^{-1})$, with
$a\in\F_{q^2}^{\times}$ and coordinates ordered as $(Z,Y,W)$:
the affine action is $(z,y)\mapsto(a^{q+1}z,ay)$,
the determinant is $a^{q-1}$, and $\ell=a^{-1}$.
The interchange gives the factor
$-z^{-(q^2-q+1)}$, using $z^q+z=y^{q+1}$.
The scalar denominator in \eqref{eq:unitarygroup} is valid because
$a d=q^2-q+1\equiv3\pmod{q+1}$.

For $L$ the image of $\operatorname{SU}_3(q)$, put
\begin{equation}\label{eq:epsilonU}
 \epsilon_U=\frac{\mu}{(\mu,A/d)}.
\end{equation}
Then
\[
 A_U'=L,\quad |L\cap C_d|=\epsilon_U,\quad
 L\cong\operatorname{SU}_3(q)/C_{\mu/\epsilon_U},\quad
 A_U/L\cong C_{\mu d/\epsilon_U}.
\]
This follows just as in the linear case from the determinant pairs and
the scalar quotient, using the perfectness of $\operatorname{SU}_3(q)$.

\begin{theorem}[All large subgroups in the three families]\label{thm:subgroups}
Apart from the exceptional $A_7\le\Aut(H_5)$, every group in the
hypotheses with $g\ge4$ is one of the following groups on its indicated
model.  In every row $r\mid d$, $t=d/r$, and the strict
inequality $|G|>24g(g-1)$ is imposed.
\begin{enumerate}[label=\textup{(\roman*)},leftmargin=*]
\item On $L(q,d)$, with $L$ and $\epsilon_L$ as above:
\[
\begin{array}{c|c|c}
 G/C_r& G&\text{existence condition}\\ \hline
 \PSL(2,q)&LC_r&\epsilon_L\mid r\\
 \PGL(2,q)&\text{the unique index-$t$ subgroup of }A_L&t\text{ odd}.
\end{array}
\]
The orders are $rq(q^2-1)/2$ and $rq(q^2-1)$.
\item On $U(q,d)$, with $L$ and $\epsilon_U$ as above:
\[
\begin{array}{c|c|c}
 G/C_r& G&\text{existence condition}\\ \hline
 \PSU(3,q)&LC_r&\epsilon_U\mid r\\
 \PGU(3,q)&\text{the unique index-$t$ subgroup of }A_U&(t,\mu)=1.
\end{array}
\]
The orders are $rq^3(q^3+1)(q^2-1)/\mu$ and
$rq^3(q^3+1)(q^2-1)$.  When $\mu=1$ the rows coincide.
\item On $R(q,d)$, the group is exactly $C_r\times\Ree(q)$, of order
$rq^3(q^3+1)(q-1)$.
\end{enumerate}
All the listed groups occur on the indicated models. They act without
a common fixed point, and the models have zero $p$-rank.
\end{theorem}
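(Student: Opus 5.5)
The plan is to deduce the list from the structural reduction (Theorem~\ref{thm:main}), the equations of Section~\ref{sec:equations}, and the full groups of Proposition~\ref{prop:fullorders}. This gives two directions: identify every large $G$ inside the appropriate full group, and check that every listed group occurs.

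For the first direction, start with $G$ satisfying the hypotheses and excluded from case~\ref{case:A7}. Section~\ref{sec:equations} places $\cX$ on one of $L(q,d)$, $U(q,d)$, $R(q,d)$. The linear case comes from case~\ref{case:AS} with $q=p$, or from Proposition~\ref{prop:linearcomplete}; the other two come from Propositions~\ref{prop:unitarymodel} and~\ref{prop:reemodel}. In each case the Kummer degree is $d=D=rt$. Proposition~\ref{prop:fullorders} puts $G$ inside $A_X$, whose central kernel is $C_d$. Since $G\cap C_d$ fixes $\Omega$ pointwise, it is the kernel $M=C_r$ of Theorem~\ref{thm:kernel}. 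Hence $r\mid d$, and $G/C_r$ embeds in the central quotient $\PGL(2,q)$, $\PGU(3,q)$ or $\Ree(q)$. By Theorem~\ref{thm:main} and Proposition~\ref{prop:outer}, this image is $\PSL(2,q)$ or $\PGL(2,q)$, $\PSU(3,q)$ or $\PGU(3,q)$, or $\Ree(q)$ respectively.

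Next determine $G$ from $G/C_r$ and $r$.
\begin{itemize}
\item When the image contains the socle, $G$ contains the derived group of the preimage, which is $L=A_X'$ because $\operatorname{SL}_2(q)$ and $\operatorname{SU}_3(q)$ are perfect. So $G\supseteq L$ and $G\cap C_d=C_r$.
\item In the $\PSL$ or $\PSU$ row, $G$ is contained in the preimage $LC_d$ of the socle. Counting orders, $|G|=r|H|$ with $H$ the socle and $|L|=\epsilon|H|$, where $L\cap C_d=C_\epsilon$. The subgroup $LC_r$ has order $r|H|$ only when $C_\epsilon\le C_r$, which gives $G=LC_r$ under $\epsilon_L\mid r$ (resp.\ $\epsilon_U\mid r$).
\item In the $\PGL$ or $\PGU$ row, $G$ contains $L$ and has index $t=d/r$ in $A_X$. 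Since $A_X/L$ is cyclic, $G$ is the unique index-$t$ subgroup.
\end{itemize}
Such a $G$ surjects onto the projective quotient exactly when its image in $A_X/L\cong C_{2d/\epsilon}$ (resp.\ $C_{\mu d/\epsilon}$) still maps onto $\PGL/\PSL\cong C_2$ (resp.\ $C_\mu$). The main obstacle is this bookkeeping: computing the composite $C_{2d/\epsilon_L}\to C_2$ via the determinant pairs, and showing that the index-$t$ subgroup surjects iff $t$ is odd (resp.\ $(t,\mu)=1$). I would first check that $C_d$ maps onto $A_X/L$ with image of index $2/\epsilon$ (resp.\ $\mu/\epsilon$) and then track the projection explicitly. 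The same computation shows the kernel of the index-$t$ subgroup is $C_r$.

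In the Ree case, $A_X=C_d\times\Ree(q)$. Here $\Ree(q)$ is perfect for $q\ge27$, so $G\supseteq\Ree(q)$, and hence $G=C_r\times\Ree(q)$.

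For the converse direction, each listed group acts on the model because it lies in the explicit lift groups \eqref{eq:lineargroup} and \eqref{eq:unitarygroup}, or in $C_d\times\Ree(q)$. It has no common fixed point because its image is transitive on the natural distinguished set $\Omega$, so it moves every point. Its order is as stated by the computations above. Zero $p$-rank was established before Proposition~\ref{prop:fullorders}. Finally, the inequality $|G|>24g(g-1)$ is imposed in each row by hypothesis, as the statement says.
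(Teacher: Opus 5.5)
Your proposal is correct and follows essentially the same route as the paper's proof. Centrality of $C_d$ gives $(GC_d)'=G'$, so $G$ contains the perfect group $L$; this yields $G=LC_r$ with $\epsilon\mid r$ in the special rows, the unique index-$t$ subgroup of the cyclic group $A_X/L$ in the full rows, $C_r\times\Ree(q)$ by perfectness in the Ree case, and the converse from the explicit lift groups.

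One small correction to the bookkeeping you flagged. The image of $C_d$ in $A_X/L$ has index exactly $2$ (resp.\ $\mu$), not $2/\epsilon$, because it is the kernel of $A_X/L\to C_2$ (resp.\ $C_\mu$). You do not need this fact, however: an index-$t$ subgroup of a cyclic group mapping onto $C_2$ (resp.\ $C_\mu$) surjects precisely when $(t,2)=1$ (resp.\ $(t,\mu)=1$). Then $|G\cap C_d|=|G|/|\PGL(2,q)|=d/t=r$ (resp.\ with $\PGU(3,q)$) holds automatically.
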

\begin{proof}
The preceding geometric propositions show that the original Sylow
order equals the natural Sylow order of the model, except for the
$A_7$ case already isolated.  Thus the structural quotient of $G$ is
the indicated special or full projective group, with central kernel
$C_r\le C_d$.
For either projective case, $G C_d$ is the full preimage of that
projective group.  Since $C_d$ is central,
$(G C_d)'=G'$, so $G$ contains the perfect derived group $L$.
In the special row it is therefore $LC_r$, which exists with exactly
this kernel precisely when $\epsilon\mid r$.
In the full projective row it is the unique index-$t$ subgroup in the
cyclic abelianization.  It still surjects onto the outer diagonal
quotient $C_2$ or $C_\mu$ precisely when $(t,2)=1$ or $(t,\mu)=1$.
This also guarantees that its central intersection has order $r$.
For the Ree direct product, perfectness gives the same argument with
$L=\Ree(q)$.

Conversely, all the listed subgroups are defined by the explicit lift
groups.  Their Sylow groups act on the models with one fixed point,
freely elsewhere, and with rational quotient.  Deuring--Shafarevich
therefore gives $p$-rank zero.  The indicated projective images move
the unique Sylow fixed point, so a global fixed point is impossible.
The required order bound is then imposed on each group.
\end{proof}

\subsection{Proof of the main theorem}

\begin{proof}[Proof of Theorem~\ref{thm:geometric}]
Theorem~\ref{thm:main} gives the structural alternatives.
In the prime cyclic case, its Artin--Schreier family is $L(p,d)$,
while Proposition~\ref{prop:A7rigidity} identifies the exceptional
curve with $H_5=U(5,1)$.
Propositions~\ref{prop:linearcomplete}, \ref{prop:unitarymodel}, and
\ref{prop:reemodel} give all the noncyclic models and their exact
Kummer divisibilities.  Proposition~\ref{prop:fullorders} identifies
the full groups, and Theorem~\ref{thm:subgroups} gives the exact
subgroups and realization conditions.

For fixed parameters there is a unique $K$-isomorphism class because
the equations have no residual coefficients: Lemma~\ref{lem:ASrigid} and Lemma~\ref{lem:twists} remove all lower-degree terms.  The parameter $q$ is determined by the
full Sylow group, whose types are elementary abelian in the linear
case, nonabelian of exponent $p$ in the unitary case, and of exponent
$9$ in the Ree case.  Within a type the full Sylow order determines
$q$ and the genus determines $d$.  The three geometric families are
therefore disjoint.  The exceptional $A_7$ is an additional subgroup
on a curve already in the unitary family, not an additional curve.
\end{proof}

\section{Curves of genus two and three}\label{sec:smallgenera}

For $g=2,3$, the bound $24g(g-1)$ does not exceed $84(g-1)$.
The tame actions must therefore be considered separately.  The letter $A_L(q,d)$ retains the explicit meaning
in \eqref{eq:lineargroup}.

\begin{theorem}\label{thm:small}
Let $p>2$, $\gamma(\cX)=0$, $g\in\{2,3\}$, and let $G\le\Aut(\cX)$
fix no point and satisfy $|G|>24g(g-1)$.  The complete list is:
\begin{enumerate}[label=\textup{(\roman*)},leftmargin=*]
\item $g=2$, $p=5$, $\cX:y^2=x^5-x$;
$G\cong\operatorname{SL}_2(5)$ of order $120$, or
$G=A_L(5,2)$ of order $240$.
\item $g=3$, $p=7$, $\cX:y^2=x^7-x$;
$G\cong\PSL(2,7)$, $C_2\times\PSL(2,7)$, or $A_L(7,2)$,
of orders $168,336,672$.
\item $g=3$, $p=3$, $\cX=H_3$;
$G\cong\PSL(2,7)$ or $G=\PGU(3,3)$,
of orders $168,6048$.
\item $g=3$, $p>3$, $p\ne7$, and $p\equiv3,5,6\pmod7$;
$\cX$ is the Klein quartic
\[
 X^3Y+Y^3Z+Z^3X=0,
\]
and $G=\Aut(\cX)\cong\PSL(2,7)$, of order $168$.
\end{enumerate}
In characteristic $3$ the Klein quartic is isomorphic over $K$ to
$H_3$, so it is not a second curve in \textup{(iii)}.
\end{theorem}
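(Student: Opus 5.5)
The proof splits according to whether $p$ divides $|G|$. Throughout, $48$ and $144$ are the thresholds $24g(g-1)$ for $g=2,3$, and $84(g-1)=84,168$ are the Hurwitz bounds.

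\emph{Tame case, $p\nmid|G|$.} By the tame lifting theorem \cite{SGA1} and uniqueness of the stable model, $(\cX,G)$ lifts to characteristic zero. There $|G|\le84(g-1)$, so:
\begin{itemize}
\item for $g=2$ we would need $48<|G|\le84$, which is excluded by the known characteristic-zero list (maximal order $48$);
\item for $g=3$ we would need $144<|G|\le168$, which forces $|G|=168$ and the Klein quartic with $\PSL(2,7)$.
\end{itemize}
The group acts on the Klein quartic without a common fixed point automatically. It remains to decide when the reduction has $p$-rank zero. We apply the plane-quartic Hasse--Witt formula of Section~\ref{sec:prelim} to $F=X^3Y+Y^3Z+Z^3X$. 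The relevant coefficients of $F^{p-1}$ are sums over monomials $X^aY^bZ^c$ with exponent vectors solving a small linear system. We expect them to vanish identically for $p\equiv3,5,6\pmod7$; in that case the Frobenius matrix is a weighted permutation matrix of nilpotent shape, giving $\gamma=0$. For $p\equiv1,2,4\pmod7$ the Klein quartic is ordinary, since it reduces to a curve with Jacobian isogenous to $E^3$, where $E$ has CM by $\mathbb Q(\sqrt{-7})$ and $p$ splits. Stated via Hasse--Witt: the diagonal-type entries are nonzero. This gives (iv); the primes $p=3,7$ are divisors of $168$ and are handled in the wild case.

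\emph{Wild case, $p\mid|G|$.} We rerun the argument of Sections~\ref{sec:two}--\ref{sec:cyclic} with the weaker size hypothesis. Theorem~\ref{thm:two} requires $|G|>\max\{84(g-1),8(g-1)^2\}$, which reads $>84$ for $g=2$ and $>168$ for $g=3$. So we treat two ranges separately.
\begin{itemize}
\item \emph{Above the Hurwitz bound.} Here Theorem~\ref{thm:two} applies and gives two short orbits. Since $p^2\le|S|\le$ a small function of $g$ for noncyclic $S$, only tiny $q$ are possible, e.g.\ $q=9$ with $g\ge(q-1)/2$ too large. We expect the noncyclic branch to contribute only $\PGU(3,3)$ on $H_3$ ($g=3$, $q=3$), where the unitary formula \eqref{eq:unitary} with $rt=1$ gives $g=3$. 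For cyclic $S=C_p$, Lemma~\ref{lem:positiveh} and the rational-quotient argument, with the size bound replaced by $|G|>84(g-1)$, leave $y^p-y=x^d$ with $g=(p-1)(d-1)/2\in\{2,3\}$. Since $d\mid p+1$, $d<p$, this gives $(p,d)=(5,2),(7,2),(3,\cdot)$ impossible, and so on. The curve $y^5-y=x^2$ is $y^2=x^5-x$ after renaming, and similarly for $p=7$. The subgroups above $48$ resp.\ $144$ are then read off from Theorem~\ref{thm:subgroups}(i): $LC_r$ and index-$t$ subgroups of $A_L(p,2)$. This yields $\operatorname{SL}_2(5)$ and $A_L(5,2)$, and $\PSL(2,7)$ ($\epsilon_L=1$ since $a=4$), $C_2\times\PSL(2,7)$, and $A_L(7,2)$.
\item \emph{Window between $24g(g-1)$ and the Hurwitz-type threshold.} For $g=2$ this is $48<|G|\le84$; for $g=3$ it is $144<|G|\le168$. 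We would enumerate the possibilities using Lemma~\ref{lem:sylow}, i.e.\ $|G|\ge s(s+1)$, together with the Hurwitz formula for non-large groups (signatures with $\cX/G$ rational). In genus $3$ only order $168$ survives. With $p\in\{3,7\}$ this is $\PSL(2,7)$ on $y^2=x^7-x$ or on $H_3$, the latter identified via \cite{Elkies}.
\end{itemize}

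\emph{Main obstacle.} We expect the hardest step to be this last window. Without two-orbit control one must exclude wild actions with several short orbits by case analysis of the genus formula \eqref{eq:RHprelim} together with Lemma~\ref{lem:local}. For that analysis we would first bound $s\le 2g+1$ using \eqref{eq:hurwitzS}, then check each small $p$ directly. The Hasse--Witt computation is routine but must be done carefully, using the stable rank rather than a single matrix.
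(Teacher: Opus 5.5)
\textbf{The wild range below the Hurwitz bound is never actually analysed.} You split the wild case at $84(g-1)$. That places both wild order-$168$ actions of $\PSL(2,7)$, in characteristics $3$ and $7$, into your ``window''. It also places there the exclusion of wild groups of order $49$ to $84$ in genus $2$. In the window you only announce an enumeration and then state its outcome: that only $|G|=168$ survives, that $G\cong\PSL(2,7)$, and that the curve is $y^2=x^7-x$ or $H_3$. None of this is derived.

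The characteristic-$3$ identification cannot come from Elkies' result. That result says the reduction of the Klein quartic mod $3$ is $H_3$. But $3\mid168$, so the action is wild and the tame lifting argument you used to reach the Klein quartic is unavailable. Nothing in the proposal shows that a genus-$3$ curve in characteristic $3$ with such an action is that reduction. The paper supplies this rigidity step:
\begin{itemize}
\item For $U=7{:}3$ of index $8$, Riemann--Hurwitz reads $4=21(2g(\cX/U)-2)+28+18$, so $\cX/U$ is rational.
\item The degree-$8$ map $\cX/U\to\cX/G$ has fibre multiplicities $6,2$ over the wild branch point and $1,7$ over the tame one, so it is $(z-1)^6(z-a)^2/z$.
\item Absence of further branching in characteristic $3$ forces $a=-1$. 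The core-free Galois closure is then unique and is realized by the reduced Klein quartic.
\end{itemize}
Likewise $G\cong\PSL(2,7)$ needs an argument. The paper's is: a tame abelian quotient would be branched at two points with degree dividing both $2$ and $7$, so $G$ is perfect; and $\PSL(2,7)$ is the only nonabelian simple group of order dividing $168$.

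\textbf{How the paper avoids the window, and further omissions.} The paper never needs a window. All of genus $2$ and hyperelliptic genus $3$ goes through the central hyperelliptic involution. Zero $p$-rank puts the fixed point of every $p$-element of $\bar G\le\PGL_2(K)$ among the $2g+2$ branch points. Dickson's classification then leaves $\PSL(2,q)$ or $\PGL(2,q)$ with branch set $\PP^1(\F_q)$, so $q=5$ or $7$, with no split at the Hurwitz bound.

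For nonhyperelliptic wild genus $3$, the two-orbit configuration is re-proved from $2g-2=4$ alone. A positive-genus quotient would need $(s+1)(2s-2)\le4$, and every other configuration gives $|G|<32$. Then $S=C_3$, $h=1$, $j=1$, $m\mid2$ yields $|G|=12me/(e-3m)$, which exceeds $144$ only for $m=2$, $e=7$.

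Your own cyclic enumeration misses exactly these $p=3$ branches. Besides $h=1$, the case $h=0$, $j=4$ gives the Hermitian alternative $y^3-y=x^4$ of the one-point theorem. It must be excluded for $S=C_3$, where \eqref{eq:Rbound} gives $|G|\le96$. The noncyclic branch is only ``expected''. To run the rationality lemma you need $|G_P|>12$, which holds because $s=9$, $|G_P|=9$ would give $|G|=90$.

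Your CM argument for the $p$-rank of the Klein quartic is a cleaner alternative to Hasse--Witt if used in both directions: for inert $p$ the factor $E$ is supersingular, so $\gamma=0$. However, the ``weighted permutation matrix of nilpotent shape'' is inaccurate: for $p\equiv3,5,6\pmod7$ the Hasse--Witt matrix is zero. Finally, the equality $G=\Aut(\cX)$ in (iv) still requires the completed wild classification to rule out extra wild automorphisms.
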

\begin{proof}
\emph{Hyperelliptic curves.}
Every genus-$2$ curve, and any hyperelliptic genus-$3$ curve, has a
central hyperelliptic involution.  Its quotient is a line with a
branch set of size $2g+2$.  The image $\bar G$ in $\PGL_2(K)$
preserves this set, and $|G|\le2|\bar G|$.
For a $p$-element of $\bar G$, its unique fixed point on the line
must be in the branch set: otherwise its lift of order $p$ would fix
two points, contradicting zero $p$-rank.
If the reduced group is wild and fixes a point globally, that
point is a branch point by the same argument, and its unique
preimage is fixed by $G$. This is excluded by hypothesis.

The classification of finite subgroups of $\PGL_2(K)$ now applies.
A tame reduced group is cyclic, dihedral, $A_4$, $S_4$, or $A_5$.
Preserving six points bounds the full order by $48$ (the tame $A_5$
has no orbit of size at most six); preserving eight points gives the
sufficient bound $120$.  Both contradict the respective strict
thresholds.  A wild reduced group without a common fixed point is
$\PSL(2,q)$ or $\PGL(2,q)$ in its natural action, or the
cross-characteristic $A_5$ in characteristic $3$.  The latter has ten
Sylow fixed points and cannot preserve the branch set.  In the linear
cases the branch set contains $q+1$ Sylow fixed points.  For $q=3$
all other nontrivial orbits have at least six points; for $q=5$ they
have at least twenty.  Thus six branch points force $q=5$, and eight
force $q=7$.  The branch set is exactly $\PP^1(\F_q)$.
The two equations and the precise subgroup lists follow from
\eqref{eq:lineargroup} and its derived group, with $(q,d)=(5,2)$ or
$(7,2)$.

\emph{Tame nonhyperelliptic genus three.}
Here $G$ is tame and $|G|>144$.
Tame Riemann--Hurwitz leaves only signature $(2,3,7)$ and $|G|=168$:
for a rational quotient the next smallest positive normalized
signature is $(2,3,8)$, giving order $96$; four or more branch points
also give a smaller bound.  A positive-genus quotient is impossible
at this size.  The genus-three Hurwitz action is the Klein quartic
with group $\PSL(2,7)$; see \cite{Elkies}.  This tame characterization
is valid in characteristic prime to $168$: the tame cover \cite{SGA1} with its
three marked branch points lifts to characteristic zero, and the
unique genus-three Hurwitz curve specializes back uniquely by the
uniqueness of the smooth stable model.  Thus here $p\ne3,7$.
The zero-$p$-rank condition is checked below.

\emph{Wild nonhyperelliptic genus three.}
We first recover the two-orbit conclusion without using the Hurwitz
threshold.  If the quotient had positive genus, the unique wild
orbit, of length at least $s+1$, would contribute at least
$(s+1)(2s-2)>4=2g-2$, a contradiction.
With a rational quotient, three or more tame short orbits give
normalized different at least $1/2$.  Two tame orbits, one with
stabilizer at least $3$, give a value greater than $1/6$.
If both stabilizers have order $2$, then
$4=n(T-1)$; hence $n=4$, $s=3$, $T=2$, and $|G|\le24$.
A single wild orbit gives $|G|<8(g-1)^2=32$ by the earlier proof.
Therefore the action has precisely one wild and one tame short orbit.

If $S$ is noncyclic, Theorem~\ref{thm:GMPprelim} still applies, with $n=s+1$.
Also $|G_P|>12$: otherwise $s=9$, $|G_P|=9$ and $|G|=90$.
The proof of rationality of $\cX/S$ and the central-kernel and genus
formulas therefore remain valid.  The linear formula cannot give
$g=3$ for $q\ge9$. The non-Singer unitary branch has
$g\ge(q-1)(q^3+1)/(2\delta)>3$, so it is excluded without the
large-genus inequality used earlier. In the Singer unitary formula the minimum is
$q(q-1)/2$, so only $q=3$, $r=t=1$ occurs; it gives $\PGU(3,3)$
and $H_3$ by the same rigidity argument (its bounds have $t=1$).
For a Ree row, since $e\mid n$, the genus formula would imply
$q-1\mid n+4=q^3+5$, hence $q-1\mid6$, impossible for $q\ge27$.
The small Ree quotient is excluded by its negative descended genus
as before.

If $S$ is cyclic, the exclusion of higher cyclic groups is unchanged.
For $S=C_p$, the genus formula
$3=ph+(p-1)(j-1)/2$ gives, when $h>0$, only
$p=3,h=1,j=1$.  Here $m\mid2$, and
\[
 |G|=\frac{12me}{e-3m}.
\]
For $m=1$ this is at most $48$.  For $m=2$, the strict inequality
$|G|>144$ forces $e=7$ and $|G|=168$.
The same abelian-quotient argument as in Lemma~\ref{lem:A7} makes
$G$ perfect: a tame abelian quotient would have degree dividing both
$2$ and $7$.  A nonabelian simple quotient of this perfect group has order
dividing $168$. The only possibility is $\PSL(2,7)$, already of
order $168$, so $G\cong\PSL(2,7)$.
For $h=0$, the Stichtenoth--Lehr--Matignon alternatives give either
the already listed hyperelliptic $(p,d)=(7,2)$ or the Hermitian
$(p,d)=(3,4)$.  In the latter case, under the supposition that the
Sylow group of $G$ has order $3$, the same bound
\eqref{eq:Rbound} gives $|G|\le96$, a contradiction.

We identify the characteristic-$3$ group of order $168$ without
assuming its curve is Hermitian.  Let $U=7{:}3\le\PSL(2,7)$ have
index $8$.  On $G/U$, wild inertia $S_3$ has orbits $2,6$, and tame
inertia $C_7$ has orbits $1,7$.
For the $U$-cover, the wild contribution is $7(2+2)=28$ and the tame
one is $3(7-1)=18$.  Hence
\[
 4=21(2g(\cX/U)-2)+46,
\]
so $\cX/U$ is rational.  Normalize the degree-$8$ intermediate map
to have zeros $1,a$ of multiplicities $6,2$ and poles $0,\infty$ of
multiplicities $1,7$.  It is
\[
 f(z)=\frac{(z-1)^6(z-a)^2}{z}.
\]
In characteristic $3$ its derivative is
$(z-1)^6(z-a)(z+a)/z^2$.
There must be no further branch point, so $a=-1$.
Its core-free Galois closure is unique.  The reduction of the Klein
quartic action \cite[pp.~79--80]{Elkies} realizes this cover, so the curve is the Klein
quartic in characteristic $3$.  Its nonsingular Frobenius-form
equation is equivalent over $K$ to the Hermitian form of degree $4$
\cite[pp.~79--80]{Elkies};
thus it is $H_3$.

\emph{The $p$-rank of the Klein quartic.}
For $p\ne2,7$, use the plane-quartic Hasse--Witt coefficient formula
with interior vectors
$v_1=(2,1,1)$, $v_2=(1,2,1)$, $v_3=(1,1,2)$.
The $(i,j)$ entry is the coefficient of the monomial with exponent
$p v_j-v_i$ in $(X^3Y+Y^3Z+Z^3X)^{p-1}$.
A coefficient is nonzero precisely when
\[
 \begin{pmatrix}3&0&1\\1&3&0\\0&1&3\end{pmatrix}^{-1}
 (pv_j-v_i)
\]
is a nonnegative integer vector.  Its entries sum to $p-1$, so any
such multinomial coefficient is nonzero modulo $p$.
Checking the six residues modulo $7$ gives a permutation matrix
with nonzero entries for $p\equiv1,2,4\pmod7$, and the zero matrix
for $p\equiv3,5,6\pmod7$.  Thus the first cases are ordinary and
the latter have $p$-rank zero.  This proves the stated residue condition,
including the already separated characteristic $3$.

Finally, the preceding wild classification excludes additional wild
automorphisms of the Klein quartic in the remaining characteristics;
the tame bound and its order-$168$ subgroup exclude a larger tame
full group.  All groups in the list act without a common fixed point
and satisfy the numerical threshold, completing the proof.
\end{proof}

\section{A new characterization of the generalized Suzuki curve}
\label{sec:numerical}

We first prove a numerical criterion for a group to fix a point.
It is independent of Theorem~\ref{thm:geometric}; in particular,
zero $p$-rank is a conclusion rather than a hypothesis.

\begin{theorem}[Numerical fixed-point criterion]\label{thm:fixed}
Let $S\in\Syl_p(G)$ have order $s>1$.  If
\begin{equation}\label{eq:criterion}
 s>\frac{p}{p-2}(g-1),
 \qquad
 |G|<s(s+1),
\end{equation}
then $\gamma(\cX)=0$, $S\triangleleft G$, and $G$ has a unique global
fixed point on $\cX$.
\end{theorem}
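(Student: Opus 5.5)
The plan has three steps, and all three rest on results already proved.

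First, zero $p$-rank. Lemma~\ref{lem:positive} says that if $\gamma(\cX)>0$, then every nontrivial $p$-group of order $s$ acting on $\cX$ satisfies $s\le\frac{p}{p-2}(g-1)$. The first inequality in \eqref{eq:criterion} is exactly the negation of this, so $\gamma(\cX)=0$. The hypothesis $g\ge2$ is part of the standing assumptions, and it is used inside Lemma~\ref{lem:positive} in the unramified elliptic-quotient case.

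Second, the fixed point and normality. With $\gamma(\cX)=0$, Lemma~\ref{lem:sylow} applies to $G$ and $S$. The fixed points of the Sylow $p$-subgroups form a single $G$-orbit $\Omega$ of length $n=[G:N_G(S)]\equiv1\pmod s$. If $n>1$, then by \eqref{eq:nlower} $n\ge s+1$ and $|G|\ge s(s+1)$, which contradicts the second inequality of \eqref{eq:criterion}. Hence $n=1$. Since $G_P=N_G(S)$, this gives $N_G(S)=G$, so $S\triangleleft G$. The unique Sylow fixed point $P$ is then fixed by all of $G$: the orbit of $P$ is $\Omega=\{P\}$, or equivalently $G$ permutes the fixed points of the normal subgroup $S$, and $S$ has only one.

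Third, uniqueness. Any point fixed by all of $G$ is in particular fixed by $S\ne1$. By Lemma~\ref{lem:sylow}, a nontrivial $p$-group fixes exactly one point, so this point must be $P$.

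I expect no real obstacle. The only point needing care is that Lemma~\ref{lem:sylow} is stated for groups $G$ with a nontrivial Sylow subgroup and uses no size hypothesis. It does not require $G$ to lack a fixed point in advance: the lower bound $n\ge s+1$ only uses $n>1$ and $n\equiv1\pmod s$. I would verify that no large-group assumption enters that bound, so the argument is purely numerical and does not depend on Theorem~\ref{thm:geometric}, as claimed.
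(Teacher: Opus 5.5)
Your proposal is correct and follows the paper's proof essentially step for step. You use Lemma~\ref{lem:positive} to get zero $p$-rank, then the counting in Lemma~\ref{lem:sylow} (where, as you rightly note, only $n>1$ is needed) to force $n=1$, hence $S\triangleleft G$ and a fixed point, and uniqueness comes from $S$ having a single fixed point.
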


\begin{proof}
Lemma~\ref{lem:positive} gives $\gamma(\cX)=0$.  If the Sylow orbit had
length $n>1$, Lemma~\ref{lem:sylow} would give
$|G|\ge s(s+1)$, contradicting \eqref{eq:criterion}.  Hence $n=1$,
so $S$ is normal.  Its unique fixed point is therefore fixed by all of
$G$, and uniqueness follows from the uniqueness for $S$.
\end{proof}

\subsection{The numerical parameters}
Let $p>2$, $t\ge1$, $q_0=p^t$, and $q=p^{2t-1}$. The numerical
parameters relevant here are
\begin{equation}\label{eq:suzukidata}
 g=\frac{q_0(q-1)}2,\qquad |G|=q^2(q-1).
\end{equation}

\begin{corollary}\label{cor:suzuki}
Let $\cX/K$ have genus $q_0(q-1)/2$, and let
$G\le\Aut(\cX)$ have order $q^2(q-1)$. Then $\gamma(\cX)=0$
and $G$ has a unique common fixed point.
If $\cX$ is defined over $\F_q$ and $G$ is stable under Frobenius,
this point is $\F_q$-rational. The latter condition holds, in
particular, for the full geometric automorphism group.
\end{corollary}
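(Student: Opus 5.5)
We plan to deduce Corollary~\ref{cor:suzuki} directly from Theorem~\ref{thm:fixed}, which is designed for exactly this purpose. The Sylow $p$-subgroup $S$ of $G$ has order $s=q^2$, since $|G|=q^2(q-1)$ and $p\nmid q-1$. We then only need to check the two inequalities in \eqref{eq:criterion}. The second is immediate: $|G|=q^2(q-1)<q^2(q^2+1)=s(s+1)$.

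For the first inequality we need
\[
 q^2>\frac{p}{p-2}\Bigl(\frac{q_0(q-1)}2-1\Bigr).
\]
Since $p\ge3$, we have $p/(p-2)\le3$. It therefore suffices to show $q^2>\tfrac32 q_0(q-1)$. Now $q_0=p^t$ and $q=p^{2t-1}$, so $q_0\le pq/q_0$; more simply, $q_0^2=pq$, whence $q_0=\sqrt{pq}\le q$ for $t\ge1$, with equality only when $t=1$, $q=p$, $q_0=p$. In general $\tfrac32q_0(q-1)<\tfrac32 q_0 q$, so it is enough that $q>\tfrac32 q_0$, i.e.\ $p^{t-1}>3/2$, which holds for $t\ge2$. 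The case $t=1$ has to be checked separately. There $g=p(p-1)/2$ and $s=p^2$, and
\[
 \frac{p}{p-2}\Bigl(\frac{p(p-1)}2-1\Bigr)=\frac{p(p+1)}{2}\cdot\frac{p-2}{p-2}=\frac{p(p+1)}2<p^2.
\]
This uses $p(p-1)/2-1=(p-2)(p+1)/2$. So \eqref{eq:criterion} holds in all cases, and Theorem~\ref{thm:fixed} gives $\gamma(\cX)=0$, $S\trianglelefteq G$, and a unique common fixed point $P$.

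For rationality, suppose $\cX$ is defined over $\F_q$ with Frobenius $\Phi$, and $G^\Phi=G$. Then for every $\sigma\in G$,
\[
 \sigma(\Phi(P))=\Phi(\sigma^{\Phi^{-1}}(P))=\Phi(P),
\]
because $\sigma^{\Phi^{-1}}\in G$ fixes $P$. Thus $\Phi(P)$ is also a common fixed point of $G$, and uniqueness forces $\Phi(P)=P$, so $P\in\cX(\F_q)$. Finally, the full geometric automorphism group is Frobenius-stable, since conjugating an automorphism by Frobenius yields an automorphism. This gives the last assertion.

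The only mildly delicate step is the numerical verification of the first inequality, especially the boundary case $t=1$, where the crude bound $p/(p-2)\le3$ is too weak and the exact factorization is needed. Everything else is a formal consequence of Theorem~\ref{thm:fixed} and uniqueness of the fixed point.
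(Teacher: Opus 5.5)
Your proposal is correct and follows the paper's proof: with $s=q^2$, you verify both inequalities in \eqref{eq:criterion}, apply Theorem~\ref{thm:fixed}, and then use uniqueness of the common fixed point to conclude that Frobenius fixes it. The only difference is in the bookkeeping of the first inequality: you split by $t$, using $p/(p-2)\le 3$ when $t\ge2$ and the exact identity $p(p-1)/2-1=(p-2)(p+1)/2$ when $t=1$, whereas the paper splits by $p$ ($p\ge5$, then $p=3$ with $t\ge2$, then $p=3$ with $t=1$).
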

\begin{proof}
A Sylow $p$-subgroup has order $s=q^2$, and
$|G|=s(q-1)<s(s+1)$. If $p\ge5$, then $q_0\le q$ and
\[
 \frac{p}{p-2}(g-1)
 <\frac{p}{2(p-2)}q^2<q^2.
\]
If $p=3$ and $t\ge2$, then $q_0\le q/3$ and
\[
 3(g-1)<\frac{q(q-1)}2<q^2.
\]
For $p=3,t=1$, one has $g=3$ and $3(g-1)=6<9=q^2$.
Theorem~\ref{thm:fixed} proves the first assertion.
If $G$ is Frobenius-stable, Frobenius permutes its common fixed
points. Since there is only one, that point is $\F_q$-rational.
\end{proof}

\subsection{The characterization}
With the notation of Section~\ref{sec:GSprelim}, the fixed-point
condition in Theorem~\ref{thm:GSprevious} can be omitted.

\begin{theorem}[Characterization without a fixed-point hypothesis]
\label{thm:suzuki-new}
Let $p>2$, $t\ge2$, $q_0=p^t$, and $q=p^{2t-1}$. Let
$\cX/\F_q$ be a smooth projective geometrically irreducible
curve of genus $q_0(q-1)/2$. Suppose that every geometric
automorphism of $\cX$ is defined over $\F_q$, and that
\[
 \Aut_{\overline{\F}_q}(\cX)\cong\Gamma(q,q_0),
\]
with the group structure of \eqref{eq:GSgroup}.
Then $\cX$ is birationally equivalent over $\F_q$ to the curve
\[
 y^q-y=x^{q_0}(x^q-x).
\]
Equivalently, it is $\F_q$-isomorphic to $\cX_{\mathrm{GS}}$.
Moreover, $\cX$ has zero $p$-rank, and its automorphism group
fixes a unique point of $\cX(\F_q)$.
\end{theorem}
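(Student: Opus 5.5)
The plan is to reduce Theorem~\ref{thm:suzuki-new} to Theorem~\ref{thm:GSprevious}, using Corollary~\ref{cor:suzuki} to supply the missing fixed-point hypothesis. Put $G=\Aut_{\overline{\F}_q}(\cX)$. By hypothesis $G\cong\Gamma(q,q_0)$, so $|G|=q^2(q-1)$, and the genus is $q_0(q-1)/2$. These are exactly the numerical parameters \eqref{eq:suzukidata}.

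First I apply Corollary~\ref{cor:suzuki} to $G$ as a subgroup of $\Aut(\cX\otimes K)$. It gives $\gamma(\cX)=0$ and a unique common fixed point $P$ of $G$ on $\cX\otimes\overline{\F}_q$. For $t\ge2$ the hypotheses of that corollary hold through the inequality check made in its proof. For $p\ge5$ this uses $q_0=p^t\le p^{2t-1}=q$. For $p=3$ it uses $q_0\le q/3$, which is valid because $t\ge2$.

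Next I show that $P$ is $\F_q$-rational. Every geometric automorphism is defined over $\F_q$. Hence the $q$-Frobenius $\Phi$ commutes with each $\sigma\in G$ as a morphism of $\cX\otimes\overline{\F}_q$, and in particular $G$ is Frobenius-stable. Then $\sigma(\Phi(P))=\Phi(\sigma(P))=\Phi(P)$ for all $\sigma$, so $\Phi(P)$ is also a common fixed point. Uniqueness forces $\Phi(P)=P$, that is, $P\in\cX(\F_q)$. This is the last clause of Corollary~\ref{cor:suzuki}, and here I only verify that its Frobenius-stability condition is met.

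With the fixed $\F_q$-rational point in hand, all hypotheses of Theorem~\ref{thm:GSprevious} hold: the genus, $\F_q$-rationality of automorphisms, the group structure \eqref{eq:GSgroup}, and a fixed point in $\cX(\F_q)$. That theorem gives the birational equivalence over $\F_q$ with $y^q-y=x^{q_0}(x^q-x)$. Since both curves are smooth projective, birational equivalence upgrades to an $\F_q$-isomorphism with $\cX_{\mathrm{GS}}$.

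The remaining assertions, zero $p$-rank and uniqueness of the fixed rational point, were already obtained in the first step. The only potential obstacle is bookkeeping. I must check that the inequality $s>\frac{p}{p-2}(g-1)$ in Theorem~\ref{thm:fixed} holds uniformly for all $p>2$ and $t\ge2$. I must also check that the convention of Theorem~\ref{thm:GSprevious}, namely that the geometric group is $\F_q$-rational, matches our hypothesis. Both are handled by the computations already given in the proof of Corollary~\ref{cor:suzuki}.
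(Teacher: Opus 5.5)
Your proposal is correct and follows the paper's proof step for step. Corollary~\ref{cor:suzuki} supplies zero $p$-rank and a unique common fixed point, Frobenius-stability of the full group makes that point $\F_q$-rational, and Theorem~\ref{thm:GSprevious} gives the model, upgraded to an isomorphism of smooth projective curves; your explicit argument $\sigma(\Phi(P))=\Phi(\sigma(P))$ just unpacks the corollary's last clause, which the paper cites directly.
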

\begin{proof}
Put $G=\Aut_{\overline{\F}_q}(\cX)$. Its order is $q^2(q-1)$.
Corollary~\ref{cor:suzuki} gives $\gamma(\cX)=0$ and a unique
common fixed point $P$. Since $G$ is Frobenius-stable,
$P\in\cX(\F_q)$. All the hypotheses of
Theorem~\ref{thm:GSprevious} are now satisfied, and that theorem
identifies the function field over $\F_q$ with the one in
\eqref{eq:GSmodel}. The smooth projective models are therefore
$\F_q$-isomorphic.
\end{proof}

\begin{remark}
The numerical assertion of Corollary~\ref{cor:suzuki} uses only
the genus and the order of $G$. The isomorphism assertion in
Theorem~\ref{thm:suzuki-new} also retains the specified group
structure and its field of definition. Thus the improvement
of \cite[Theorem~1.1]{Taf} is the removal of the fixed-point
hypothesis, not the removal of its remaining assumptions.
\end{remark}

\begin{remark}[The case $t=1$]\label{rem:t-one}
Corollary~\ref{cor:suzuki} remains valid for $t=1$.
The characterization above is stated for $t\ge2$ to distinguish the
geometric group from the rational group in this boundary case.
Indeed, if $q=q_0=p$, put $z=y-x^2$ in \eqref{eq:GSmodel}. Then
\[
 z^p-z=x^2-x^{p+1}.
\]
For each root $a$ of $a^{p^2}-2a^p+a=0$ and each solution of
$c^p-c=a^2-a^{p+1}$, the transformations
\[
 x\longmapsto x+a,\qquad
 z\longmapsto z+(a^p-2a)x+c
\]
are automorphisms over $K$. The first polynomial is separable of
degree $p^2$, and there are $p$ choices of $c$ for each $a$.
These transformations form a group of order $p^3$.
Hence the generalized Suzuki model cannot have full geometric
automorphism group of order $p^2(p-1)$ when $t=1$.
The group computed in \cite{BC} is defined there as the group of
$\F_q$-automorphisms; this distinction does not affect the numerical
fixed-point criterion.
\end{remark}

\section*{Statements and Declarations}
OpenAI's ChatGPT was used for language editing and to assist with
mathematical and bibliographic checks. The author is responsible
for the content of the manuscript.

\section*{Acknowledgments}
\textbf{Funding.}
The author was partially supported by CNPq grant no.~302774/2025-4,
FAPESP grant no.~2024/00923-6, and FAEPEX grant no.~3485/25.

\end{document}